\documentclass[11pt]{amsart}

\usepackage[margin=1in]{geometry}
\usepackage{amsmath,amssymb,amsthm}
\usepackage{enumitem}
\usepackage[hidelinks]{hyperref}

\usepackage{tikz}
\usepackage{tikz-cd}

\newcommand{\Z}{\mathbb{Z}}
\newcommand{\F}{\mathbb{F}}
\newcommand{\Q}{\mathbb{Q}}
\newcommand{\R}{\mathcal{R}}
\newcommand{\ab}{\mathrm{ab}}
\newcommand{\Cl}{\mathrm{Cl}}
\newcommand{\rk}{\mathrm{rk}}

\newcommand{\Hp}{\mathbf{H}_p}
\newcommand{\Gp}{\mathbf{G}_p}
\newcommand{\Ab}{\mathbf{Ab}}
\newcommand{\Abp}{\mathbf{Ab}_{p-1}}
\newcommand{\cO}{\mathcal{O}}
\newcommand{\Gal}{\mathrm{Gal}}
\newcommand{\Frob}{\mathrm{Frob}}
\newcommand{\fr}{\mathrm{fr}}
\newcommand{\pr}{\mathrm{pr}}
\newcommand{\PP}{\mathcal{P}}
\newcommand{\eps}{\epsilon}

\newtheorem{thm}{Theorem}[section]
\newtheorem{cor}[thm]{Corollary}
\newtheorem{lem}[thm]{Lemma}
\newtheorem{prop}[thm]{Proposition}
\newtheorem{defn}[thm]{Definition}
\newtheorem{rem}[thm]{Remark}
\newtheorem{ex}[thm]{Example}

\newtheorem*{thmA}{Theorem A}
\newtheorem*{thmB}{Theorem B}
\newtheorem*{corC}{Corollary C}
\newtheorem*{corD}{Corollary D}
\newtheorem*{propE}{Proposition E}

\begin{document}

\title[Pro-$\mathbf H_p$ density in the free group of rank two]
{Pro-$\mathbf H_p$ density in the free group of rank two: a Frobenian criterion on the mod-$p$ torus}

\author{Jianchun Wu}
\address{Department of Mathematics, Soochow University, Suzhou 215006, CHINA}
\email{wujianchun@suda.edu.cn}

\thanks{The author acknowledges partial support from the National Natural Science Foundation of China (Grant No. 12271385).}

\subjclass[2020]{Primary 20E05; Secondary 20E18, 14G05, 11R45}

\keywords{pro-$\mathbf V$ topology, $\mathbf H_p$-dense subgroup, free group, Laurent polynomial, mod-$p$ torus, Frobenian set}

\begin{abstract}
Let $F$ be the free group of rank two and, for a prime $p$, let
$\mathbf H_p=\mathbf G_p*\mathbf{Ab}_{p-1}$ be the pseudovariety of finite
groups having a normal $p$-subgroup with abelian quotient of exponent
dividing $p-1$. For
$H$ of rank two with $\ab_F(H)=F^{\ab}$ we determine the set
$\mathfrak D(H)$ of primes $p$ where $H$ is $\mathbf H_p$-dense in $F$, and we show that this prime set is
Frobenian in the sense of Serre: it is governed by a single Laurent
polynomial $g$ attached to $H$. We prove that $p\in\mathfrak D(H)$ if and only if $g$ has no zero on the
torus $(\F_p^\times)^2$, and hence that $\mathfrak D(H)$ possesses a
computable natural density $d(H)$.  Exactly one of three cases holds:
$\mathfrak D(H)$ is the set of all primes, it is finite, or it is neither
and $d(H)$ satisfies $\frac1{|G|}\le d(H)\le1-\frac1{|G|}$, where
$G$ is the finite Galois group attached to $g$.
\end{abstract}

\maketitle

\section{Introduction}

Let $F=\langle x_1,x_2\rangle$ be the free group of rank two.  A
pseudovariety $\mathbf V$ of finite groups gives $F$ its
pro-$\mathbf V$ topology, and a subgroup $H\le F$ is \emph{$\mathbf
V$-dense} in $F$ if and only if $HK=F$ for every normal subgroup
$K\trianglelefteq F$ with $F/K\in\mathbf V$; equivalently, $H$ maps
onto every quotient of $F$ in $\mathbf V$.  Determining denseness, and
more generally computing pro-$\mathbf V$ closures of finitely generated
subgroups, has been carried out for several pseudovarieties: for all
finite groups by M.~Hall \cite{Hall49,Hall50}, for finite $p$-groups by
Ribes and Zalesskii \cite{RZ94}, and  for others by Margolis, Sapir and Weil \cite{MSW01} (see also \cite{Weil98}).  These computations are organized around the
Stallings automaton of the subgroup \cite{Stallings,KM02}.

For the pseudovariety $\Gp$ of finite $p$-groups the answer is a condition of
linear algebra over the finite field $\F_p$: a subgroup $S$ of a free group $\Gamma$ is
$\Gp$-dense in $\Gamma$ if and only if
$\ab_\Gamma(S)+p\Gamma^{\ab}=\Gamma^{\ab}$
(Lemma~\ref{lem:pdense}).  Write
\[
\Hp=\Gp*\Abp
\]
for the pseudovariety of finite groups possessing a normal $p$-subgroup
whose quotient is abelian of exponent dividing $p-1$.  Every group in $\Hp$
is supersolvable \cite[Proposition~2.6]{AS}, and these pseudovarieties arise
in the analysis of supersolvable closures in free groups \cite{CW}.
Since $\Hp$ varies with $p$, a single subgroup can be $\Hp$-dense for some primes
and fail to be for others.  The set of primes where it succeeds, written
$\mathfrak D(H)$ (Definition~\ref{defn:D}), is the subject of this paper.
We determine it completely for an arbitrary rank two subgroup $H\le F$ with
$\ab_F(H)=F^{\ab}$, and the answer is that $\mathfrak D(H)$ is an arithmetic object:
the complement of a Frobenian set of primes in the sense of Serre
\cite[Subsection~3.3]{Serre}, governed by one plane curve attached to $H$.

The condition
$\ab_F(H)=F^{\ab}$ makes $H$ dense in every finite abelian
quotient of $F$, so in checking $\Hp$-denseness the abelian half is
automatic and the entire difficulty lies in the commutator direction. 

It is a standard fact that the abelianization $[F,F]^{\ab}$ of the commutator subgroup $[F,F]$ is a free module of rank one over the Laurent polynomial ring $\R=\Z[t_1^{\pm1},t_2^{\pm1}]$, generated by the class $\theta$ of the commutator $[x_1,x_2]$.  The $\R$-submodule of $[F,F]^{\ab}$ generated
by the image of $H\cap[F,F]$ has the form $I_H\cdot\theta$ for a unique
ideal $I_H\subseteq\R$; for $H$ free of rank two with $\ab_F(H)=F^{\ab}$
this ideal is principal, $I_H=(g)$
(Proposition~\ref{prop:principal}).  For each prime $p$, $g$ induces a
function on the torus $(\F_p^\times)^2$
(Subsection~\ref{sec:crt}), and the first main result says that this
function decides whether $p$ lies in $\mathfrak D(H)$.

\begin{thmA}[Theorem~\ref{thm:main}]
Let $H\le F$ be free of rank two with $\ab_F(H)=F^{\ab}$ and $I_H=(g)$.
Then
\[
\mathfrak D(H)=\PP\setminus\PP(g),\qquad
\PP(g)=\bigl\{p:\ g(\zeta)=0\ \text{for some }\zeta\in(\F_p^\times)^2\bigr\},
\]
where $\PP$ denotes the set of all primes.
\end{thmA}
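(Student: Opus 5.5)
We reduce $\Hp$-denseness to a statement about the metabelian quotient and then to the values of $g$ on the torus. Let $K_p\trianglelefteq F$ be the kernel of the map from $F$ to its largest quotient in $\Hp$. Write $A=F^{\ab}/(p-1)F^{\ab}\cong(\Z/(p-1))^2$ and $N\trianglelefteq F$ for the preimage of the subgroup $(p-1)F^{\ab}$. Then the maximal $\Hp$-quotient of $F$ is an extension of $A$ by the maximal pro-$p$ quotient of $N$. Since $\ab_F(H)=F^{\ab}$, $H$ surjects onto $A$. So $H$ is $\Hp$-dense in $F$ if and only if $H\cap N$ (more precisely, $H\cap N$ together with its conjugates by $F$, which is automatic because $HN=F$) is $\Gp$-dense in $N$, i.e. maps onto every $p$-group quotient of $N$ that is normal in $F$ with quotient in $\Hp$. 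By Lemma~\ref{lem:pdense} in its $F$-equivariant form, this reduces to surjectivity of
\[
H\cap N\;\longrightarrow\; N^{\ab}\otimes\F_p ,
\]
viewed as an $\F_p[A]$-module. Here $N$ is free and $\F_p[A]=\F_p[t_1,t_2]/(t_1^{p-1}-1,t_2^{p-1}-1)$. Since $p\nmid p-1$, this ring is semisimple and splits as $\prod_{\zeta\in(\F_p^\times)^2}\F_p$, with the projections given by evaluation at $\zeta$ (Subsection~\ref{sec:crt}).

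\textbf{Step 1.} Identify $N^{\ab}\otimes\F_p$. By the Gaschütz/Crowell exact sequence (relation module for $1\to N\to F\to A\to1$),
\[
0\to N^{\ab}\otimes\F_p\to \F_p[A]^2\to \F_p[A]\to\F_p\to0 .
\]
The isotypic components are then: for $\zeta\ne(1,1)$, $N^{\ab}\otimes\F_p$ has the $\zeta$-component of dimension $1$; the trivial component has dimension $2$. The trivial part is the image of $N$ in $F^{\ab}$, namely $(p-1)F^{\ab}\otimes\F_p$. It is hit by $H\cap N$ because $\ab_F(H)=F^{\ab}$ implies $\ab(H\cap N)\supseteq (p-1)F^{\ab}$. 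For the other components, the commutator $[F,F]\subseteq N$ maps to $N^{\ab}\otimes\F_p$, and its image in the $\zeta$-component is generated by the evaluation of $\theta$'s coefficient at $\zeta$. I will verify that the composite $[F,F]^{\ab}=\R\theta\to N^{\ab}\otimes\F_p\to(\zeta\text{-part})$ is $f\theta\mapsto f(\zeta)$ times a generator whenever $\zeta\neq(1,1)$. This follows from the Fox-calculus description of the sequence: $[x_1,x_2]$ has Fox derivatives $(1-t_2,\,t_1-1)$, which is nonzero at $\zeta$.

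\textbf{Step 2.} Compute the image of $H\cap N$. Modulo $p$-th powers and commutators in $N$ (which is what $N^{\ab}\otimes\F_p$ sees), the image of $H\cap N$ in the $\zeta$-components with $\zeta\ne(1,1)$ equals the image of $H\cap[F,F]$ plus contributions from elements $h\in H\cap N$ not in $[F,F]$. For such $h$, one subtracts a product of $(p-1)$-th powers of elements of $H$ lifting the generators, and checks that these products lie in the trivial component only. Consequently the $\zeta$-component of the image is $I_H(\zeta)=g(\zeta)\F_p$, using Proposition~\ref{prop:principal}.

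\textbf{Step 3.} Conclude, and treat $\zeta=(1,1)$ separately. Surjectivity holds if and only if $g(\zeta)\ne0$ for all $\zeta\neq(1,1)$. For $\zeta=(1,1)$, note that $g(1,1)=\pm1$: $\ab_F(H)=F^{\ab}$ with $H$ of rank two forces $H$ modulo $[[F,F],F]$-type terms to be a basis up to commutators, so the augmentation of $g$ is a unit. Hence $g(1,1)\ne0$ in every $\F_p$, and the condition becomes $g$ nonvanishing on all of $(\F_p^\times)^2$, as claimed.

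The main obstacle is Step 2, especially the correction for elements of $H\cap N$ outside $[F,F]$. The check that the trivial character behaves consistently via $g(1,1)=\pm1$ belongs to the same difficulty. I would try to handle it by choosing free generators $h_1,h_2$ of $H$ with $h_i\equiv x_i$ mod $[F,F]$, so that $H\cap[F,F]$ is the normal closure in $H$ of $[h_1,h_2]$. Then $g$ is, up to a unit, the image of $[h_1,h_2]$, computed by Fox calculus.
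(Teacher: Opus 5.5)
Your reduction to surjectivity of $H\cap N\to N^{\ab}\otimes\F_p$ (Lemma~\ref{lem:decomp} with Lemma~\ref{lem:pdense}) is correct, and so is your Step~1. Step~1 reads off the isotypic structure of $M=N^{\ab}\otimes\F_p$ from the Crowell sequence and Fox derivatives. That is a legitimate alternative to the paper's Lemma~\ref{lem:VW}, which uses the relation $[x_1^{p-1},x_2^{p-1}]\in[N,N]$ and the Schreier rank of $N$.

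\textbf{The gap is in Step~2.} You claim the classes of $h_1^{a(p-1)}h_2^{b(p-1)}$ lie in the trivial component only; this is false. Write $e_\zeta$ for the $\zeta$-isotypic projection, $\epsilon$ for the image of $\theta$, and $\Sigma_1=\sum_{r=0}^{p-2}t_1^r$. Two facts hold:
\begin{itemize}
\item $h_1$ commutes with $h_1^{p-1}$;
\item $h_2^{-1}h_1^{p-1}h_2=h_1^{p-1}[h_1^{p-1},h_2]$, with $\overline{[h_1^{p-1},h_2]}=\Sigma_1\cdot\overline{[h_1,h_2]}=\Sigma_1 g\cdot\theta$.
\end{itemize}
Hence in $M$
\[
(t_1-1)\cdot\bar h_1^{\,p-1}=0,\qquad (t_2-1)\cdot\bar h_1^{\,p-1}=\Sigma_1 g\cdot\epsilon .
\]
At $\zeta=(1,\zeta_2)$ with $\zeta_2\ne1$ we have $\Sigma_1(\zeta)=-1$. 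So $(\zeta_2-1)\,e_\zeta\bar h_1^{\,p-1}=-g(\zeta)\,e_\zeta\epsilon$, which is nonzero whenever $g(\zeta)\ne0$, because your Step~1 gives $e_\zeta\epsilon\ne0$. Already for $H=F$, $g=1$, $p\ge3$, the class of $x_1^{p-1}$ has a nonzero component at every such $\zeta$. So your justification that ``the $\zeta$-component of the image is $g(\zeta)\F_p$'' fails. That step is exactly what the direction ``a zero $\zeta_0\neq(1,1)$ implies non-density'' needs.

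\textbf{The conclusion of Step~2 is still true, and the display repairs it.} It gives $e_\zeta\bar h_1^{\,p-1}=0$ for $\zeta_1\ne1$, and $e_\zeta\bar h_1^{\,p-1}=-(\zeta_2-1)^{-1}g(\zeta)\,e_\zeta\epsilon$ for $\zeta_1=1\ne\zeta_2$; the same holds symmetrically for $h_2$. So every nontrivial component of the powers lies in $g(\zeta)\F_p\,e_\zeta\epsilon$.

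A simpler fix avoids locating these classes at all. The image is $X=P+W$, with $P=\F_p\bar h_1^{\,p-1}+\F_p\bar h_2^{\,p-1}$ and $W$ the image of $C_H$. Then $\dim X\le 2+\#\{\zeta\ne(1,1):g(\zeta)\ne0\}$, while $\dim M=(p-1)^2+1$ by Step~1. A zero $\zeta_0\ne(1,1)$ of $g$ therefore forces $X\ne M$. This is in substance the paper's route: it proves $M=P\oplus V$ and $X=P\oplus W$ as $\F_p$-spaces (Lemma~\ref{lem:decompAb}) and concludes $X=M\iff W=V$, with no isotypic information about $P$.

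For the converse direction, state explicitly that $X$ is an $\F_p[A]$-submodule, because $H\cap N\trianglelefteq H$ and $H\to A$ is onto. Only for a submodule does hitting every isotypic component give $X=M$.
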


The converse of Theorem~A is the following.

\begin{propE}[Proposition~\ref{prop:realize}, Corollary~\ref{cor:which-sets}]
For every $g\in\R$ with $g(1,1)=1$, there exists an $H\le F$,
free of rank two with $\ab_F(H)=F^{\ab}$, such that $I_H=(g)$.  Hence
the sets of the form $\mathfrak D(H)$ are precisely the sets
$\PP\setminus\PP(g)$ with $g\in\R$, $g(1,1)=1$.
\end{propE}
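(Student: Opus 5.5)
The plan is to realize $g$ by perturbing the basis $x_1,x_2$ by commutator elements. Since $g(1,1)=1$, the element $g-1$ lies in the augmentation ideal $(t_1-1,t_2-1)$ of $\R$. So we can write
\[
g=1+(t_1-1)\alpha-(t_2-1)\beta,\qquad \alpha,\beta\in\R .
\]
Because $[F,F]^{\ab}=\R\theta$ is free of rank one, there are $u,v\in[F,F]$ whose classes are $\alpha\theta$ and $\beta\theta$. Put $a=x_1v$, $b=x_2u$ and $H=\langle a,b\rangle$.

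\emph{Basic properties of $H$.} Since $a\equiv x_1$ and $b\equiv x_2$ modulo $[F,F]$, we get $\ab_F(H)=F^{\ab}\cong\Z^2$. The group $H$ is free, being a subgroup of a free group. It is generated by two elements and surjects onto $\Z^2$, so its rank is exactly two. Moreover $H^{\ab}\cong\Z^2$ maps isomorphically onto $F^{\ab}$, hence $H\cap[F,F]=[H,H]$. This is the normal closure in $H$ of $[a,b]$.

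\emph{Identifying $I_H$.} The image of $[H,H]$ in $[F,F]^{\ab}$ is spanned by the conjugates $h[a,b]h^{-1}$ with $h\in H$. Their classes are $\ab_F(h)\cdot c\,\theta$, where $c\theta$ is the class of $[a,b]$. Since $\ab_F(H)=F^{\ab}$ gives all monomials, the $\R$-submodule generated is $\R c\,\theta$, so $I_H=(c)$. This is consistent with Proposition~\ref{prop:principal}.

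\emph{Computing $c$.} Expand $[x_1v,x_2u]$ with the commutator identities $[xy,z]=x[y,z]x^{-1}[x,z]$ and $[x,yz]=[x,y]\,y[x,z]y^{-1}$, using the paper's convention for $[\cdot,\cdot]$ and for the action of $t_i$ (conjugation by $x_i$). The terms are as follows.
\begin{itemize}
\item $[x_1,x_2]$ contributes $\theta$.
\item $[x_1,u]=x_1ux_1^{-1}u^{-1}$ contributes $(t_1-1)\alpha\theta$.
\item $[v,x_2]$ contributes $-(t_2-1)\beta\theta$.
\item $[v,u]$, a commutator of two elements of $[F,F]$, contributes $0$.
\end{itemize}
The extra conjugating factors are harmless, since $\R$-multiplication by a monomial can be absorbed into $\alpha$ or $\beta$. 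Concretely, one computes $c=1+(t_1-1)\alpha'-(t_2-1)\beta'$ with $\alpha'=m_1\alpha$ and $\beta'=m_2\beta$ for explicit monomials $m_i$. We then choose $u,v$ with classes $m_1^{-1}\alpha\theta$ and $m_2^{-1}\beta\theta$ instead, which gives $c=g$ exactly. If conventions produce a sign or a global unit instead, that is also harmless, since $I_H$ only depends on $c$ up to units of $\R$.

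This bookkeeping of conjugation conventions is the only delicate step. The cleanest way to carry it out is via Fox derivatives: the class of $w\in[F,F]$ is determined by $\partial w/\partial x_1$ reduced to $\R$, and linearity of the Fox derivative makes the contributions of $u$ and $v$ additive.

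\emph{The corollary.} One direction follows from the construction and Theorem~A: $\mathfrak D(H)=\PP\setminus\PP(g)$ for the $H$ just built. For the other direction, any $H$ as in Theorem~A has $I_H=(g)$ with $g$ equal to $c$ computed as above, up to a unit. Hence $g(1,1)=\pm1$, and we normalize $g$ by a sign so that $g(1,1)=1$. Multiplying $g$ by a unit $\pm t_1^{i}t_2^{j}$ does not change its zero set on $(\F_p^\times)^2$, so $\PP(g)$ is unaffected.
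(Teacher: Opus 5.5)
Your proposal is correct and is essentially the paper's proof: write $g-1$ in the augmentation ideal, set $h_1=x_1v$, $h_2=x_2u$ with $u,v\in[F,F]$ realizing the coefficients, get $I_H=(g)$ from the class of $[h_1,h_2]$ as in Propositions~\ref{prop:principal} and~\ref{prop:realize}, and then combine with Theorem~\ref{thm:main}. One bookkeeping remark: with the paper's conventions ($[y_1,y_2]=y_1^{-1}y_2^{-1}y_1y_2$, and $t_i$ acting by $w\mapsto x_i^{-1}wx_i$) the class of $[x_1v,x_2u]$ is exactly $\bigl(1+(1-t_1)\alpha+(t_2-1)\beta\bigr)\theta$, whereas your bullets use the other convention $[x,y]=xyx^{-1}y^{-1}$, so you actually need $u,v$ with classes $-\alpha\theta,-\beta\theta$; this sign flip on the individual terms is covered by your re-choice of $u,v$, not by the remark that $I_H$ depends on $c$ only up to units.
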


The second main result determines $\PP(g)$ for a fixed $g$.  This is
pure number theory, and the answer is that $\PP(g)$ is Frobenian in the sense of Serre
\cite{Serre}.

\begin{thmB}[Theorem~\ref{thm:torus}]
Let $g\in\R$ with $g(1,1)=1$.  One can compute a
finite Galois extension $L/\Q$, a conjugacy-stable subset $A_g$ of
$G=\Gal(L/\Q)$, and a finite explicitly enumerated set $\mathcal S(g)$ of primes, such
that every $p\notin \mathcal S(g)$ is unramified in $L/\Q$ and satisfies
\[
p\in\PP(g)\iff \Frob_{\mathfrak p}\in A_g
\quad\text{for some, equivalently every, }\mathfrak p\mid p .
\]
\end{thmB}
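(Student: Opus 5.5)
Theorem B asks us to decide, for almost all $p$, whether the curve $C_g=\{g=0\}\subset\mathbb G_m^2$ has an $\F_p$-point. The plan is to split $g$ into its absolutely irreducible pieces. The pieces that are absolutely irreducible over $\Q$ are handled by Lang--Weil: they have $\F_p$-points for all large $p$. The remaining pieces are governed by a Galois action on components, which produces the Frobenian condition.

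First clear the Laurent denominators, so that $g$ is a genuine polynomial in $\Z[t_1,t_2]$ not divisible by $t_1$ or $t_2$; its zeros in $(\F_p^\times)^2$ are unchanged. Factor $g$ over $\Q$ as $g=c\prod_i g_i^{e_i}$ with $g_i$ irreducible, and set aside constant and monomial factors. Since $g(1,1)=1$, no $g_i$ is a constant, and $c$ is a unit up to primes dividing it. For each nonconstant $g_i$, compute the field of constants $k_i$: the algebraic closure of $\Q$ in the function field $\Q(C_{g_i})$. Over $\overline{\Q}$, $g_i$ splits into $[k_i:\Q]$ absolutely irreducible factors $h_{i,\sigma}$, conjugate under $\Gal(\overline\Q/\Q)$ and defined over $k_i$. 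This step is computable by absolute factorization.

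Take $L$ to be the Galois closure of the compositum of all the $k_i$, and set $G=\Gal(L/\Q)$. Then $G$ acts on the finite set $X_i=\mathrm{Hom}(k_i,L)$, which indexes the geometric components of $C_{g_i}$. Define
\[A_g=\{\sigma\in G:\ \sigma \text{ fixes some point of some } X_i\}.\]
This set is conjugacy-stable. The arithmetic content is as follows. Take $p$ outside a finite set $\mathcal S(g)$ consisting of the primes dividing $c$, the primes ramified in $L$, and the primes where the reduction of some $h_{i,\sigma}$ fails to be absolutely irreducible or drops degree. For such $p$, an $\F_p$-rational geometric component of $C_g\bmod p$ exists exactly when $\Frob_{\mathfrak p}$ fixes a point of some $X_i$. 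A rational component is absolutely irreducible over $\F_p$, so by Lang--Weil, in the effective form of Cafure--Matera, it has at least $p+1-(d-1)(d-2)\sqrt p-O(d^2)$ points. Here we must also subtract the points on the axes $t_1t_2=0$, at most $2d$ of them, and the points at which this component meets other components, at most $d^2$ of them. For $p$ beyond an explicit bound in terms of $d=\deg g$, this count is positive, so such $p$ lie in $\PP(g)$. We add to $\mathcal S(g)$ the finitely many small primes not covered by this bound.

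Conversely, if no component is rational over $\F_p$, every $\F_p$-point of $C_g$ lies in the intersection of a component with its Frobenius conjugate. That intersection is a finite set of points; the danger is that some of them are $\F_p$-rational. This is the main obstacle: we must exclude $\F_p$-points on intersections of conjugate components. We plan to handle it with the singular locus. Let $Z$ be the zero-dimensional scheme defined by the pairwise intersections $h_{i,\sigma}=h_{j,\tau}=0$ inside $\mathbb G_m^2$, which is defined over $\Q$. Enlarge $L$ by the Galois closure of the residue fields of the points of $Z$, so that $G$ also acts on $Z(\overline\Q)$. Then redefine $A_g$ so that it also contains the $\sigma$ fixing a point of $Z(\overline\Q)$. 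Finally, enlarge $\mathcal S(g)$ by the primes where $Z$ has bad reduction, meaning the primes where points collide, escape to the axes, or where the intersection multiplicities change. Computing these primes explicitly is the delicate part; it can be done using resultants and discriminants of the $h$'s.

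With this setup, for $p\notin\mathcal S(g)$, $C_g(\F_p)\cap(\F_p^\times)^2$ is nonempty if and only if either a component or a point of $Z$ is Frobenius-fixed. That is exactly the condition $\Frob_{\mathfrak p}\in A_g$. Independence of the choice of $\mathfrak p\mid p$ follows from the conjugacy-stability of $A_g$.
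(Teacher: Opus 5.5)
Your architecture is the paper's. The absolute components and the finite set $\Omega$ of torus points lying on two components are both $G$-sets, and $A_g$ is the set of $\sigma$ with a fixed point in one of them. A Weil bound gives $\F_p$-points on Frobenius-stable components, and Frobenius-fixed points of $\Omega$ account for the rest; indexing components by $\mathrm{Hom}(k_i,L)$ is equivalent to the paper's monic absolute factors.

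\textbf{The missing condition.} One step fails as stated. You claim that, for $p$ outside the primes dividing $c$, the ramified primes, and the primes where some $h_{i,\sigma}$ loses absolute irreducibility or degree, an $\F_p$-rational reduced component exists exactly when $\Frob_{\mathfrak p}$ fixes an index. This also requires that distinct absolute components stay distinct modulo $\mathfrak p$. Your own subtraction of ``at most $d^2$'' intersection points already presupposes it.

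For a counterexample, take a prime $p\equiv3\pmod 8$ beyond your Lang--Weil threshold for degree $4$, and let $g=(t_1t_2-2)^2-2p^2(t_1+t_2-2)^2$. Then:
\begin{itemize}
\item $g(1,1)=1$, and the components are $h_\pm=t_1t_2-2\pm p\sqrt2\,(t_1+t_2-2)$.
\item $\Omega=\{(1+i,1-i),(1-i,1+i)\}$ and $L=\Q(\zeta_8)$.
\item With $\sigma_a(\zeta_8)=\zeta_8^a$, the elements fixing a component are $\sigma_1,\sigma_7$ and those fixing a point of $\Omega$ are $\sigma_1,\sigma_5$, so $A_g=\{\sigma_1,\sigma_5,\sigma_7\}$.
\end{itemize}
Here $p$ is unramified, $c=1$, and $\bar h_+=\bar h_-=t_1t_2-2$ is absolutely irreducible of degree $2$. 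Yet $\Frob_{\mathfrak p}=\sigma_3\notin A_g$, while $\bar g=(t_1t_2-2)^2$ vanishes at every $(a,2a^{-1})$ with $a\in\F_p^\times$.

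The paper excludes such primes through $\mathcal A_2$, which makes a nonzero coefficient of each $h_i-h_j$ a unit. In this example $\mathcal A_1$ already excludes $p$, since it forces each $\bar h_i$ to keep the support of $h_i$. This yields ``pairwise nonassociate'' in Lemma~\ref{lem:model}\ref{mod:fact}. That is what keeps $\bar Z_i\cap\bar Z_j$ finite in Lemma~\ref{lem:model}\ref{mod:bound} and the decomposition \eqref{ap:decomp-bar} valid.

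\textbf{No new intersection points.} You also need every torus point of $\bar Z_i\cap\bar Z_j$ to equal $\bar z$ for some $z\in\Omega_{ij}$. Collisions, escape to the axes, and changes of multiplicity only describe what happens to the existing points, so this part is left open. The paper obtains it from explicit certificates for $\mathcal J_{ij}^{\mu_{ij}}\subseteq\mathfrak L_{ij}\subseteq\mathcal J_{ij}$ (Lemma~\ref{lem:radical}) whose cofactor coefficients are inverted ($\mathcal A_5$). The chain therefore survives reduction and forces $Z(\bar{\mathfrak L}_{ij})$ to be exactly the lifted reductions of $\Omega_{ij}$.

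An alternative route also works. Once the reductions are distinct, absolutely irreducible and of unchanged degree, the projective intersection over the discrete valuation ring $R_{\mathfrak p}$ is a finite flat complete intersection. Hence every torus point of the special fibre is a specialization of a point of the generic fibre. That point must itself lie in the torus, because the complement of the torus is closed.

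\textbf{Computability.} Theorem~B asserts that $\mathcal S(g)$ is explicitly computable, so the primes where absolute irreducibility fails must be made explicit as well. The paper does this with a nonvanishing maximal minor of Ruppert's matrix (Appendix~\ref{app:ruppert}).
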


Theorem~B and the Chebotarev density
theorem give a rigid trichotomy.

\begin{corC}[Theorem~\ref{thm:torus}, Corollary~\ref{cor:trichotomy}]
$\mathfrak D(H)$ is Frobenian and possesses a natural density
$d(H)\in\Q\cap[0,1]$, computable from $H$; and exactly one of the following
holds:
\begin{enumerate}[label=(\roman*)]
\item $\mathfrak D(H)=\PP$, so $d(H)=1$;
\item $\mathfrak D(H)$ is finite, so $d(H)=0$;
\item $\mathfrak D(H)$ is neither finite nor cofinite, and
$\frac1{|G|}\le d(H)\le1-\frac1{|G|}$.
\end{enumerate}
\end{corC}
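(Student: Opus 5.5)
Corollary~C follows from Theorems~A and~B together with the Chebotarev density theorem. By Theorem~A, $\mathfrak D(H)=\PP\setminus\PP(g)$. By Theorem~B, there is a finite exceptional set $\mathcal S(g)$ outside of which $p\in\PP(g)$ exactly when $\Frob_{\mathfrak p}\in A_g$, where $A_g\subseteq G=\Gal(L/\Q)$ is conjugacy-stable. It follows that $\mathfrak D(H)$ differs by a finite set from
\[
\{p\ \text{unramified in } L:\ \Frob_{\mathfrak p}\in G\setminus A_g\}.
\]
Hence $\mathfrak D(H)$ is Frobenian. Chebotarev gives natural density $d(H)=|G\setminus A_g|/|G|$, which is rational. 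It is computable because $L$, $A_g$ and $\mathcal S(g)$ are computable by Theorem~B and $H\mapsto g$ is computable: the ideal $I_H$ is read off from a basis of $H$ via the Fox calculus description underlying Proposition~\ref{prop:principal}.

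For the trichotomy, split according to $A_g$. If $A_g=\emptyset$, then $\PP(g)\subseteq\mathcal S(g)$ is finite. If $A_g\neq\emptyset$, each conjugacy class in $A_g$ has density at least $1/|G|$, so $\PP(g)$ is infinite. Symmetrically, $\mathfrak D(H)$ is infinite if and only if $A_g\neq G$, and in that case $d(H)\ge 1/|G|$. So if $\emptyset\neq A_g\neq G$, both $\mathfrak D(H)$ and its complement are infinite, and $\frac1{|G|}\le d(H)\le 1-\frac1{|G|}$. This is case~(iii).

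When $A_g=G$, $\mathfrak D(H)$ is finite, which is case~(ii).

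When $A_g=\emptyset$, $\mathfrak D(H)$ is cofinite, and we must show it is actually all of $\PP$. This is the main obstacle: the statement asserts $\mathfrak D(H)=\PP$, not merely that $\mathfrak D(H)$ is cofinite. I would argue that if $g$ has no zero on $(\F_p^\times)^2$ for all large $p$, then $g$ is a unit of $\R$, i.e.\ $g=\pm t_1^at_2^b$, and then $g(1,1)=1$ forces $g=t_1^at_2^b$, which has no zeros on any torus, so $\PP(g)=\emptyset$.

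To prove that non-units have zeros for infinitely many $p$, take a non-unit $g$. Suppose first that $g$ is nonconstant in some variable, say $t_2$ after possibly swapping. Then the curve $g=0$ has a component over $\bar\Q$ that is not contained in the coordinate axes, so it meets the torus $\mathbb G_m^2$. That component is defined over a number field, and for primes splitting completely in that field, reduction gives an $\F_p$-point of the torus. A simple way to see this: pick $c\in\Z$ so that $g(c,t_2)$ is a nonmonomial polynomial in $t_2$. It has a root in $\bar\Q^\times$, hence a root modulo $p$ in $\F_p^\times$ for infinitely many $p$. Those primes lie in $\PP(g)$, so $A_g\neq\emptyset$.

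If instead $g$ is a nonunit constant $n$, then the normalization $g(1,1)=1$ forces $n=1$, a unit, a contradiction. Since normalizations are only up to units, the alternatives have to be checked carefully. Thus in case~(i) we get $\PP(g)=\emptyset$ exactly, and $\mathfrak D(H)=\PP$. Exactly one case holds because the three cases are distinguished by whether $A_g$ is empty, all of $G$, or neither.
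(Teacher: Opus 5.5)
Your proof is correct. Outside case (i) it is the paper's argument: Chebotarev applied to Theorem~B gives $d(H)=1-|A_g|/|G|$, and the cases are sorted by whether $A_g$ is empty, all of $G$, or neither.

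You differ at the step you call the main obstacle. The paper disposes of it structurally. If $D=\deg g\ge1$, then $\mathcal I\neq\emptyset$, and the identity of $G$ fixes every $i\in\mathcal I$, so $1\in A_g$ automatically. Hence $A_g=\emptyset$ happens only for $g=1$, where $\PP(g)=\emptyset$ trivially. Also $|A_g|\ge1$ gives $d(H)\le 1-\frac1{|G|}$ with no further work.

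You instead treat Theorem~B as a black box and show directly that a non-unit $g$ has torus zeros modulo infinitely many primes. You do this by specializing $t_1=c$ and invoking the classical fact (Schur) that a nonconstant integer polynomial has roots modulo infinitely many primes. This is sound, provided you make three points explicit:
\begin{enumerate}
\item take $c\neq0$;
\item strip the power of $t_2$ from $g(c,t_2)$ to get $q\in\Z[t_2]$ with $q(0)\neq0$, so that roots modulo $p\nmid c\,q(0)$ lie in $\F_p^\times$;
\item dismiss the monomial case $n\,t_1^at_2^b$ using $g(1,1)=1$. For the normalized $g$ of \eqref{eq:norm-g}, a nonconstant $g$ is never a monomial anyway.
\end{enumerate}

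Your route has the merit of not depending on how $A_g$ is constructed. Given the explicit definition of $A_g$, however, it is redundant: the observation $1\in A_g$ already removes the obstacle.

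Using Fox calculus instead of Reidemeister--Schreier rewriting to compute $g$ is a cosmetic difference.
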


\begin{corD}[Corollary~\ref{cor:decide}]
The trichotomy of Corollary~C is computable: there is an algorithm
which, given two words in $x_1^{\pm1},x_2^{\pm1}$ forming a basis of
such an $H$, computes $\mathfrak D(H)$, decides which of the cases
(i)--(iii) holds, lists $\mathfrak D(H)$ when it is finite, and
computes $d(H)$ as an explicit rational number.
\end{corD}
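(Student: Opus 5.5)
The plan is to chain the earlier results into an explicit pipeline, checking at each stage that the object produced is computable from the input words $u,v$.

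\emph{Step 1: compute $g$ from $u,v$.} I would first check that $\ab_F(H)=F^{\ab}$, i.e.\ that the $2\times2$ exponent-sum matrix of $u,v$ has determinant $\pm1$; this is part of the hypothesis but is itself decidable. Next I would form the commutator $[u,v]\in H\cap[F,F]$ and compute its class in $[F,F]^{\ab}=\R\theta$ by Fox calculus. For $w\in[F,F]$, the Fox derivative $\partial w/\partial x_1$, read in $\R$, equals $(t_2-1)c$ up to the standard normalisation, where $w\mapsto c\,\theta$, so $c$ is obtained by one exact division in $\R$. I would then invoke the construction in the proof of Proposition~\ref{prop:principal} to get $g$ explicitly. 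I expect that proof to show that the class of $[u,v]$ is a unit multiple $\pm t_1^at_2^b\,g$ of the generator, because $u,v$ generate $F^{\ab}$. If instead the proof produces $g$ as a gcd of finitely many explicit elements of $I_H$, that is still computable, since $\R$ is a UFD with effective gcd. In either case we may normalise $g$ so that $g(1,1)=1$.

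\emph{Step 2: apply Theorem~B.} By Theorem~\ref{thm:torus}, from $g$ we can compute $L$, $G=\Gal(L/\Q)$, the conjugacy-stable set $A_g\subseteq G$, and the finite set $\mathcal S(g)$. For each of the finitely many $p\in\mathcal S(g)$, I would decide $p\in\PP(g)$ by brute force, evaluating the reduction of $g$ at all $(p-1)^2$ points of $(\F_p^\times)^2$. By Theorem~A this determines $\mathfrak D(H)\cap\mathcal S(g)$. For $p\notin\mathcal S(g)$, membership is governed by $\Frob_{\mathfrak p}\in A_g$.

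\emph{Step 3: decide the case and compute $d(H)$.} Chebotarev gives $d(H)=1-|A_g|/|G|$, an explicit rational number. If $A_g=G$, then every prime outside $\mathcal S(g)$ lies in $\PP(g)$, so $\mathfrak D(H)\subseteq\mathcal S(g)$. Step~2 lists $\mathfrak D(H)$ exactly, and we are in case~(ii). If $A_g=\emptyset$, then $\PP(g)\subseteq\mathcal S(g)$ has already been computed. Corollary~\ref{cor:trichotomy} excludes cofinite proper sets, so this forces $\PP(g)=\emptyset$, which is case~(i); the finite check confirms it independently. If $\emptyset\neq A_g\neq G$, Chebotarev gives infinitely many primes on each side, so we are in case~(iii). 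Finally, $\mathfrak D(H)$ is ``computed'' in the sense of a membership procedure: use the finite table for $p\in\mathcal S(g)$, and otherwise either compute $\Frob_{\mathfrak p}$ or simply search the torus mod $p$.

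\emph{Main obstacle.} The step I expect to be delicate is Step~1: making the generator $g$ of $I_H$ genuinely explicit rather than merely known to exist. The fallback is the gcd route sketched there. The effectivity in Step~2 is assumed to be part of Theorem~B as stated.
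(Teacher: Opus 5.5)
Your proposal is correct and follows the paper's proof. Like the paper, you read off $g$ from the class of $[w_1,w_2]$ in $[F,F]^{\ab}$; Remark~\ref{rem:g-depends-on-basis} confirms this class generates $I_H$ for any basis, so the gcd fallback is unnecessary. The paper uses Reidemeister--Schreier rewriting where you use Fox calculus; the two agree up to a unit and the involution $t_i\mapsto t_i^{-1}$, neither of which changes $\PP(g)$. As in the paper, you then brute-force the primes of $\mathcal S(g)$ and read the case and $d(H)=1-|A_g|/|G|$ off $A_g$. The only cosmetic difference is that the paper detects case (i) by $D=\deg g=0$, which is equivalent to your test $A_g=\emptyset$ because the identity lies in $A_g$ whenever $D\ge1$.
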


The paper is organized as follows.  Section~\ref{sec:prelim} fixes
notation and introduces the technical ingredients used throughout: the
characteristic subgroups $N_k$, the structure of $[F,F]^{\ab}$ as an
$\R$-module, the ring $\R_{p,p-1}$, and the pro-$\mathbf V$ topologies.
Section~\ref{sec:density} constructs the invariant $I_H$ and
proves Theorem~A and Proposition~E.
Section~\ref{sec:dec} proves Theorem~B and deduces
Corollaries~C and~D.  Appendix~\ref{app:ruppert} records Ruppert's
criterion under reduction, and Appendix~\ref{app:radical} the radical
computation used for the certificates.

\section{Preliminaries}
\label{sec:prelim}

\subsection{The characteristic subgroups $N_k$}
For a group $G$, let $\rk(G)$ denote its rank and $G^{\ab}=G/[G,G]$ its abelianization,
with quotient map $\ab_G:G\to G^{\ab}$. We write
$F^{\ab}=\Z^2$ with standard basis $e_1=\ab_F(x_1),e_2=\ab_F(x_2)$.
Throughout we adopt the standard conventions
\[
v^y = y^{-1}vy ,\qquad
[y_1,y_2] = y_1^{-1}y_2^{-1}y_1y_2 .
\]

For a positive integer $k$ let
\begin{equation}
\label{eq:Nk}
N_k=\ker\Bigl(F\xrightarrow{\ \ab_F\ }\Z^2\xrightarrow{\bmod k}\Z_k^2\Bigr)
=[F,F]F^k,\qquad F^k=\langle y^k:y\in F\rangle.
\end{equation}
Then $N_k$ is a characteristic subgroup of $F$ of index $k^2$,
and $[F,F]\subseteq N_k$.

For a subgroup $H\le F$ we write
\[
A_H=\ab_F(H)\subseteq F^{\ab},\qquad C_H=H\cap[F,F].
\]

\begin{lem}
\label{lem:hcapnk}
Let $H\le F$  and let $b_1,\dots,b_m$ be a
$\Z$-basis of $A_H\cap kF^{\ab}$. If $h_j\in H$ satisfies
$\ab_F(h_j)=b_j$, then
\[
H\cap N_k=\bigl\langle h_1,\dots,h_m,\;C_H\bigr\rangle.
\]
\end{lem}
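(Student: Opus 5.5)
We prove the two inclusions separately, using only that $N_k$ is the preimage under $\ab_F$ of $kF^{\ab}$ and that $[F,F]=\ker\ab_F$.

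For the inclusion $\supseteq$, each $h_j$ lies in $H$. Moreover $\ab_F(h_j)=b_j\in kF^{\ab}$, so $h_j\in N_k$ by \eqref{eq:Nk}. Also $C_H=H\cap[F,F]\subseteq H\cap N_k$, because $[F,F]\subseteq N_k$. Since $H\cap N_k$ is a subgroup, it contains $\langle h_1,\dots,h_m,C_H\rangle$.

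For the inclusion $\subseteq$, take $w\in H\cap N_k$. Then $\ab_F(w)\in A_H$, and $\ab_F(w)\in kF^{\ab}$ because $w\in N_k$. So $\ab_F(w)\in A_H\cap kF^{\ab}$. This is a subgroup of the free abelian group $F^{\ab}\cong\Z^2$, hence free with the given basis, and we can write $\ab_F(w)=\sum_j n_j b_j$ with $n_j\in\Z$.

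Put $u=h_1^{n_1}\cdots h_m^{n_m}$. Since $\ab_F$ is a homomorphism, $\ab_F(u)=\sum_j n_jb_j=\ab_F(w)$. Hence $u^{-1}w\in\ker\ab_F=[F,F]$. Also $u^{-1}w\in H$, because both $u$ and $w$ lie in $H$. Therefore $c:=u^{-1}w\in C_H$, and $w=uc\in\langle h_1,\dots,h_m,C_H\rangle$.

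There is no real obstacle; the only point to keep straight is that $N_k$ is exactly the preimage of $kF^{\ab}$. This makes membership of $h_j$ in $N_k$ immediate. It also ensures that correcting $w$ by a word in the $h_j$ lands in $H\cap[F,F]$ rather than merely in $[F,F]$.
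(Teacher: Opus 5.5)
Your proof is correct and follows the paper's argument essentially verbatim: both inclusions are proved the same way. For the reverse inclusion, you write $\ab_F(w)=\sum_j n_jb_j$ and observe that $w$, corrected by the word $h_1^{n_1}\cdots h_m^{n_m}$, lands in $H\cap[F,F]=C_H$; the only cosmetic difference is that you correct on the left by $u^{-1}$, whereas the paper multiplies on the right by $\prod_j h_j^{-n_j}$.
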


\begin{proof}
Each $h_j$ lies in $N_k$, because $\ab_F(h_j)=b_j\in kF^{\ab}$, and
$H\cap[F,F]\subseteq N_k$ by \eqref{eq:Nk}; hence all generators of the
right-hand side lie in $H\cap N_k$. Conversely, let
$h\in H\cap N_k$. Then $\ab_F(h)\in A_H\cap kF^{\ab}$, so for suitable
$n_1,\dots,n_m\in\Z$ we have $\ab_F(h)=\sum_j n_j b_j$ and
$\ab_F\bigl(h\,\prod_j h_j^{-n_j}\bigr)=0$; since both factors lie in $H$, this
element lies in $H\cap[F,F]=C_H$. As $\prod_j h_j^{-n_j}\in\langle h_1,\dots,h_m\rangle$,
we get $h\in\langle h_1,\dots,h_m,C_H\rangle$.
\end{proof}

\begin{lem}
\label{lem:normalized-basis}
Let $H\le F$ have rank two with $A_H=F^{\ab}$.  Then $C_H=[H,H]$, and $H$ has a free basis
$h_1,h_2$ with $\ab_F(h_i)=e_i$ such that for every positive integer $k$
\[
N_{k}=\bigl\langle h_1^{k},\,h_2^{k},\,[F,F]\bigr\rangle,
\qquad
H\cap N_{k}=\bigl\langle h_1^{k},\,h_2^{k},\,C_H\bigr\rangle.
\]
\end{lem}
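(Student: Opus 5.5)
We first produce the basis. Since $H$ has rank two, $\ab_F$ restricts to a surjection $H\to F^{\ab}=\Z^2$, which factors through $H^{\ab}\cong\Z^2$. A surjective homomorphism $\Z^2\to\Z^2$ is an isomorphism (a surjective endomorphism of a finitely generated free abelian module, or simply: its determinant is $\pm1$). Hence $\ker(\ab_F|_H)=[H,H]$, that is, $C_H=H\cap[F,F]=[H,H]$. Next pick any free basis $u_1,u_2$ of $H$. The matrix of $\ab_F$ in the bases $\ab_H(u_1),\ab_H(u_2)$ and $e_1,e_2$ lies in $GL_2(\Z)$. Since $GL_2(\Z)$ is generated by elementary matrices, and every elementary matrix is realized by a Nielsen transformation of $(u_1,u_2)$, we can apply the corresponding sequence of Nielsen moves, namely $u_i\mapsto u_i^{-1}$, swaps, and $u_i\mapsto u_iu_j^{\pm1}$. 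This yields a free basis $h_1,h_2$ of $H$ with $\ab_F(h_i)=e_i$. This Nielsen step is the only point needing care, but it is standard. Alternatively, we can take any preimages $h_i\in H$ of $e_i$ and argue that they generate $H$: their images generate $H^{\ab}$, and in a free group of rank two, two elements generating the abelianization form a basis, by Nielsen's theorem that $\mathrm{Aut}(F_2)\to GL_2(\Z)$ is surjective with inner kernel. The first route is more self-contained, so we use it.

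For the second identity we apply Lemma~\ref{lem:hcapnk} with $A_H=F^{\ab}$. Then $A_H\cap kF^{\ab}=kF^{\ab}$ has $\Z$-basis $ke_1,ke_2$, and $h_1^k,h_2^k\in H$ have these images. The lemma therefore gives $H\cap N_k=\langle h_1^k,h_2^k,C_H\rangle$.

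For the first identity we apply Lemma~\ref{lem:hcapnk} to the subgroup $F$ itself, with $C_F=[F,F]$ and $A_F=F^{\ab}$. Using the same basis $ke_1,ke_2$ and the elements $h_1^k,h_2^k\in F$, it gives $N_k=F\cap N_k=\langle h_1^k,h_2^k,[F,F]\rangle$. Equivalently, one sees directly that $N_k$ is the full preimage of $k\Z^2$ and that $h_1^k,h_2^k$ map onto generators of $k\Z^2$. No step is expected to be a real obstacle beyond justifying the normalized basis in the first paragraph.
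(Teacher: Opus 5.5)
Your proof is correct and is essentially the paper's proof. The same factorization of $\ab_F|_H$ through $H^{\ab}$ gives $C_H=[H,H]$. Your Nielsen moves are the standard proof of the surjectivity of $\mathrm{Aut}(H)\to\mathrm{GL}_2(\Z)$, which the paper simply cites. Both identities then follow from Lemma~\ref{lem:hcapnk}; the paper checks the first one directly, exactly as in your closing remark.

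One correction concerns the aside you set aside. It is false that two elements of a free group of rank two whose images generate the abelianization must form a basis. Nielsen's theorem only says that \emph{some} lift of a basis of $H^{\ab}$ is a basis, not every lift. For instance, $x_1$ and $x_2[x_2,x_1]^{x_1}$ map to $e_1,e_2$. Yet by the Example after Corollary~\ref{cor:which-sets}, they generate a proper subgroup of $F$, since its ideal $I_H=(t_1^2-t_1+1)$ is not all of $\R$. Such subgroups are precisely what the paper studies, so you should drop that remark rather than offer it as an alternative.
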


\begin{proof}
As $A_H$ is abelian, the restriction $\ab_F|_H: H \to A_H$ factors through $\ab_H: H\to H^{\ab}$: there is a surjection $\phi:H^{\ab}\to A_H$ with $\ab_F|_H=\phi\circ \ab_H$. Hence
$$\ker \phi=\ab_H(\ker \ab_F|_H)=\ab_H(C_H)=C_H/[H,H].$$
Both $H^{\ab}$ and $A_H$ are free abelian of rank two, so this surjection is an isomorphism and $\ker\phi=0$, i.e.\ $C_H=[H,H]$.

Since the natural map $\mathrm{Aut}(H)\to\mathrm{Aut}(H^{\ab})\cong\mathrm{GL}_2(\Z)$ is surjective, the $\Z$-basis $\{\phi^{-1}(e_1),\phi^{-1}(e_2)\}$ of $H^{\ab}$ is induced by a free basis $\{h_1,h_2\}$ of $H$ with $\ab_H(h_i)=\phi^{-1}(e_i)$; then $\ab_F(h_i)=\phi\circ\ab_H(h_i)=e_i$.

Since $N_{k}/[F,F]=kF^{\ab}$ is freely generated by
$ke_1,ke_2$ and $\ab_F(h_i^{k})=ke_i$, the subgroup
$\langle h_1^{k},h_2^{k}\rangle$ maps onto
$kF^{\ab}$; as $[F,F]$ is the kernel of $F\twoheadrightarrow F^{\ab}$, we
get $N_{k}=\langle h_1^{k},h_2^{k},[F,F]\rangle$.  Applying
Lemma~\ref{lem:hcapnk} to the basis $\{b_1,b_2\}=\{ke_1,ke_2\}$ of
$A_H\cap kF^{\ab}=k\Z^2$ with the lifts $h_1^{k},h_2^{k}$
 gives the second identity.
\end{proof}

\subsection{The \texorpdfstring{$\R$}{R}-module \texorpdfstring{$[F,F]^{\ab}$}{[F,F]ab}}
\label{sec:tomaszewski}
For a subgroup $K\trianglelefteq F$ such that $[F,F]\subseteq K$, the conjugation action of $F$ on $K$ induces an action on its
abelianization $K^{\ab}$. For $y\in F$ and $v\in  K$ the action is given by
\[
y\cdot\ab_{K}(v) = \ab_{K}(y^{-1}vy).
\]
If $y\in[F,F]$, then $y^{-1}vy = v[v,y]$, so $y\cdot\ab_{K}(v)=\ab_{K}(v)$ since $[v,y]\in [K,K]$;
hence the action induces an action of $F^{\ab}$ on $K^{\ab}$, making
$K^{\ab}$ a module over the integral group ring $\Z[F^{\ab}]$.

Let $\R$ denote the integral Laurent polynomial ring $\Z[t_1^{\pm1},t_2^{\pm1}]$.  
$K^{\ab}$  can be viewed as  an $\R$-module as follows: for a fixed basis, for instance,  $\{e_1,e_2\}$ of $F^{\ab}$,
the map 
$$\kappa_F: F^{\ab}\to \R, \qquad a_1 e_1+a_2 e_2\mapsto t_1^{a_1}t_2^{a_2}$$ can be extended linearly to a ring isomorphism
$\kappa_F:\Z[F^{\ab}]\cong \R$. The action of $\R$
on $K^{\ab}$ is defined via $\kappa_F$.
Explicitly,  for a monomial $t_1^{a_1}t_2^{a_2}\in \R$, let $y\in F$ with $\kappa_F(\ab_F(y))=t_1^{a_1}t_2^{a_2}$, so that
\[
t_1^{a_1}t_2^{a_2}\cdot\ab_{K}(v) =\ab_F(y)\cdot \ab_{K}(v)=
 \ab_{K}(y^{-1}vy).
\]

Throughout we write
$\overline{v}= \ab_{[F,F]}(v)$ for $v\in [F,F]$.

\begin{lem}[{\cite[p.~2]{Put}}]
\label{lem:rankone}
$[F,F]^{\ab}$ is a free $\R$-module of rank one, generated by
$\theta=\overline{[x_1,x_2]}$.
\end{lem}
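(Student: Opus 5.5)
We identify $[F,F]^{\ab}$ with the first homology of the Cayley graph $\Gamma$ of $F^{\ab}=\Z^2$ with respect to $\{e_1,e_2\}$. The commutator subgroup $[F,F]$ is the fundamental group of the covering space of the wedge of two circles corresponding to $F\to\Z^2$. That covering space is exactly $\Gamma$: the square grid, with vertices $\Z^2$ and edges from $v$ to $v+e_i$. Hence $[F,F]^{\ab}\cong H_1(\Gamma;\Z)$, and the deck action of $\Z^2$ on $\Gamma$ corresponds to the conjugation action described above. We should check that the convention $v^y=y^{-1}vy$ matches translation by $\ab_F(y)$, possibly up to the inverse; this only changes the identification by the automorphism $t_i\mapsto t_i^{-1}$ of $\R$, which is harmless for freeness.

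The key steps are as follows.

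\textbf{Step 1: unit squares.} The loop $[x_1,x_2]=x_1^{-1}x_2^{-1}x_1x_2$, read from a base vertex, traces the boundary of a unit square of the grid. Its translates $t_1^{a}t_2^{b}\theta$ are the boundaries of all the unit squares.

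\textbf{Step 2: generation.} Since $\Gamma$ is the $1$-skeleton of the standard square tiling of the plane, which is contractible, we have $H_1(\Gamma)=Z_1=\partial C_2$. Here $C_2$ is the free abelian group on the $2$-cells, a free $\Z[\Z^2]$-module of rank one generated by one square. So the unit-square boundaries generate $H_1(\Gamma)$, and $\theta$ generates $[F,F]^{\ab}$ as an $\R$-module.

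\textbf{Step 3: freeness.} We need $\partial:C_2\to C_1$ to be injective. This is equivalent to $H_2$ of the plane vanishing, since there are no $3$-cells. Alternatively, one can argue directly: in a finite nonzero $2$-chain, pick the square with maximal first coordinate and then maximal second coordinate. Its right-hand edge is not shared with any other square in the support, so it appears in the boundary with nonzero coefficient.

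Thus $r\mapsto r\theta$ is an isomorphism $\R\cong[F,F]^{\ab}$.

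The main obstacle is Step 2, specifically justifying that the whole cycle group is generated by the square boundaries. The direct route is to show that every cycle, after subtracting suitable combinations of squares, can be pushed onto a tree such as the "comb" consisting of the $x_1$-axis together with all vertical lines. A cycle supported on a tree is zero, which gives the claim. We would first try to cite the contractibility of $\R^2$ with its cellular chain complex. Failing that, we would carry out the comb-reduction argument on the finitely many horizontal edges off the axis, which is elementary.
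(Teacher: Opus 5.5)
Your proof is correct and complete. Identify $[F,F]^{\ab}$ with $H_1$ of the Cayley graph of $\Z^2$, which is the $1$-skeleton of the unit-square tiling of $\mathbb{R}^2$. Then the vanishing of $H_1(\mathbb{R}^2)$ and $H_2(\mathbb{R}^2)$ says exactly that $\partial_2$ carries $C_2\cong\R$ isomorphically onto the cycle group. Moreover, $\theta$ is the boundary of the single square with corners $(0,0),(-1,0),(-1,-1),(0,-1)$, and the possible twist by $t_i\mapsto t_i^{-1}$ does not affect freeness, as you say.

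The paper gives no proof of its own and only cites the reference; your covering-space argument is the standard proof of this fact. Appealing to the cellular homology of the contractible plane already settles what you call the main obstacle, so the comb reduction is only an optional elementary substitute. Also, in Step~3, maximality of the first coordinate alone suffices.
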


\subsection{The ring \texorpdfstring{$\R_{p,p-1}$}{Rppminus1}}
\label{sec:crt}

For a prime $p$, write $\F_p$ for the field with $p$ elements and $\F_p^\times$ for its multiplicative group.  Let
\[
\R_{p-1}=\Z[t_1^{\pm1},t_2^{\pm1}]\big/(t_1^{p-1}-1,\;t_2^{p-1}-1),\qquad
\R_{p,p-1}=\R_{p-1}/p\R_{p-1}.
\]

An element of $\R$ is a finite sum
$\sum_{a,b\in\Z}c_{a,b}t_1^a t_2^b$ with $c_{a,b}\in\Z$.  In $\R_{p-1}$ the
relations $t_i^{p-1}-1=0$ give $t_i^a=t_i^{\,a\bmod(p-1)}$; in particular
$t_i^{-1}=t_i^{\,p-2}$, so $t_i$ is already invertible in $\R_{p-1}$, hence
\[
\R_{p-1}=\Z[t_1,t_2]\big/(t_1^{p-1}-1,\;t_2^{p-1}-1),
\]
a free $\Z$-module on the $(p-1)^2$ monomials $t_1^at_2^b$ with
$0\le a,b\le p-2$.  Reducing the coefficients modulo $p$, we have
\begin{equation}
\label{eq:Rbasis}
\R_{p,p-1}=\F_p[t_1,t_2]\big/(t_1^{p-1}-1,\;t_2^{p-1}-1)=\bigoplus_{a,b=0}^{p-2}\F_p\,t_1^at_2^b ,
\qquad \dim_{\F_p}\R_{p,p-1}=(p-1)^2 .
\end{equation}

\begin{lem}
\label{lem:crt}
The evaluation map
\[
\mathrm{ev}:\R_{p,p-1}\;\longrightarrow\;\prod_{\zeta\in(\F_p^\times)^2}\F_p,
\qquad f\mapsto\bigl(f(\zeta)\bigr)_\zeta ,
\]
is an isomorphism of $\F_p$-algebras.
\end{lem}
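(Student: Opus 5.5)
The natural route is the Chinese Remainder Theorem, applied one variable at a time. Over $\F_p$ the polynomial $t^{p-1}-1$ factors as $\prod_{c\in\F_p^\times}(t-c)$, a product of $p-1$ distinct linear factors, because every element of $\F_p^\times$ is a root and the degree is $p-1$. So the first step is to record
\[
\F_p[t]/(t^{p-1}-1)\;\cong\;\prod_{c\in\F_p^\times}\F_p[t]/(t-c)\;\cong\;\F_p^{\,p-1},\qquad f\mapsto (f(c))_c .
\]
The ideals $(t-c)$ are pairwise comaximal, since $(t-c)-(t-c')=c'-c$ is a unit when $c\neq c'$, so CRT applies.

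Next I write $\R_{p,p-1}=\F_p[t_1]/(t_1^{p-1}-1)\otimes_{\F_p}\F_p[t_2]/(t_2^{p-1}-1)$, using the presentation \eqref{eq:Rbasis}. Tensoring the two one-variable isomorphisms gives
\[
\R_{p,p-1}\cong \F_p^{\,p-1}\otimes\F_p^{\,p-1}\cong \F_p^{(p-1)^2},
\]
indexed by pairs $\zeta=(c_1,c_2)\in(\F_p^\times)^2$. Tracing a monomial $t_1^at_2^b$ through these maps sends it to $(c_1^ac_2^b)_\zeta$, so the composite is exactly $\mathrm{ev}$.

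Alternatively, one can avoid the tensor bookkeeping and argue directly. First, $\mathrm{ev}$ is well defined, because each $\zeta\in(\F_p^\times)^2$ satisfies $\zeta_i^{p-1}=1$; it is also plainly an $\F_p$-algebra homomorphism. Both sides have $\F_p$-dimension $(p-1)^2$ by \eqref{eq:Rbasis}, so it suffices to prove surjectivity. For that, use the Lagrange-type idempotents
\[
\delta_c(t)=1-(t/c)^{p-1}\cdot 0+\dots,
\]
or more cleanly $\delta_c(t)=\prod_{c'\neq c}\frac{t-c'}{c-c'}$, which takes the value $1$ at $c$ and $0$ at every other point of $\F_p^\times$. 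The products $\delta_{c_1}(t_1)\delta_{c_2}(t_2)$ then map to the standard basis vectors of $\prod_\zeta\F_p$, which gives surjectivity.

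There is no real obstacle here. The only points that need care are, first, that $t^{p-1}-1$ splits with distinct roots over $\F_p$ (Fermat's little theorem together with the degree count), and second, that $\mathrm{ev}$ is well defined on the quotient ring. I would present the direct dimension-plus-idempotents argument.
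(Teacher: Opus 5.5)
Your direct argument (well-definedness, equal dimension $(p-1)^2$, surjectivity via product idempotents) is the paper's proof, and your Lagrange idempotents $\delta_{\zeta_1}(t_1)\delta_{\zeta_2}(t_2)$ are literally the paper's $e_\zeta=\prod_i\sum_{r=0}^{p-2}\zeta_i^{-r}t_i^{r}$, since each one-variable factor of $e_\zeta$ has degree at most $p-2$ and agrees with $-\delta_{\zeta_i}(t_i)$ on all of $\F_p^\times$. When writing it up, delete the garbled first attempt at a formula for $\delta_c$ and keep only the product formula.
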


\begin{proof}
Each component $\mathrm{ev}_\zeta:f\mapsto f(\zeta)$ is well defined: the ring
homomorphism $\F_p[t_1,t_2]\to\F_p$ sending $t_i\mapsto\zeta_i$ maps
$t_i^{p-1}-1$ to 0 because $\zeta_i^{p-1}=1$, so it factors through
$\R_{p,p-1}$.  Thus $\mathrm{ev}$ is a homomorphism of $\F_p$-algebras; in
particular $(fh)(\zeta)=f(\zeta)h(\zeta)$.

For $\gamma\in\F_p^\times$,  one has $\sum_{r=0}^{p-2}\gamma^{\,r}=p-1=-1$ if
$\gamma=1$, and $\sum_{r=0}^{p-2}\gamma^{\,r}=0$ otherwise, since
$(\gamma-1)\sum_{r=0}^{p-2}\gamma^{\,r}=\gamma^{p-1}-1=0$.  Given
$\zeta\in(\F_p^\times)^2$, let
\[
e_\zeta=\prod_{i=1,2}\ \sum_{r=0}^{p-2}\zeta_i^{-r}\,t_i^{\,r}
\ \in\ \R_{p,p-1} .
\]
Evaluating factor by factor at $\eta$ and setting $\gamma_i=\zeta_i^{-1}\eta_i$,
\[
e_\zeta(\eta)=\prod_{i=1,2}\ \sum_{r=0}^{p-2}\gamma_i^{\,r},
\]
The $i$th factor is $-1$ if $\eta_i=\zeta_i$ and $0$ otherwise; hence the product
is $(-1)^2=1$ if $\eta=\zeta$ and $0$ if $\eta\ne\zeta$.  So the image of
$\mathrm{ev}$ contains every standard basis vector of the product, and
$\mathrm{ev}$ is surjective.  Both sides have $\F_p$-dimension $(p-1)^2$ by
\eqref{eq:Rbasis}, so it is an isomorphism.
\end{proof}

For $i=1,2$ let $\Sigma_i=\sum_{r=0}^{p-2}t_i^{\,r}\in\R_{p,p-1}$; then
$\Sigma_1\Sigma_2$ is the element $e_{(1,1)}$ of the preceding proof, so
\begin{equation}
\label{eq:Sigma-idem}
(\Sigma_1\Sigma_2)(1,1)=1,\qquad
(\Sigma_1\Sigma_2)(\zeta)=0\quad\text{for }\zeta\ne(1,1);
\end{equation}
that is, $\mathrm{ev}$ carries $\Sigma_1\Sigma_2$ to the standard basis vector of
the product indexed by $(1,1)$.

\begin{cor}
\label{cor:JSigma}
The ideal $J=(\Sigma_1\Sigma_2)$ of $\R_{p,p-1}$ equals
$\F_p\cdot\Sigma_1\Sigma_2$, so $\dim_{\F_p}J=1$, and $\mathrm{ev}$ induces
an isomorphism
\[
\R_{p,p-1}/(\Sigma_1\Sigma_2)\;\cong
\prod_{\zeta\in(\F_p^\times)^2\setminus\{(1,1)\}}\F_p ,
\qquad
\dim_{\F_p}\R_{p,p-1}/J=(p-1)^2-1 .
\]
\end{cor}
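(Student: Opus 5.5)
The plan is to transport everything through the algebra isomorphism $\mathrm{ev}$ of Lemma~\ref{lem:crt}, where ideals are easy to describe. Under $\mathrm{ev}$, the ring $\R_{p,p-1}$ becomes the product $\prod_{\zeta}\F_p$ with componentwise operations, and by \eqref{eq:Sigma-idem} the element $\Sigma_1\Sigma_2$ goes to the standard basis vector $\delta_{(1,1)}$, which is $1$ at $(1,1)$ and $0$ elsewhere.

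First I will identify $J$. Any element of $J$ has the form $f\Sigma_1\Sigma_2$ with $f\in\R_{p,p-1}$. Since $\mathrm{ev}$ is multiplicative,
\[
\mathrm{ev}(f\Sigma_1\Sigma_2)=\mathrm{ev}(f)\cdot\delta_{(1,1)}=f(1,1)\,\delta_{(1,1)} .
\]
Because $\mathrm{ev}$ is injective, this gives $f\Sigma_1\Sigma_2=f(1,1)\,\Sigma_1\Sigma_2$. Hence $J\subseteq\F_p\cdot\Sigma_1\Sigma_2$. The reverse inclusion is clear, since scalars in $\F_p$ lie in $\R_{p,p-1}$. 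Finally $\Sigma_1\Sigma_2\neq0$, because its value at $(1,1)$ is $1$, so $\dim_{\F_p}J=1$.

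Next I will compute the quotient. Since $\mathrm{ev}$ is an isomorphism, it induces
\[
\R_{p,p-1}/J\;\cong\;\Bigl(\prod_{\zeta}\F_p\Bigr)\Big/\F_p\,\delta_{(1,1)} .
\]
The projection that forgets the $(1,1)$ coordinate is a surjective algebra map onto $\prod_{\zeta\neq(1,1)}\F_p$, and its kernel is exactly $\F_p\,\delta_{(1,1)}$. The isomorphism therefore follows, and the dimension $(p-1)^2-1$ follows from \eqref{eq:Rbasis}, or directly by counting coordinates.

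There is no real obstacle here. The only point needing care is that "ideal generated by" means $\R_{p,p-1}$-multiples, not just $\F_p$-multiples, and the evaluation identity above handles exactly that.
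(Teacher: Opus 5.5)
Your proposal is correct and follows essentially the same route as the paper: transport through the isomorphism $\mathrm{ev}$ of Lemma~\ref{lem:crt}, use \eqref{eq:Sigma-idem} to see that $\Sigma_1\Sigma_2$ maps to the $(1,1)$ standard basis vector, and read off both $J$ and the quotient coordinatewise. Your explicit identity $f\Sigma_1\Sigma_2=f(1,1)\,\Sigma_1\Sigma_2$ simply spells out the paper's step that $J$ is the preimage of the $(1,1)$ coordinate factor.
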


\begin{proof}
By \eqref{eq:Sigma-idem}, $\mathrm{ev}$ sends the generator $\Sigma_1\Sigma_2$
of $J$ to the standard basis vector indexed by $(1,1)$; as $\mathrm{ev}$ is an
isomorphism (Lemma~\ref{lem:crt}), $J$ is the preimage of the corresponding
coordinate factor $\F_p$, that is, $J=\F_p\cdot\Sigma_1\Sigma_2$ and
$\dim_{\F_p}J=1$.  Passing to the quotient, $\mathrm{ev}$ induces
$\R_{p,p-1}/J\cong\prod_{\zeta\ne(1,1)}\F_p$, of dimension $(p-1)^2-1$.
\end{proof}

In Section~\ref{sec:density} the above is applied to $g\in\R$.  The image
$\bar g\in\R_{p,p-1}$ is by Lemma~\ref{lem:crt} determined by its values.
Write
\[
g(\zeta)=\bar g(\zeta),\qquad \zeta\in\F_p^2 ,
\]
for the value of the reduction $\bar g$ at $\zeta$; saying $g$ has no zero
in $(\F_p^\times)^2$ means $g(\zeta)\ne0$ for every $\zeta\in(\F_p^\times)^2$.

\subsection{Pseudovarieties and the
\texorpdfstring{pro-$\mathbf V$}{pro-V} topology}

\begin{defn}
A class of finite groups is a \emph{pseudovariety} if it is closed
under taking subgroups, homomorphic images, and finite direct
products. The \emph{product} $\mathbf{U}*\mathbf{V}$ of two
pseudovarieties $\mathbf{U}$, $\mathbf{V}$ consists of all groups $G$
having a normal subgroup $N\in\mathbf{U}$ such that $G/N\in\mathbf{V}$.
\end{defn}
The pseudovarieties in this paper are:
$\mathbf{G}_p$ (finite $p$-groups),
$\mathbf{Ab}_d$ (finite abelian groups of exponent dividing $d$),
 and
$\mathbf{H}_p=\mathbf{G}_p*\mathbf{Ab}_{p-1}$.
%

For a pseudovariety $\mathbf V$, the pro-$\mathbf{V}$ topology on $F$
is the coarsest topology such that every homomorphism from $F$ to a
finite group (endowed with discrete topology) in $\mathbf{V}$ is
continuous. Let $\Cl_{\mathbf V}(S)$ denote the closure of
$S\subseteq F$ in the pro-$\mathbf V$ topology. For $S\leq \Gamma\leq F$, let
$\Cl_{\mathbf V}(S,\Gamma)$ denote the closure of $S$ inside $\Gamma$ with
respect to the pro-$\mathbf V$ topology on $\Gamma$ (it may differ from the
subspace topology). $S$ is $\mathbf V$-dense in $\Gamma$ if
$\Cl_{\mathbf V}(S,\Gamma)=\Gamma$.

By \cite[Corollary~3.2]{MSW01}, a finitely generated subgroup of a free
group is $\Gp$-dense in $\Gamma$ exactly when it maps onto the elementary
abelian quotient $\Gamma^{\ab}/p\Gamma^{\ab}$.  This is the following
linear criterion.

\begin{lem}
\label{lem:pdense}
Let $\Gamma$ be a free group of finite rank and let $S\le\Gamma$ be a
finitely generated subgroup.  Then
\[
\Cl_{\Gp}(S,\Gamma)=\Gamma\qquad\Longleftrightarrow\qquad
\ab_\Gamma(S)+p\Gamma^{\ab}=\Gamma^{\ab}.
\]
\end{lem}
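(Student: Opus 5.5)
The plan is to prove both directions by passing to the finite quotient $\Gamma/\Phi(\Gamma)$, where $\Phi(\Gamma)=[\Gamma,\Gamma]\Gamma^p$ is the kernel of $\Gamma\to\Gamma^{\ab}/p\Gamma^{\ab}$.

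For ($\Rightarrow$), suppose $\Cl_{\Gp}(S,\Gamma)=\Gamma$. The quotient $\Gamma/\Phi(\Gamma)\cong\Gamma^{\ab}/p\Gamma^{\ab}$ is a finite elementary abelian $p$-group, so it lies in $\Gp$. Hence $\Phi(\Gamma)$ is an open normal subgroup in the pro-$\Gp$ topology on $\Gamma$. The subgroup $S\Phi(\Gamma)$ is a union of cosets of $\Phi(\Gamma)$, so it is open, hence closed. It therefore contains $\Cl_{\Gp}(S,\Gamma)=\Gamma$. So $S\Phi(\Gamma)=\Gamma$. Applying $\ab_\Gamma$ and reducing mod $p$ gives $\ab_\Gamma(S)+p\Gamma^{\ab}=\Gamma^{\ab}$.

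For ($\Leftarrow$), I will show that $S$ maps onto every finite $p$-quotient $P=\Gamma/K$ with $K\trianglelefteq\Gamma$ and $P\in\Gp$.

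1. Write $\pi:\Gamma\to P$ for the quotient map. The map $\Gamma\to P\to P/\Phi(P)$ has elementary abelian target, so it factors through $\Gamma^{\ab}/p\Gamma^{\ab}$.
2. The hypothesis says $S$ surjects onto $\Gamma^{\ab}/p\Gamma^{\ab}$. Hence $\pi(S)\Phi(P)=P$.
3. For a finite $p$-group $P$, the Frattini subgroup $\Phi(P)$ consists of non-generators. Therefore $\pi(S)=P$, that is, $SK=\Gamma$.

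It remains to turn this into density. The sets $xK$ with $K$ ranging over normal subgroups of $p$-power index form a base of neighbourhoods of $x$. For any $x\in\Gamma$ and any such $K$, we have $x\in SK$, so $xK$ meets $S$. Hence every $x$ lies in the closure of $S$, and $\Cl_{\Gp}(S,\Gamma)=\Gamma$.

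Finite generation of $S$ is not needed for this argument; it only matches the form of \cite[Corollary~3.2]{MSW01}. The one nontrivial input is the Burnside basis theorem, that $\Phi(P)=[P,P]P^p$ for a finite $p$-group. I expect no serious obstacle beyond citing it.
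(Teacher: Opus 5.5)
Your proof is correct, but it takes a different route from the paper. The paper gives no argument of its own: it quotes \cite[Corollary~3.2]{MSW01}, where the criterion comes out of the Margolis--Sapir--Weil theory of pro-$\mathbf V$ closures of finitely generated subgroups of free groups (built on Stallings automata and the Ribes--Zalesskii theorem). It then rewrites ``$S$ maps onto $\Gamma^{\ab}/p\Gamma^{\ab}$'' as $\ab_\Gamma(S)+p\Gamma^{\ab}=\Gamma^{\ab}$. You instead prove the lemma from scratch. For the forward direction, the kernel of $\Gamma\to\Gamma^{\ab}/p\Gamma^{\ab}$ is open, so $S$ times that kernel is a closed subgroup containing $S$. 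For the reverse direction, the Burnside basis theorem applied in each finite $p$-quotient $P=\Gamma/K$ gives $SK=\Gamma$, and hence density. The citation places the lemma inside a framework that computes whole closures, not just density. Your argument is elementary and self-contained, and it shows the hypotheses are stronger than needed: $S$ need not be finitely generated, and freeness of $\Gamma$ is never used, only finiteness of $\Gamma^{\ab}/p\Gamma^{\ab}$ in the openness step. (In the paper's application, $S=H\cap N_{p-1}$ has finite index in $H$, so finite generation holds there anyway.) One cosmetic point: do not call $[\Gamma,\Gamma]\Gamma^p$ by the name $\Phi(\Gamma)$. It is not the Frattini subgroup of $\Gamma$ (already $\Phi(\Z)=0\neq p\Z$), and the symbol then clashes with your correct use of $\Phi(P)$ for the finite $p$-group $P$.
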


Lemma~\ref{lem:pdense} is only the denseness criterion; computing the whole
pro-$\mathbf{G}_p$ closure of a finitely generated subgroup of a free group is the
 deeper theorem of Ribes and Zalesskii \cite{RZ94}, see also
\cite{MSW01}.  As in the $\Gp$ case, $\Hp$-denseness is governed by an
analogous criterion.

\begin{lem}[{\cite[Lemma~3.11]{CW}}]
\label{lem:decomp}
Let $H\subseteq F$ be finitely generated. Then $H$ is $\mathbf{H}_p$-dense in $F$ if and only if
$H$ is $\Ab_{p-1}$-dense in $F$ and $H\cap N_{p-1}$ is $\Gp$-dense in $N_{p-1}$.
\end{lem}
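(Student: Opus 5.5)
The proof is a direct argument using the characterization of density recalled in the introduction: $S$ is $\mathbf V$-dense in a group $\Gamma$ if and only if $SK=\Gamma$ for every normal subgroup $K\trianglelefteq\Gamma$ with $\Gamma/K\in\mathbf V$. Write $N=N_{p-1}=[F,F]F^{p-1}$. This is the smallest normal subgroup of $F$ whose quotient is abelian of exponent dividing $p-1$. In particular $F/N\cong\Z_{p-1}^2\in\Abp$, and every $K\trianglelefteq F$ with $F/K\in\Abp$ contains $N$. I will prove the two implications separately.

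\emph{($\Rightarrow$).} Since $\Abp\subseteq\Hp$ (take the trivial $p$-subgroup), $\Hp$-density of $H$ gives $\Abp$-density at once. For the second condition, let $K\trianglelefteq N$ with $N/K$ a finite $p$-group.
\begin{enumerate}[label=(\arabic*)]
\item Replace $K$ by its normal core $K_0=\bigcap_{y\in F}y^{-1}Ky$ in $F$. Because $N$ has finite index $(p-1)^2$ in $F$ and is normal, $K_0$ is an intersection of finitely many $F$-conjugates of $K$. Each conjugate is normal in $N$ with $p$-group quotient. Hence $N/K_0$ embeds in a finite product of $p$-groups and is a $p$-group.
\item Now $K_0\trianglelefteq F$, the subgroup $N/K_0$ is a normal $p$-subgroup of $F/K_0$, and the quotient by it is $F/N\in\Abp$. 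So $F/K_0\in\Hp$, and density gives $HK_0=F$.
\item Take $n\in N$ and write $n=hk$ with $h\in H$, $k\in K_0\subseteq N$. Then $h=nk^{-1}\in H\cap N$, so $n\in(H\cap N)K_0\subseteq(H\cap N)K$.
\end{enumerate}
Thus $(H\cap N)K=N$, which is the required $\Gp$-density of $H\cap N$ in $N$.

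\emph{($\Leftarrow$).} Let $K\trianglelefteq F$ with $F/K\in\Hp$. Choose $P\trianglelefteq F$ containing $K$ such that $P/K$ is a $p$-group and $F/P\in\Abp$.
\begin{enumerate}[label=(\arabic*)]
\item By the minimality of $N$ noted above, $N\subseteq P$.
\item The subgroup $N\cap K$ is normal in $N$, and $N/(N\cap K)\cong NK/K\le P/K$ is a $p$-group. So $\Gp$-density of $H\cap N$ in $N$ gives $(H\cap N)(N\cap K)=N$.
\item $\Abp$-density of $H$, applied to $F/N\in\Abp$, gives $HN=F$.
\end{enumerate}
Combining these, $F=HN=H(H\cap N)(N\cap K)\subseteq HK$, so $HK=F$. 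Since $K$ was arbitrary, $H$ is $\Hp$-dense in $F$.

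The only step needing care is the core argument in ($\Rightarrow$). Density in $N$ is tested against subgroups normal in $N$ but not necessarily in $F$, so we must pass to a subgroup normal in $F$ while keeping the quotient a $p$-group; finiteness of $F/N$ is exactly what makes the core a finite intersection. Finite generation of $H$ is not needed for this equivalence.
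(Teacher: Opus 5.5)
Your proof is correct. The paper does not prove this lemma at all; it imports it from \cite[Lemma~3.11]{CW}. So your argument is a self-contained replacement for that citation rather than a variant of an internal proof.

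Both directions rest on the right two facts.
\begin{itemize}
\item For ($\Rightarrow$), you replace $K\trianglelefteq N_{p-1}$ by its normal core $K_0$ in $F$. This is a finite intersection because $K$ is already normal in $N_{p-1}$, so its $F$-conjugates depend only on the $(p-1)^2$ cosets of $N_{p-1}$. Then $F/K_0$ is visibly an extension of the $p$-group $N_{p-1}/K_0$ by $F/N_{p-1}\in\Abp$, hence lies in $\Hp$.
\item For ($\Leftarrow$), you use that $N_{p-1}=[F,F]F^{p-1}$ is the smallest normal subgroup of $F$ with quotient in $\Abp$. It therefore lies in the preimage $P$ of the normal $p$-subgroup of $F/K$, and $N_{p-1}/(N_{p-1}\cap K)\cong N_{p-1}K/K\le P/K$ is a $p$-group. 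Combined with $HN_{p-1}=F$, this gives $HK=F$.
\end{itemize}

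The citation buys the paper brevity. Your route buys self-containedness and transparency. It shows that finite generation of $H$ plays no role in this equivalence. It also shows that nothing beyond the pseudovariety axioms (closure under subgroups and finite direct products) is used. The same two steps prove the analogous statement for any product $\mathbf U*\mathbf V$ of pseudovarieties, provided $F$ has a smallest normal subgroup with quotient in $\mathbf V$.
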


Let $\PP$ denote the set of all rational primes.

\begin{defn}
\label{defn:D}
For a finitely generated subgroup $H\le F$ let
\[
\mathfrak D(H)=\bigl\{p\in\PP:\ H\ \text{is }\Hp\text{-dense in }F\bigr\} .
\]
\end{defn}

\section{The Density Criterion}
\label{sec:density}

Let
$
\mathfrak C_H=\R\cdot\ab_{[F,F]}(C_H),
$
be the $\R$-submodule of $[F,F]^{\ab}$ generated by $C_H$. By
Lemma~\ref{lem:rankone}, the map $r\mapsto r\cdot\theta$ is an isomorphism $\R\to[F,F]^{\ab}$,
so submodules of $[F,F]^{\ab}$ correspond bijectively to ideals of $\R$. Hence there exists a unique ideal $I_H\subseteq\R$ such that
\[
\mathfrak C_H=I_H\cdot\theta.
\]

\begin{lem}
\label{lem:Rclosed}
Assume $A_H=F^{\ab}$. Then
$\ab_{[F,F]}(C_H)=\mathfrak C_H$.
\end{lem}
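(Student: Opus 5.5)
The plan is to show that the image $M=\ab_{[F,F]}(C_H)$ is already closed under the $\R$-action. Since $\mathfrak C_H$ is by definition the smallest $\R$-submodule containing $M$, this gives $M=\mathfrak C_H$.

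First, $M$ is an additive subgroup of $[F,F]^{\ab}$. This holds because $C_H=H\cap[F,F]$ is a subgroup of $[F,F]$ and $\ab_{[F,F]}$ is a homomorphism. So $M$ is closed under sums, negatives and $\Z$-multiples. Since $\R$ is generated as a ring over $\Z$ by $t_1^{\pm1},t_2^{\pm1}$, every element of $\R$ is a $\Z$-linear combination of monomials $t_1^{a_1}t_2^{a_2}$. It therefore suffices to show that $t_1^{a_1}t_2^{a_2}\cdot M\subseteq M$ for all $(a_1,a_2)\in\Z^2$.

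For the monomial step I would use the hypothesis $A_H=F^{\ab}$. Since $\ab_F(H)=F^{\ab}$, there is some $h\in H$ with $\ab_F(h)=a_1e_1+a_2e_2$, so $\kappa_F(\ab_F(h))=t_1^{a_1}t_2^{a_2}$. As explained in Subsection~\ref{sec:tomaszewski}, the action of a monomial may be computed by conjugating with any element of $F$ whose abelianization is the corresponding vector. This choice is well defined because $[F,F]$ acts trivially on $[F,F]^{\ab}$. Hence for $v\in C_H$,
\[
t_1^{a_1}t_2^{a_2}\cdot\overline{v}=\overline{h^{-1}vh}.
\]
Now $h^{-1}vh$ lies in $H$, since $h,v\in H$, and it lies in $[F,F]$, since $[F,F]$ is normal in $F$. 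So $h^{-1}vh\in C_H$, and therefore $t_1^{a_1}t_2^{a_2}\cdot\overline v\in M$.

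Combining the two steps, $M$ is an $\R$-submodule containing $\ab_{[F,F]}(C_H)$, so $\mathfrak C_H\subseteq M$. The reverse inclusion $M\subseteq\mathfrak C_H$ is trivial, and so equality holds.

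There is no real obstacle here. The only point needing care is that the monomial action is independent of the lift $y$ of $\ab_F(y)$, so that a lift can be chosen inside $H$. This independence is exactly what the paper established before Lemma~\ref{lem:rankone}, and it is where the hypothesis $A_H=F^{\ab}$ enters.
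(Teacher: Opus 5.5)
Your proposal is correct and follows essentially the same route as the paper: you lift each monomial to an element $h\in H$ (possible since $A_H=F^{\ab}$), observe that $h^{-1}vh\in C_H$, and conclude that $\ab_{[F,F]}(C_H)$ is already an $\R$-submodule. You also make explicit that this image is an additive subgroup, so closure under monomials suffices; the paper leaves that step tacit.
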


\begin{proof}
For $v\in C_H$ and a monomial $t_1^{a_1}t_2^{a_2}\in\R$, since $A_H=F^{\ab}$ there is $y\in H$ with $\kappa_F(\ab_F(y))=t_1^{a_1}t_2^{a_2}$. Then
\[
t_1^{a_1}t_2^{a_2}\cdot\ab_{[F,F]}(v)=\overline {y^{-1}vy}\in \ab_{[F,F]}(C_H),
\]
the membership because $y^{-1}vy\in C_H$, $C_H$ being normal in $H$.
Hence $\mathfrak C_H\subseteq \ab_{[F,F]}(C_H)$, and the reverse inclusion is immediate.
\end{proof}

Call a free basis $h_1,h_2$ of $H$ \emph{normalized} if $\ab_F(h_i)=e_i$ for
$i=1,2$; by Lemma~\ref{lem:normalized-basis} a normalized basis exists as soon
as $H$ has rank two and $A_H=F^{\ab}$.

\begin{prop}
\label{prop:principal}
Let $H\le F$ be free of rank two with $A_H=F^{\ab}$, and let $h_1,h_2$ be a
normalized basis of $H$.  Let $g\in\R$ be the element determined by
\[
\overline{[h_1,h_2]}=g\cdot\theta ,
\]
which exists and is unique because $[F,F]^{\ab}$ is free of rank one on
$\theta$ (Lemma~\ref{lem:rankone}).  Then
\[
\mathfrak C_H=(g)\cdot\theta,\qquad
I_H=(g),\qquad
g(1,1)=1 .
\]
\end{prop}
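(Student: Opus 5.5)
We first identify $\ab_{[F,F]}(C_H)$ and then compute the value $g(1,1)$.

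By Lemma~\ref{lem:normalized-basis}, $C_H=[H,H]$. Since $H$ is free on $h_1,h_2$, the subgroup $[H,H]$ is the normal closure in $H$ of $[h_1,h_2]$. Every element of $[H,H]$ is therefore a product of $H$-conjugates $y^{-1}[h_1,h_2]^{\pm1}y$ with $y\in H$. Applying $\ab_{[F,F]}$ turns such a product into a sum. Each term $\overline{y^{-1}[h_1,h_2]y}$ equals $\pm\kappa_F(\ab_F(y))\cdot g\theta$, which lies in $(g)\theta$. Hence $\ab_{[F,F]}(C_H)\subseteq(g)\theta$.

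Conversely, $\overline{[h_1,h_2]}=g\theta$ lies in $\ab_{[F,F]}(C_H)$. By Lemma~\ref{lem:Rclosed} this set equals $\mathfrak C_H$, which is an $\R$-submodule, so $(g)\theta\subseteq\mathfrak C_H$. Combining the two inclusions with Lemma~\ref{lem:Rclosed} again gives
\[
\mathfrak C_H=\ab_{[F,F]}(C_H)=(g)\theta .
\]
Since $r\mapsto r\theta$ is an isomorphism $\R\to[F,F]^{\ab}$ (Lemma~\ref{lem:rankone}), it follows that $I_H=(g)$.

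The main work is proving $g(1,1)=1$. We use the augmentation $\R\to\Z$, $t_i\mapsto1$. Under the identification $[F,F]^{\ab}\cong\R\theta$, augmentation corresponds to the map $[F,F]^{\ab}\to[F,F]/[F,[F,F]]$. This map kills exactly $(t_i-1)\cdot[F,F]^{\ab}$, since $t_i$ acts by conjugation by $x_i$. Because $[F,F]/\gamma_3F$ is infinite cyclic on the image of $[x_1,x_2]$, the composite sends $r\theta\mapsto\varepsilon(r)$, where $\varepsilon(r)=r(1,1)$ is the augmentation. So $g(1,1)$ is the image of $[h_1,h_2]$ in $\gamma_2F/\gamma_3F\cong\Z$.

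To compute that image, write $h_i=x_i c_i$ with $c_i\in[F,F]$; this is possible because $\ab_F(h_i)=e_i$. By bilinearity of the commutator map $F^{\ab}\times F^{\ab}\to\gamma_2F/\gamma_3F$, the class of $[h_1,h_2]$ equals the class of $[x_1,x_2]$, namely $1$. Hence $g(1,1)=1$.

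The step most likely to need care is the identification of $\gamma_2F/\gamma_3F$ with $[F,F]^{\ab}/I\cdot[F,F]^{\ab}$, where $I=(t_1-1,t_2-1)$ is the augmentation ideal. I would verify it directly: $[F,[F,F]]$ is generated modulo $[[F,F],[F,F]]$ by elements $[v,y]=v^{-1}v^y$, whose images are $(\kappa_F(\ab_F y)-1)\bar v$, and these span $I\cdot[F,F]^{\ab}$.
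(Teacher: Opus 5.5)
Your proof is correct. For $\mathfrak C_H=(g)\theta$ and $I_H=(g)$ you argue as the paper does: generate $C_H=[H,H]$ by $H$-conjugates of $[h_1,h_2]^{\pm1}$ and push them through $\ab_{[F,F]}$. For $g(1,1)=1$ you take a different route.

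The paper writes $h_i=x_iv_i$ and expands $[x_1v_1,x_2v_2]$ with the commutator identities. Applying $\ab_{[F,F]}$ then gives the exact formula $g=1+(1-t_1)s_2+(t_2-1)s_1$, where $\overline{v_i}=s_i\theta$. You instead reduce modulo $I\cdot[F,F]^{\ab}$, recognize the quotient as $\gamma_2F/\gamma_3F$, and use bilinearity of the commutator pairing there.

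Your route is more conceptual: it exhibits $g(1,1)$ as the class of $[h_1,h_2]$ in $\gamma_2F/\gamma_3F$. It also needs less than you worry about. It suffices that the homomorphism $[F,F]\to\Z$, $v\mapsto\varepsilon(r)$ where $\overline v=r\theta$, kills $\gamma_3F$. That is the easy inclusion, since the image of $[v,y]$ is $(\kappa_F(\ab_F y)-1)\overline v\in I\cdot[F,F]^{\ab}$. Then $[h_1,h_2]\equiv[x_1,x_2]$ modulo $\gamma_3F$ gives $\varepsilon(g)=\varepsilon(1)=1$ directly. Neither the reverse inclusion nor the infinite cyclicity of $\gamma_2F/\gamma_3F$ is used.

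The paper's explicit computation buys something yours does not. The precise dependence of $g$ on $v_1,v_2$ is exactly what Proposition~\ref{prop:realize} reuses to realize every $g$ with $g(1,1)=1$. Knowing only that $g-1$ lies in the augmentation ideal would not show that the subgroup built from chosen $s_1,s_2$ has generator exactly $g$.
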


\begin{proof}

As $\ab_F(h_i)=e_i$, write $h_i=x_iv_i$ with $v_i\in [F,F]$:
\[
[h_1,h_2]=[x_1v_1,x_2v_2]
=\bigl([x_1,v_2]\,[x_1,x_2]^{v_2}\bigr)^{v_1}\,
[v_1,v_2]\,[v_1,x_2]^{v_2}.
\]
Applying $\ab_{[F,F]}$, and using $\ab_{[F,F]}([v_1,v_2])=0$, we get
\[
\overline{[h_1,h_2]}
=\theta+\overline{[x_1,v_2]}+\overline{[v_1,x_2]}=\theta+(1-t_1)\cdot\overline{v_2}+(t_2-1)\cdot\overline{v_1}.
\]
Write $\overline{v_i}=s_i\cdot\theta$ with $s_i\in\R$, so that 
\[
\overline{[h_1,h_2]}
=(1+(1-t_1)s_2+(t_2-1)s_1)\cdot\theta.
\]
Therefore $g=1+(1-t_1)s_2+(t_2-1)s_1$, and $g(1,1)=1$.

By
Lemma~\ref{lem:Rclosed}, $\ab_{[F,F]}(C_H)=\mathfrak C_H$.  Since
$C_H=[H,H]$ is generated as a group by the conjugates $[h_1,h_2]^{h}$
($h\in H$), for each such conjugate
\[
\ab_{[F,F]}\bigl([h_1,h_2]^{h}\bigr)
=\ab_F(h)\cdot\ab_{[F,F]}([h_1,h_2])
=\kappa_F(\ab_F(h))\cdot(g\cdot\theta).
\]
As $A_H=F^{\ab}$, the elements $\kappa_F(\ab_F(h))$, $h\in H$, range over
all monomials $t_1^{a_1}t_2^{a_2}$, so
$\mathfrak C_H=\ab_{[F,F]}(C_H)=\R\cdot(g\cdot\theta)=(g)\cdot\theta$ and $I_H=(g)$.
\end{proof}

\begin{rem}
\label{rem:g-depends-on-basis}
The element $g$ depends on the chosen basis, whereas the ideal $I_H$  is the invariant of $H$.  Indeed, let
$\{h_1',h_2'\}$ be any basis of $H$.  By Nielsen's theorem every
automorphism of a free group of rank two carries $[h_1,h_2]$ to a conjugate
of $[h_1,h_2]^{\pm1}$ (\cite[Theorem~3.9]{MKS}); so
$[h_1',h_2']=\bigl([h_1,h_2]^{\pm 1}\bigr)^{h}$ for some $h\in H$.  Applying $\ab_{[F,F]}$ and
writing $t_1^at_2^b=\kappa_F(\ab_F(h))$ gives
\[
\ab_{[F,F]}\bigl([h_1',h_2']\bigr)=\pm\,t_1^{a}t_2^{b}\cdot(g\cdot\theta) .
\]
Since $\pm t_1^at_2^b$ is a unit of $\R$, the element $g'$
determined by $\ab_{[F,F]}([h_1',h_2'])=g'\cdot\theta$ satisfies
$(g')=(g)=I_H$.  Multiplying by a monomial to clear negative exponents, dividing out the largest monomial factor common to all terms, and then multiplying by $-1$ if necessary, we may
assume
\begin{equation}
\label{eq:normalized}
g\in\Z[t_1,t_2],\qquad t_1\nmid g,\quad t_2\nmid g,\qquad g(1,1)=1 ,
\end{equation}
the last condition being automatic, a monomial taking the value $1$ at $(1,1)$.  Call such a $g$ the \emph{normalized} generator of
$I_H$. It is unique: the units of $\R$ being $\pm t_1^at_2^b$, two such
generators differ by $\pm t_1^at_2^b$, and \eqref{eq:normalized} forces
$a=b=0$ and the sign $+$; hence $\deg g$ is an invariant of $H$.
\end{rem}

The above $g$ is subject to no constraint beyond
$g(1,1)=1$: every such element is realized by some subgroup $H$ with $I_H=(g)$.

\begin{prop}
\label{prop:realize}
Let $g\in\R$ satisfy $g(1,1)=1$.  Choose $s_1,s_2\in\R$ with
\begin{equation}
\label{eq:solve-g}
g-1=(1-t_1)s_2+(t_2-1)s_1
\end{equation}
and $v_1,v_2\in[F,F]$ with $\ab_{[F,F]}(v_i)=s_i\cdot\theta$, and let
$h_i=x_iv_i$ and $H=\langle h_1,h_2\rangle$.  Then $H$ is free of rank two
with $A_H=F^{\ab}$, the pair $h_1,h_2$ is a basis of $H$ with
$\ab_F(h_i)=e_i$, and $I_H=(g)$.
\end{prop}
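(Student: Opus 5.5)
Three things need checking: that $s_1,s_2$ and $v_1,v_2$ can be chosen, that $H$ is free of rank two with $A_H=F^{\ab}$ and basis $h_1,h_2$, and that $I_H=(g)$.

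\emph{Existence of the choices.} The ideal $(1-t_1,\,t_2-1)\subseteq\R$ is the augmentation ideal, i.e.\ the kernel of $f\mapsto f(1,1)$. Indeed, modulo this ideal every monomial $t_1^at_2^b$ is congruent to $1$, so $f\equiv f(1,1)$. Since $g(1,1)=1$, the element $g-1$ lies in this ideal, and \eqref{eq:solve-g} has a solution. By Lemma~\ref{lem:rankone}, $\ab_{[F,F]}:[F,F]\to[F,F]^{\ab}=\R\theta$ is surjective, so there are $v_i\in[F,F]$ with $\overline{v_i}=s_i\theta$.

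\emph{Structure of $H$.} Since $v_i\in[F,F]$, we have $\ab_F(h_i)=\ab_F(x_i)=e_i$, hence $A_H=F^{\ab}$. The composite $H\to F^{\ab}=\Z^2$ is therefore surjective. A group generated by two elements that surjects onto $\Z^2$ has abelianization of rank $2$, so $H$ is not cyclic. By Nielsen--Schreier, $H$ is a free group, and being $2$-generated and noncyclic it has rank exactly $2$. Hopficity of the free group of rank $2$ then shows that the surjection from the free group on $h_1,h_2$ onto $H$ is an isomorphism. Thus $h_1,h_2$ is a free basis, and it is normalized because $\ab_F(h_i)=e_i$.

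\emph{Computing $I_H$.} Proposition~\ref{prop:principal} now applies to this normalized basis. The computation in its proof shows
\[
\overline{[h_1,h_2]}=\bigl(1+(1-t_1)s_2+(t_2-1)s_1\bigr)\theta .
\]
By \eqref{eq:solve-g} the coefficient equals $g$, so the proposition gives $I_H=(g)$.

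The only step needing care is the rank-two and basis claim. It rests on the Hopfian property of finitely generated free groups together with Nielsen--Schreier, and both are standard. The rest follows directly from the augmentation-ideal computation and Proposition~\ref{prop:principal}.
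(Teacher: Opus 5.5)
Your proof is correct and follows the paper's argument step for step: the augmentation-ideal observation to solve \eqref{eq:solve-g}, surjectivity of $\ab_{[F,F]}$ to obtain the $v_i$, $\ab_F(h_i)=e_i$ to get $A_H=F^{\ab}$, and the commutator computation from Proposition~\ref{prop:principal} to conclude $I_H=(g)$. The appeal to Nielsen--Schreier and Hopficity, showing that $h_1,h_2$ is a free basis, makes explicit a step the paper compresses into ``$H$ is not cyclic, so is free of rank two, and $h_1,h_2$ is a normalized basis of it.''
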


\begin{proof}
The evaluation $\mathrm{ev}_{\R}:\R\to\Z$, $t_1,t_2\mapsto1$, is the
augmentation of the group ring $\Z[F^{\ab}]$ under $\kappa_F$, so its kernel
is the augmentation ideal $(t_1-1,t_2-1)$.  As $\mathrm{ev}_{\R}(g-1)=0$, a pair
$s_1,s_2$ as in \eqref{eq:solve-g} exists; and $v_1,v_2$ exist because
$\ab_{[F,F]}$ maps $[F,F]$ onto $[F,F]^{\ab}=\R\cdot\theta$.

Since $v_i\in[F,F]$ we have $\ab_F(h_i)=\ab_F(x_i)=e_i$, so
$A_H=\ab_F(H)$ contains the basis $e_1,e_2$ of $F^{\ab}$ and hence equals
$F^{\ab}$.  In particular $H$ is not cyclic, so is free of rank two, and $h_1,h_2$ is a normalized basis of it.  The
computation in the proof of
Proposition~\ref{prop:principal}, carried out with these $v_i$, gives
$\ab_{[F,F]}([h_1,h_2])=(1+(1-t_1)s_2+(t_2-1)s_1)\cdot\theta$, that is $\ab_{[F,F]}([h_1,h_2])=g\cdot\theta$ by
\eqref{eq:solve-g}.  Proposition~\ref{prop:principal} then yields
$I_H=(g)$.
\end{proof}
Throughout $H\le F$ is \emph{free of rank two with $A_H=\ab_F(H)=F^{\ab}$}. We
fix a normalized basis $h_1,h_2$ of $H$ and let $g\in\R$ be the associated
element, so that $\ab_{[F,F]}([h_1,h_2])=g\cdot\theta$, $I_H=(g)$ and
$g(1,1)=1$ as in Proposition~\ref{prop:principal}.

Fix a prime $p$, and let $S=H\cap N_{p-1}$.  By Lemma~\ref{lem:normalized-basis} with $k=p-1$
\[
S=\langle h_1^{p-1},h_2^{p-1},C_H\rangle,\qquad
N_{p-1}=\langle h_1^{p-1},h_2^{p-1},[F,F]\rangle .
\]
For $p=2$ this reads $S=H$ and $N_1=F$, and all statements below hold in
the resulting degenerate form (e.g.\ $V=W=0$ in Lemma~\ref{lem:VW}).

\begin{lem}
\label{lem:decompAb}
Both of the following are direct sums:
\begin{enumerate}[label=(\arabic*), ref=(\arabic*)]
\item\label{dab:N}
$N_{p-1}^{\ab}=\ab_{N_{p-1}}\bigl(\langle h_1^{p-1},h_2^{p-1}\rangle\bigr)
\oplus \ab_{N_{p-1}}([F,F])$;
\item\label{dab:S}
$\ab_{N_{p-1}}(S)=\ab_{N_{p-1}}\bigl(\langle h_1^{p-1},h_2^{p-1}\rangle\bigr)
\oplus \ab_{N_{p-1}}(C_H)$.
\end{enumerate}
\end{lem}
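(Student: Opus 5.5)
The plan is to reduce both direct-sum statements to the projection $N_{p-1}\to N_{p-1}/[F,F]$. Write $N=N_{p-1}$ and $P=\langle h_1^{p-1},h_2^{p-1}\rangle$.

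\textbf{Sums.} By Lemma~\ref{lem:normalized-basis}, $N=\langle P,[F,F]\rangle$ and $S=\langle P,C_H\rangle$. Since $\ab_N$ is a homomorphism onto an abelian group, the images add:
\[
N^{\ab}=\ab_N(P)+\ab_N([F,F]),\qquad \ab_N(S)=\ab_N(P)+\ab_N(C_H).
\]

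\textbf{Directness of (1).} Let $\pi:N\to N/[F,F]$. The quotient $N/[F,F]=(p-1)F^{\ab}$ is abelian, so $\pi$ factors through $N^{\ab}$ as $\bar\pi:N^{\ab}\to(p-1)F^{\ab}\cong\Z^2$. Two facts follow.
\begin{enumerate}[label=(\alph*)]
\item $\bar\pi$ kills $\ab_N([F,F])$, since $[F,F]=\ker\pi$.
\item $\bar\pi$ is injective on $\ab_N(P)$. This subgroup is generated by the two elements $\ab_N(h_i^{p-1})$, so it is a quotient of $\Z^2$. Its image under $\bar\pi$ is spanned by $(p-1)e_1,(p-1)e_2$, which is free of rank two. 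A surjection from a quotient of $\Z^2$ onto $\Z^2$ must be an isomorphism, so $\bar\pi$ restricted to $\ab_N(P)$ is an isomorphism onto $(p-1)F^{\ab}$.
\end{enumerate}
Now take $x\in\ab_N(P)\cap\ab_N([F,F])$. By (a), $\bar\pi(x)=0$, and then by (b), $x=0$. So the sum in (1) is direct.

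\textbf{Directness of (2).} This is inherited from (1), because $C_H\subseteq[F,F]$ gives $\ab_N(C_H)\subseteq\ab_N([F,F])$. Hence $\ab_N(P)\cap\ab_N(C_H)\subseteq\ab_N(P)\cap\ab_N([F,F])=0$.

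\textbf{Case $p=2$.} Here $P$ is generated by $h_1,h_2$, so $P=H$ and $N=F$. The argument is unchanged: $\bar\pi$ is just $\ab_F$, and $\ab_F([F,F])=0$, so the second summand in each decomposition vanishes.

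The only step that needs care is (b). One must verify that $\ab_N(P)$ has rank exactly two, and this is obtained by mapping it onto a free abelian group of rank two. Once (b) holds, everything else is formal.
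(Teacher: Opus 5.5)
Your proposal is correct and follows essentially the same route as the paper. The paper also factors $N_{p-1}\to N_{p-1}/[F,F]$ through $N_{p-1}^{\ab}$, and uses that $N_{p-1}/[F,F]$ is free on the images of $h_1^{p-1},h_2^{p-1}$ while $\ab_{N_{p-1}}([F,F])$ maps to zero, which forces the intersection to vanish. Part (2) is then inherited from (1) exactly as you do it.
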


\begin{proof}
\ref{dab:N} Let $Q=N_{p-1}/[F,F]$. Since
$[N_{p-1},N_{p-1}]\le[F,F]$, the quotient map $N_{p-1}\twoheadrightarrow Q$
factors through the abelianization, yielding a canonical projection
$q:N_{p-1}^{\ab}\twoheadrightarrow Q$.  By Lemma~\ref{lem:normalized-basis},
$N_{p-1}=\langle  h_1^{p-1},h_2^{p-1},[F,F]\rangle$; hence $Q$ is freely generated by
$q(\ab_{N_{p-1}}(h_1^{p-1})),q(\ab_{N_{p-1}}(h_2^{p-1}))$.

The generators listed above give
$N_{p-1}^{\ab}=\ab_{N_{p-1}}(\langle  h_1^{p-1},h_2^{p-1}\rangle)+\ab_{N_{p-1}}([F,F])$.
If $w$ lies in the intersection of these two subgroups, write
\[w=a\cdot\ab_{N_{p-1}}(h_1^{p-1})+b\cdot\ab_{N_{p-1}}(h_2^{p-1})=\ab_{N_{p-1}}(v)\] with
$a,b\in\Z$ and $v\in[F,F]$.  Then
$a\cdot q(\ab_{N_{p-1}}(h_1^{p-1}))+b\cdot q(\ab_{N_{p-1}}(h_2^{p-1}))=q(\ab_{N_{p-1}}(v))=0$. Hence $a=b=0$ and $w=0$, so the intersection is trivial and
the sum is direct.

\ref{dab:S} From $S=\langle  h_1^{p-1},h_2^{p-1},C_H\rangle$ we get
$\ab_{N_{p-1}}(S)=\ab_{N_{p-1}}(\langle  h_1^{p-1},h_2^{p-1}\rangle)+\ab_{N_{p-1}}(C_H)$.  Since
$\ab_{N_{p-1}}(C_H)\subseteq\ab_{N_{p-1}}([F,F])$, the two summands lie in
the two summands of \ref{dab:N}, so the sum is again direct.
\end{proof}

Since $N_{p-1}^{\ab}$ is an abelian group, the restriction of $\ab_{N_{p-1}}$ to $[F,F]$ factors through $\ab_{[F,F]}$,  giving the  following commutative diagram:
\begin{equation}
\label{tonab-k}
\begin{tikzcd}[column sep=large]
    {[F,F]} \ar[r, "{\ab_{[F,F]}}"] \ar[hook, d] & {[F,F]^{\ab}} \ar[d, "\iota"] \\
    N_{p-1} \ar[r, "\ab_{N_{p-1}}"'] & N_{p-1}^{\ab}
\end{tikzcd}
\end{equation}

\begin{lem}
\label{lem:iota-linear}
The canonical map $\iota$ of \eqref{tonab-k},
\[
\iota:[F,F]^{\ab}\longrightarrow N_{p-1}^{\ab},
\qquad
\iota\bigl(\ab_{[F,F]}(w)\bigr)=\ab_{N_{p-1}}(w)\quad(w\in[F,F]),
\]
induced by the inclusion $[F,F]\hookrightarrow N_{p-1}$, is $\R$-linear with respect
to the $\R$-module structures of Subsection~\ref{sec:tomaszewski}.
\end{lem}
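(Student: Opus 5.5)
First I make sure $\iota$ is well defined and additive. If $w,w'\in[F,F]$ have the same image under $\ab_{[F,F]}$, then $w^{-1}w'\in[[F,F],[F,F]]\subseteq[N_{p-1},N_{p-1}]$, because $[F,F]\subseteq N_{p-1}$. Hence $\ab_{N_{p-1}}(w)=\ab_{N_{p-1}}(w')$. Additivity follows because both abelianization maps are homomorphisms: $\iota$ is precisely the homomorphism induced on abelianizations by the inclusion $[F,F]\hookrightarrow N_{p-1}$. This is the map whose existence the diagram \eqref{tonab-k} records.

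Next I check that $\iota$ commutes with the $\R$-action. Since $\iota$ is additive, and $\R$ is generated as an abelian group by the monomials $t_1^{a_1}t_2^{a_2}$, it suffices to check compatibility with monomials. Fix a monomial $t_1^{a_1}t_2^{a_2}$ and pick $y\in F$ with $\kappa_F(\ab_F(y))=t_1^{a_1}t_2^{a_2}$. Section~\ref{sec:tomaszewski} applies here: both $[F,F]$ and $N_{p-1}$ are normal in $F$ and contain $[F,F]$. By its definition, the action on either abelianization is computed by conjugating by $y$, and the result is independent of the choice of $y$. For $w\in[F,F]$ we get
\[
\iota\bigl(t_1^{a_1}t_2^{a_2}\cdot\ab_{[F,F]}(w)\bigr)=\iota\bigl(\ab_{[F,F]}(y^{-1}wy)\bigr)=\ab_{N_{p-1}}(y^{-1}wy)=t_1^{a_1}t_2^{a_2}\cdot\ab_{N_{p-1}}(w),
\]
using $y^{-1}wy\in[F,F]$. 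Extending $\Z$-linearly gives $\R$-linearity.

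The only point needing care is that the same lift $y$ can be used on both sides. This is legitimate because both module structures are defined through the same isomorphism $\kappa_F$. Independence of the lift was shown in Section~\ref{sec:tomaszewski} for any normal $K\supseteq[F,F]$, so nothing further is needed.
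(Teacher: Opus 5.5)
Your proposal is correct and follows essentially the same route as the paper: reduce to monomials by $\Z$-linearity, represent the class as $\ab_{[F,F]}(w)$, and compute both sides by conjugating with a common lift $y$ (the paper takes $y=x_1^{a}x_2^{b}$). Your additional check that $\iota$ is well defined and additive is a harmless supplement; the paper takes this for granted from the diagram \eqref{tonab-k}.
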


\begin{proof}
Since $\R$ is generated
over $\Z$ by the monomials and both sides are $\Z$-linear, it suffices to prove  \[\iota(t_1^at_2^b\cdot m)=t_1^at_2^b\cdot\iota(m)\]
for every monomial and $m\in [F,F]^{\ab}$.

Assume $m=\ab_{[F,F]}(v)$.  The left-hand side is:
\[
\begin{aligned}
\iota\bigl(t_1^at_2^b\cdot\ab_{[F,F]}(v)\bigr)
&=\iota\Bigl(\ab_{[F,F]}\bigl((x_1^{a}x_2^{b})^{-1}v\,x_1^{a}x_2^{b}\bigr)\Bigr)\\
&=\ab_{N_{p-1}}\bigl((x_1^{a}x_2^{b})^{-1}v\,x_1^{a}x_2^{b}\bigr).
\end{aligned}
\]
The right-hand side is:
\[
t_1^at_2^b\cdot\iota\bigl(\ab_{[F,F]}(v)\bigr)
=t_1^at_2^b\cdot\ab_{N_{p-1}}(v)
=\ab_{N_{p-1}}\bigl((x_1^{a}x_2^{b})^{-1}v\,x_1^{a}x_2^{b}\bigr).
\]
The two results coincide, so the lemma follows.
\end{proof}

\begin{lem}
\label{lem:ppminus1-mod}
$N_{p-1}^{\ab}$ is an $\R_{p-1}$-module. Both $\ab_{N_{p-1}}([F,F])$ and $\ab_{N_{p-1}}(C_H)$ are cyclic submodules, generated by $\ab_{N_{p-1}}([x_1,x_2])$ and $\ab_{N_{p-1}}([h_1,h_2])$  respectively. Moreover $\ab_{N_{p-1}}([x_1,x_2])=\iota(\theta)$ and $\ab_{N_{p-1}}([h_1,h_2])=\iota(g\cdot\theta)$.
\end{lem}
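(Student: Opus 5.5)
The plan is to obtain the $\R_{p-1}$-module structure by checking that $t_1^{p-1}$ and $t_2^{p-1}$ act trivially. The cyclicity statements then follow by transporting the description of $[F,F]^{\ab}$ and $\mathfrak C_H$ along $\iota$, using Lemma~\ref{lem:iota-linear}.

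First, the $\R_{p-1}$-structure. By Subsection~\ref{sec:tomaszewski}, $N_{p-1}^{\ab}$ is an $\R$-module, because $[F,F]\subseteq N_{p-1}$ and $N_{p-1}\trianglelefteq F$. By definition, the monomial $t_i^{p-1}$ acts as conjugation by any $y\in F$ with $\ab_F(y)=(p-1)e_i$; take $y=x_i^{p-1}$. This $y$ lies in $N_{p-1}$ by \eqref{eq:Nk}. For $v\in N_{p-1}$ we have $y^{-1}vy=v[v,y]$ with $[v,y]\in[N_{p-1},N_{p-1}]$, so $\ab_{N_{p-1}}(y^{-1}vy)=\ab_{N_{p-1}}(v)$. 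Hence $t_i^{p-1}-1$ annihilates $N_{p-1}^{\ab}$, and the $\R$-action factors through $\R_{p-1}=\R/(t_1^{p-1}-1,t_2^{p-1}-1)$.

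Second, the two images. The diagram \eqref{tonab-k} gives $\ab_{N_{p-1}}(w)=\iota(\ab_{[F,F]}(w))$ for $w\in[F,F]$. Therefore
\[
\ab_{N_{p-1}}([F,F])=\iota([F,F]^{\ab})
\quad\text{and}\quad
\ab_{N_{p-1}}(C_H)=\iota\bigl(\ab_{[F,F]}(C_H)\bigr).
\]
By Lemma~\ref{lem:rankone}, $[F,F]^{\ab}=\R\cdot\theta$. By Lemma~\ref{lem:Rclosed} and Proposition~\ref{prop:principal}, $\ab_{[F,F]}(C_H)=\mathfrak C_H=\R\cdot(g\theta)$. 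Since $\iota$ is $\R$-linear (Lemma~\ref{lem:iota-linear}), $\iota(\R m)=\R\,\iota(m)$ for every $m\in[F,F]^{\ab}$. This gives the cyclic submodules $\R\,\iota(\theta)$ and $\R\,\iota(g\theta)$; these are $\R_{p-1}$-submodules because the action factors through $\R_{p-1}$.

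Finally, the generators are identified using the diagram once more: $\iota(\theta)=\iota(\ab_{[F,F]}([x_1,x_2]))=\ab_{N_{p-1}}([x_1,x_2])$, and $\iota(g\theta)=\iota(\ab_{[F,F]}([h_1,h_2]))=\ab_{N_{p-1}}([h_1,h_2])$. No step here is a real obstacle. The only point needing care is the first one: the action of $t_i^{p-1}$ must be computed with an element of $N_{p-1}$ itself, so that conjugation is inner there and hence trivial on the abelianization.
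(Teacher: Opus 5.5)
Your proposal is correct and follows the paper's proof essentially step for step. You make $t_i^{p-1}$ act trivially by conjugating with $x_i^{p-1}\in N_{p-1}$, and you then transport $[F,F]^{\ab}=\R\cdot\theta$ and $\ab_{[F,F]}(C_H)=(g)\cdot\theta$ along the $\R$-linear map $\iota$; you also spell out, via diagram \eqref{tonab-k} and the definition of $g$, the identity $\iota(g\cdot\theta)=\ab_{N_{p-1}}([h_1,h_2])$, which the paper leaves implicit.
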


\begin{proof}
 By
Section~\ref{sec:tomaszewski}, $N_{p-1}^{\ab}$ carries an $\R$-module structure:
a monomial $t_1^{a_1}t_2^{a_2}\in\R$ acts on a class $\ab_{N_{p-1}}(v)\in N_{p-1}^{\ab}$
(represented by an element $v\in N_{p-1}$) by 
\[
t_1^{a_1}t_2^{a_2}\cdot \ab_{N_{p-1}}(v)=\ab_{N_{p-1}}(y^{-1}vy)
\]
where $y\in F$ is any lift satisfying
$\ab_F(y)=a_1e_1+a_2e_2$.

 For every $v\in N_{p-1}$, since $x_i^{p-1}\in N_{p-1}$,  $[v,x_i^{p-1}]$ lies in $[N_{p-1},N_{p-1}]$, and
\[
t_i^{p-1}\cdot\ab_{N_{p-1}}(v)=\ab_{N_{p-1}}(x_i^{-(p-1)}\,v\,x_i^{p-1})=\ab_{N_{p-1}}(v[v,x_i^{p-1}])=\ab_{N_{p-1}}(v).
\]
Hence $t_i^{p-1}$ acts as the identity on $N_{p-1}^{\ab}$, and the $\R$-action
depends only on the exponent of $t_i$ modulo $p-1$.  The action therefore factors
through the quotient $\R_{p-1}=\R/(t_1^{p-1}-1,t_2^{p-1}-1)$, so $N_{p-1}^{\ab}$
is an $\R_{p-1}$-module.  For every $m\in N_{p-1}^{\ab}$ we then have
$\R\cdot m=\R_{p-1}\cdot m$; hence a cyclic $\R$-submodule on a given generator
is equally the cyclic $\R_{p-1}$-submodule on the same generator.

By Lemma~\ref{lem:rankone}, $[F,F]^{\ab}$ is a free rank-one
$\R$-module on $\theta=\overline{[x_1,x_2]}$.  As $\iota$ is $\R$-linear (Lemma~\ref{lem:iota-linear}), its image
$\ab_{N_{p-1}}([F,F])$ is the cyclic $\R_{p-1}$-submodule
generated by
\[
\iota(\theta)=\iota\bigl(\ab_{[F,F]}([x_1,x_2])\bigr)=\ab_{N_{p-1}}([x_1,x_2]).
\]

By Lemma~\ref{lem:Rclosed}, $\ab_{[F,F]}(C_H)=\mathfrak C_H=\R\cdot\ab_{[F,F]}(C_H)$;
by Proposition~\ref{prop:principal}, $\mathfrak C_H=(g)\cdot\theta$.  Hence
$\ab_{[F,F]}(C_H)=\R\cdot(g\cdot\theta)$, the cyclic $\R$-submodule generated by $g\cdot\theta$.
Applying the $\R$-linear $\iota$,
$\ab_{N_{p-1}}(C_H)=\iota(\ab_{[F,F]}(C_H))$ is the cyclic $\R_{p-1}$-submodule
generated by $\iota(g\cdot\theta)$.
\end{proof}

Let $\rho\colon N_{p-1}^{\ab}\twoheadrightarrow N_{p-1}^{\ab}/pN_{p-1}^{\ab}$ be the
quotient map and write $M=\rho(N_{p-1}^{\ab})$.  By Lemma~\ref{lem:pdense},
\[
\Cl_{\Gp}(S,N_{p-1})=N_{p-1}
\iff \ab_{N_{p-1}}(S)+pN_{p-1}^{\ab}=N_{p-1}^{\ab}
\iff \rho(\ab_{N_{p-1}}(S))=M .
\]
Let $W=\rho\bigl(\ab_{N_{p-1}}(C_H)\bigr)$ and
$V=\rho\bigl(\ab_{N_{p-1}}([F,F])\bigr)$.  By Lemma~\ref{lem:decompAb} the two
decompositions are compatible:
\[
\rho(\ab_{N_{p-1}}(S))
=\rho\bigl(\ab_{N_{p-1}}(\langle h_1^{p-1},h_2^{p-1}\rangle)\bigr)\oplus W,
\qquad
M
=\rho\bigl(\ab_{N_{p-1}}(\langle h_1^{p-1},h_2^{p-1}\rangle)\bigr)\oplus V,
\]
so
\begin{equation}
\label{eq:WV}
\Cl_{\Gp}(S,N_{p-1})=N_{p-1}\iff W=V.
\end{equation}

\begin{lem}
\label{lem:VW}
$M$ is a module over $\R_{p,p-1}$; it has two distinguished cyclic submodules. The submodule $V$ is generated by $\eps=\rho\circ\iota(\theta)$ and has the single defining relation
\[
\Sigma_1\Sigma_2\cdot\eps=0;
\]
equivalently
\begin{equation}
\label{eq:comm-part}
V\;\cong\;\R_{p,p-1}/(\Sigma_1\Sigma_2)\;\cong\;\prod_{\zeta\in(\F_p^\times)^2\setminus\{(1,1)\}}\F_p .
\end{equation}
The submodule $W\subseteq V$ is generated by $\rho\circ\iota(g\cdot\theta)$.
\end{lem}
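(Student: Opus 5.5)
The plan is to get the module structure and the generators from Lemma~\ref{lem:ppminus1-mod}, and then to identify the annihilator of $\eps$ by a dimension count.

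\emph{Module structure and generators.} By Lemma~\ref{lem:ppminus1-mod}, $N_{p-1}^{\ab}$ is an $\R_{p-1}$-module. Hence $M=N_{p-1}^{\ab}/pN_{p-1}^{\ab}$ is a module over $\R_{p-1}/p\R_{p-1}=\R_{p,p-1}$. The map $\rho$ is $\R_{p-1}$-linear, so it carries cyclic submodules to cyclic submodules. Lemma~\ref{lem:ppminus1-mod} then shows that $V$ is generated by $\eps=\rho\iota(\theta)$ and $W$ by $\rho\iota(g\cdot\theta)$. The inclusion $W\subseteq V$ follows from $C_H\subseteq[F,F]$. The real content is the claim that the annihilator of $\eps$ is exactly $(\Sigma_1\Sigma_2)$. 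The second isomorphism in \eqref{eq:comm-part} is then Corollary~\ref{cor:JSigma}.

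\emph{Step 1: $\Sigma_1\Sigma_2$ kills $\eps$.} I will verify this integrally, in $N_{p-1}^{\ab}$. Expanding commutators in $[F,F]^{\ab}$ with the conventions $v^y=y^{-1}vy$ and $[y_1,y_2]=y_1^{-1}y_2^{-1}y_1y_2$, one shows by induction on $n,m\ge1$ that
\[
\overline{[x_1^{n},x_2^{m}]}=\Bigl(\textstyle\sum_{r=0}^{n-1}t_1^{r}\Bigr)\Bigl(\sum_{s=0}^{m-1}t_2^{s}\Bigr)\cdot\theta .
\]
The induction uses $[x_1^{n+1},x_2]=[x_1,x_2]^{x_1^{n}}[x_1^{n},x_2]$ and the analogous identity in the second variable. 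The formula is correct possibly up to a unit monomial, which is harmless. Take $n=m=p-1$ and apply $\iota$, which is $\R$-linear by Lemma~\ref{lem:iota-linear}. This gives $\Sigma_1\Sigma_2\cdot\iota(\theta)=\ab_{N_{p-1}}([x_1^{p-1},x_2^{p-1}])$, up to a unit. This element is $0$, because $x_1^{p-1},x_2^{p-1}\in N_{p-1}$ and so their commutator lies in $[N_{p-1},N_{p-1}]$. Applying $\rho$ gives $\Sigma_1\Sigma_2\cdot\eps=0$.

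\emph{Step 2: dimension count.} Since $N_{p-1}$ has index $(p-1)^2$ in $F$, the Schreier formula shows it is free of rank $(p-1)^2+1$. Hence $\dim_{\F_p}M=(p-1)^2+1$. Next consider the summand $\rho(\ab_{N_{p-1}}\langle h_1^{p-1},h_2^{p-1}\rangle)$ of $M$. It has dimension exactly $2$: it is spanned by two elements, and their images under $q$ reduced mod $p$ form a basis of $Q/pQ\cong\F_p^2$, where $Q\cong\Z^2$ is as in the proof of Lemma~\ref{lem:decompAb}. By the direct sum $M=\rho(\ab_{N_{p-1}}\langle h_1^{p-1},h_2^{p-1}\rangle)\oplus V$, we get $\dim V=(p-1)^2-1$.

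\emph{Conclusion and main obstacle.} The surjection $\R_{p,p-1}\to V$, $r\mapsto r\eps$, factors through $\R_{p,p-1}/(\Sigma_1\Sigma_2)$ by Step 1. By Corollary~\ref{cor:JSigma} this quotient has dimension $(p-1)^2-1=\dim V$, so the induced map is an isomorphism. I expect the main obstacle to be getting the commutator expansion in Step 1 exactly right under the paper's conventions, with the correct monomial factor. That factor is irrelevant to the conclusion, since units do not change ideals, so only the product shape $\Sigma_1\Sigma_2$ matters. The remaining steps are routine. The case $p=2$ is degenerate: both sides are $0$, since then $(p-1)^2-1=0$.
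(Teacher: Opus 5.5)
Your proposal is correct and follows the paper's proof essentially step for step. The module structure and generators come from Lemma~\ref{lem:ppminus1-mod}; the relation $\Sigma_1\Sigma_2\cdot\eps=0$ comes from expanding the commutator $[x_1^{p-1},x_2^{p-1}]$, which lies in $[N_{p-1},N_{p-1}]$; and the isomorphism $\R_{p,p-1}/(\Sigma_1\Sigma_2)\cong V$ comes from the count $\dim_{\F_p}M=(p-1)^2+1$ minus the two-dimensional summand. Your hedge about a monomial factor is unnecessary, since under the paper's conventions $\overline{[x_1^{p-1},x_2^{p-1}]}=\Sigma_1\Sigma_2\cdot\theta$ holds exactly. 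Your use of $Q/pQ$ to show the first summand is exactly two-dimensional fills in a step the paper leaves implicit.
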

\begin{proof}
Since $N_{p-1}^{\ab}$ is an $\R_{p-1}$-module, the quotient
$M=N_{p-1}^{\ab}/pN_{p-1}^{\ab}$ is a module over $\R_{p,p-1}$.  Projecting
the cyclic submodules of
Lemma~\ref{lem:ppminus1-mod} gives $V=\rho(\ab_{N_{p-1}}([F,F]))$, cyclic on
$\eps=\rho\circ\iota(\theta)$, and $W=\rho(\ab_{N_{p-1}}(C_H))$, cyclic on
$\rho\circ\iota(g\cdot\theta)$.

It remains to identify $V$ and to check that $\Sigma_1\Sigma_2$ is the single
defining relation. In $[F,F]^{\ab}$ the following identities hold for $n\ge1$:
\[
\overline{[u^n,v]}=\Bigl(\sum_{r=0}^{n-1}\kappa_F(\ab_F(u))^r\Bigr)\cdot\overline{[u,v]},\qquad
\overline{[u,v^n]}=\Bigl(\sum_{r=0}^{n-1}\kappa_F(\ab_F(v))^r\Bigr)\cdot\overline{[u,v]}.
\]
The identity $[ab,c]=[a,c]^b[b,c]$ gives
$[u^n,v]=[u^{n-1},v]^u\,[u,v]$.  Writing $\eta_n=\overline{[u^n,v]}$, we obtain
the recursion $\eta_n=\kappa_F(\ab_F(u))\cdot\eta_{n-1}+\overline{[u,v]}$,
$\eta_1=\overline{[u,v]}$, which recovers the first identity displayed above.
The identity for
$\overline{[u,v^n]}$ follows by symmetry via $[a,bc]=[a,c][a,b]^c$.  Taking $u=x_1$, $v=x_2$,
$n=p-1$, we obtain
\[
\overline{[x_1^{p-1},x_2^{p-1}]}
=\sum_{i=0}^{p-2}\sum_{j=0}^{p-2}t_1^i t_2^j\cdot\theta
=\Sigma_1\Sigma_2\cdot\theta .
\]
As $[x_1^{p-1},x_2^{p-1}]\in[N_{p-1},N_{p-1}]$, its class
$\ab_{N_{p-1}}([x_1^{p-1},x_2^{p-1}])$ vanishes, and by the $\R$-linearity of $\iota$,
\[
\Sigma_1\Sigma_2\cdot\eps
=\rho\bigl(\iota(\Sigma_1\Sigma_2\cdot\theta)\bigr)=\rho\bigl(\ab_{N_{p-1}}([x_1^{p-1},x_2^{p-1}])\bigr)=0.
\]
So $\beta:\R_{p,p-1}\to V$, $r\mapsto r\cdot\eps$, is a surjection with $J=(\Sigma_1\Sigma_2)\subseteq\ker\beta$.

We show $\ker\beta=J$ by comparing dimensions over $\F_p$.  By
Corollary~\ref{cor:JSigma},
\[
\R_{p,p-1}/J\;\cong\prod_{\zeta\in(\F_p^\times)^2\setminus\{(1,1)\}}\F_p ,
\qquad \dim_{\F_p}\R_{p,p-1}/J=(p-1)^2-1 .
\]

On the other hand $N_{p-1}$ has index $(p-1)^2$ in $F$, so  it
is free of rank $(p-1)^2+1$, whence $\dim_{\F_p}M=(p-1)^2+1$.  By
Lemma~\ref{lem:decompAb}\ref{dab:N},
$M=\rho(\ab_{N_{p-1}}(\langle h_1^{p-1},h_2^{p-1}\rangle))\oplus V$, and the first
summand is two-dimensional, so
\[
\dim_{\F_p}V=(p-1)^2-1=\dim_{\F_p}\R_{p,p-1}/J .
\]
As $J\subseteq\ker\beta$, the induced map
$\R_{p,p-1}/J\to V$ is an isomorphism, giving
$V\cong\R_{p,p-1}/(\Sigma_1\Sigma_2)$.
\end{proof}

\begin{thm}
\label{thm:p-part}
Let $H\le F$ be free of rank two with $A_H=F^{\ab}$, with $I_H=(g)$ as in
Proposition~\ref{prop:principal}.  Then $S=H\cap N_{p-1}$ is $\Gp$-dense in
$N_{p-1}$ if and only if $g$ has no zero in $(\F_p^\times)^2$.
\end{thm}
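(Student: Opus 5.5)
By \eqref{eq:WV}, $S$ is $\Gp$-dense in $N_{p-1}$ if and only if $W=V$. The plan is to make both modules explicit through the identification of Lemma~\ref{lem:VW}. Let $\beta\colon\R_{p,p-1}\to V$, $r\mapsto r\cdot\eps$. This map induces an isomorphism $\R_{p,p-1}/J\cong V$ with $J=(\Sigma_1\Sigma_2)$. Composing with the evaluation isomorphism of Corollary~\ref{cor:JSigma}, $V$ becomes the $\F_p$-algebra
\[
\prod_{\zeta\in(\F_p^\times)^2\setminus\{(1,1)\}}\F_p,
\]
and $\eps$ corresponds to the element with every coordinate equal to $1$.

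Next I locate the generator of $W$. By Lemma~\ref{lem:VW} it is $\rho\circ\iota(g\cdot\theta)$. Since $\iota$ is $\R$-linear (Lemma~\ref{lem:iota-linear}) and $\rho$ respects the module structure, this element equals $\bar g\cdot\eps=\beta(\bar g)$, where $\bar g$ is the image of $g$ in $\R_{p,p-1}$. Hence $W=\bar g\cdot V$. Under the product decomposition, $W$ is the principal ideal generated by the tuple $(g(\zeta))_{\zeta\ne(1,1)}$.

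In a finite product of fields, the ideal generated by a tuple is the product of the factors at the coordinates where the tuple is nonzero. So $W=V$ if and only if $g(\zeta)\ne0$ for every $\zeta\in(\F_p^\times)^2\setminus\{(1,1)\}$. The excluded point causes no trouble: $g(1,1)=1$ in $\F_p$ by Proposition~\ref{prop:principal}, so it is never a zero. The condition is therefore exactly that $g$ has no zero on $(\F_p^\times)^2$, which is the statement of the theorem. For $p=2$ the torus is $\{(1,1)\}$ and $V=W=0$, and both sides of the equivalence hold trivially.

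Step two is the only point needing care, namely checking that $\rho\circ\iota(g\cdot\theta)=\bar g\cdot\eps$ holds in the $\R_{p,p-1}$-module structure. The $\R$-action on $M$ factors through $\R_{p,p-1}$, so this identity follows directly from linearity. The remaining arguments are bookkeeping via Lemma~\ref{lem:crt} and Corollary~\ref{cor:JSigma}.
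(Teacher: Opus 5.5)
Your proposal is correct and follows essentially the same route as the paper. It reduces to $W=V$ via \eqref{eq:WV}, transports $V$ to $\prod_{\zeta\ne(1,1)}\F_p$ using Lemma~\ref{lem:VW} and Corollary~\ref{cor:JSigma}, identifies $W$ with the ideal generated by the tuple $(g(\zeta))_\zeta$ (the paper's subspace $\prod_{\zeta}\F_p\,g(\zeta)$), and uses $g(1,1)=1$ to cover the excluded point, with your $p=2$ check matching the paper's remark on the degenerate case.
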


\begin{proof}
By \eqref{eq:WV} and Lemma~\ref{lem:pdense}, $S$ is $\Gp$-dense in $N_{p-1}$
(equivalently $\Cl_{\Gp}(S,N_{p-1})=N_{p-1}$) if and only if $W=V$.

Let $\eps=\rho\circ\iota(\theta)$, and let $\omega=\rho\circ\iota(g\cdot\theta)=g\cdot \eps$ be the generator of $W$ furnished by
Lemma~\ref{lem:VW}.  Under the isomorphism \eqref{eq:comm-part} of that
lemma,
\[
V\;\cong\;\R_{p,p-1}/(\Sigma_1\Sigma_2)\;\cong\;\prod_{\zeta\in(\F_p^\times)^2,\ \zeta\neq(1,1)}\F_p ,
\qquad
r\cdot\eps\;\longmapsto\;\bigl(r(\zeta)\bigr)_{\zeta},
\]
the cyclic $\R_{p,p-1}$-submodule $W$ generated by $\omega$ maps onto
\[
\bigl\{\bigl(r(\zeta)\,g(\zeta)\bigr)_{\zeta}:\ r\in\R_{p,p-1}\bigr\},
\]
the entries being multiplied one by one because each $\mathrm{ev}_\zeta$ is a
ring homomorphism.  Since by Lemma~\ref{lem:crt} the values $\bigl(r(\zeta)\bigr)_{\zeta}$, $r$
ranging over $\R_{p,p-1}$, already exhaust $\prod_{\zeta\ne(1,1)}\F_p$, the
products $r(\zeta)g(\zeta)$ range independently over the line $\F_p g(\zeta)$ in
each coordinate $\zeta$.  Hence the image is the subspace
$\prod_{\zeta\ne(1,1)}\F_p\,g(\zeta)$, of dimension
$\#\{\zeta\neq(1,1):\ g(\zeta)\neq0\}$.  Therefore
\[
W=V\ \Longleftrightarrow\ g(\zeta)\neq0\quad\text{for every }
\zeta\in(\F_p^\times)^2,\ \zeta\neq(1,1).
\]
Since $g(1,1)=1\neq0$ by Proposition~\ref{prop:principal}, requiring
$g(\zeta)\neq0$ only at $\zeta\neq(1,1)$ already means that $g$ has no zero in
$(\F_p^\times)^2$.  Together with the first equivalence this proves the theorem.
\end{proof}

\begin{thm}
\label{thm:main}
Let $H\le F$ be free of rank two with $A_H=F^{\ab}$, with $I_H=(g)$ as in
Proposition~\ref{prop:principal}.  Then
\[
\mathfrak D(H)=\PP\setminus\PP(g),
\]
where $\PP$ is the set of all primes and $\PP(g)=\{p:\ \exists\,\zeta\in(\F_p^\times)^2,\ g(\zeta)=0\}$.
\end{thm}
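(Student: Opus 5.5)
The plan is to combine Lemma~\ref{lem:decomp} with Theorem~\ref{thm:p-part}. Fix a prime $p$. By Lemma~\ref{lem:decomp}, $H$ is $\Hp$-dense in $F$ if and only if two conditions hold: $H$ is $\Abp$-dense in $F$, and $S=H\cap N_{p-1}$ is $\Gp$-dense in $N_{p-1}$. I will show that the first condition holds for every $p$ under the standing hypothesis $A_H=F^{\ab}$. Theorem~\ref{thm:p-part} identifies the second condition exactly with ``$g$ has no zero in $(\F_p^\times)^2$'', that is, with $p\notin\PP(g)$. Taking the set of all such $p$ then gives $\mathfrak D(H)=\PP\setminus\PP(g)$.

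For the $\Abp$-density, take any normal $K\trianglelefteq F$ with $F/K\in\Abp$. Since $F/K$ is abelian, $[F,F]\subseteq K$, and since it has exponent dividing $p-1$, $F^{p-1}\subseteq K$. Hence $N_{p-1}=[F,F]F^{p-1}\subseteq K$ by \eqref{eq:Nk}. So it suffices to show $HN_{p-1}=F$, which is immediate. Indeed, $F/N_{p-1}\cong\Z_{p-1}^2$ is the reduction of $F^{\ab}$, and $H$ maps onto $F^{\ab}$ because $A_H=F^{\ab}$, so $H$ maps onto $F/N_{p-1}$. Therefore $HK\supseteq HN_{p-1}=F$ for every such $K$, which is the definition of $\Abp$-density. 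Alternatively, one can use the normalized basis directly: $h_1,h_2$ map to the generators $e_1,e_2$ modulo $p-1$.

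For $p=2$ everything degenerates consistently. $\Ab_1$ is trivial, $N_1=F$ and $S=H$, and Theorem~\ref{thm:p-part} was noted to hold in this degenerate form. Here $(\F_2^\times)^2=\{(1,1)\}$ and $g(1,1)=1$, so $2\notin\PP(g)$. This agrees with the fact that $H$ is $\mathbf G_2$-dense, since $A_H=F^{\ab}$ gives $\ab_F(H)+2F^{\ab}=F^{\ab}$ by Lemma~\ref{lem:pdense}.

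There is no real obstacle, since the substantive work is already done in Theorem~\ref{thm:p-part}. The only point needing care is the hypothesis of Lemma~\ref{lem:decomp}: its density condition for $H\cap N_{p-1}$ is stated in the intrinsic pro-$\Gp$ topology of $N_{p-1}$, and this is the same notion, $\Cl_{\Gp}(S,N_{p-1})=N_{p-1}$, as in Theorem~\ref{thm:p-part}. $H$ is finitely generated, so the lemma applies.
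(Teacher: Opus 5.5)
Your proposal is correct and follows the paper's proof exactly. Lemma~\ref{lem:decomp} splits $\Hp$-density into two parts: $\Abp$-density, which is automatic from $A_H=F^{\ab}$, and $\Gp$-density of $H\cap N_{p-1}$ in $N_{p-1}$, which Theorem~\ref{thm:p-part} identifies with $p\notin\PP(g)$; your check that $N_{p-1}\subseteq K$ whenever $F/K\in\Abp$ spells out the paper's one-line remark that such quotients factor through $F^{\ab}$.
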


\begin{proof}
By Definition~\ref{defn:D}, $p\in\mathfrak D(H)$ if and only if $H$ is
$\Hp$-dense in $F$.  By Lemma~\ref{lem:decomp}, $H$ is $\Hp$-dense in $F$ if and
only if $H$ is $\Abp$-dense in $F$ and $S=H\cap N_{p-1}$ is $\Gp$-dense in
$N_{p-1}$.  Since $A_H=\ab_F(H)=F^{\ab}$, the image of $H$ in every
exponent-$(p-1)$ abelian quotient of $F$ (which factors through $F^{\ab}$) is
all of it, so $H$ is $\Abp$-dense in $F$ for every prime $p$.  By
Theorem~\ref{thm:p-part}, $S$ is then $\Gp$-dense in $N_{p-1}$ if and only if $g$
has no zero in $(\F_p^\times)^2$.  Hence
\[
p\in\mathfrak D(H)\ \Longleftrightarrow\ g\ \text{has no zero in }(\F_p^\times)^2
\ \Longleftrightarrow\ p\notin\PP(g),
\]
so $\mathfrak D(H)=\PP\setminus\PP(g)$.
\end{proof}

The theorem of the next section computes $\PP(g)$
 from \emph{finite} data.  Combining Theorem~\ref{thm:main} with
Proposition~\ref{prop:realize}, which
says that $g$ ranges over all of $\{g\in\R: g(1,1)=1\}$, identifies the
possible $\mathfrak D(H)$ exactly:
\begin{cor}
\label{cor:which-sets}
The sets of primes arising as $\mathfrak D(H)$, for $H\le F$ free of rank
two with $A_H=F^{\ab}$, are precisely the sets
$\PP\setminus\PP(g)$ with $g\in\R$ and $g(1,1)=1$.
\end{cor}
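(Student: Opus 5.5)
The corollary asserts an equality of two families of prime sets, so I will prove the two inclusions separately, using only Theorem~\ref{thm:main} and Proposition~\ref{prop:realize}.

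For the first inclusion, let $H\le F$ be free of rank two with $A_H=F^{\ab}$. By Lemma~\ref{lem:normalized-basis} I choose a normalized basis $h_1,h_2$ of $H$. Proposition~\ref{prop:principal} then gives $g\in\R$ with $\ab_{[F,F]}([h_1,h_2])=g\cdot\theta$, $I_H=(g)$ and $g(1,1)=1$. Theorem~\ref{thm:main} yields $\mathfrak D(H)=\PP\setminus\PP(g)$, so $\mathfrak D(H)$ has the required form.

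One point needs care: $\PP(g)$ should depend only on the ideal $I_H$, not on the chosen generator $g$. By Remark~\ref{rem:g-depends-on-basis}, any two generators differ by a unit $\pm t_1^at_2^b$ of $\R$. Such a unit is nonzero at every point of $(\F_p^\times)^2$, so the zero sets on the torus agree and $\PP(g)$ is well defined. This observation is not strictly needed, since we fixed a particular $g$, but it makes the statement independent of choices.

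For the reverse inclusion, let $g\in\R$ with $g(1,1)=1$. Proposition~\ref{prop:realize} produces $H=\langle x_1v_1,x_2v_2\rangle$, free of rank two with $A_H=F^{\ab}$, whose normalized basis $h_i=x_iv_i$ satisfies $\ab_{[F,F]}([h_1,h_2])=g\cdot\theta$. Applying Theorem~\ref{thm:main} to $H$ with this same $g$ gives $\mathfrak D(H)=\PP\setminus\PP(g)$.

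There is no real obstacle here. The only point needing a remark is that Theorem~\ref{thm:main} is applied with exactly the element $g$ from the hypothesis, and that element is the one attached to the normalized basis in Proposition~\ref{prop:realize}.
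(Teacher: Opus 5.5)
Your proposal is correct and is essentially the paper's own argument. The paper also just combines Theorem~\ref{thm:main} (applied to the $g$ of a normalized basis from Proposition~\ref{prop:principal}, which satisfies $g(1,1)=1$) with Proposition~\ref{prop:realize} (every $g\in\R$ with $g(1,1)=1$ is realized by such an $H$), and these are exactly your two inclusions. Your side remark that $\PP(g)$ depends only on the ideal $(g)$ is harmless; the paper notes the same thing at the start of Section~\ref{sec:dec}.
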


\begin{ex}
Take $g=t_1^2-t_1+1$, so that $g-1=(1-t_1)(-t_1)$ and
Proposition~\ref{prop:realize} applies with $s_2=-t_1$, $s_1=0$,
$v_1=1$, $v_2=\bigl([x_1,x_2]^{x_1}\bigr)^{-1}=[x_2,x_1]^{x_1}$:
\[
H=\bigl\langle x_1,\ x_2\,[x_2,x_1]^{x_1}\bigr\rangle,
\qquad I_H=(t_1^2-t_1+1).
\]
A zero of $g$ on $(\F_p^\times)^2$ is a root $\zeta_1\in\F_p$ of
$x^2-x+1$, necessarily nonzero since the constant term is $1$, together with
an arbitrary $\zeta_2\in \F_p^\times$.  For odd $p$ such a root exists if and only if the
discriminant $-3$ is a square in $\F_p$, that is if and only if $p=3$ or
$p\equiv1\bmod3$; for $p=2$ the polynomial reduces to $t_1^2+t_1+1$, which has
no root in $\F_2$.  Hence, by Theorem~\ref{thm:main},
\[
\mathfrak D(H)=\{p:\ p\equiv2\bmod 3\}.
\]
\end{ex}

\section{Deciding \texorpdfstring{$\mathfrak D(H)$}{D(H)}}
\label{sec:dec}
In this section we compute
\[
\PP(g)=\{p:\ \exists\,\zeta\in(\F_p^\times)^2,\ g(\zeta)=0\}
\]
for a fixed element $g\in\R=\Z[t_1^{\pm1},t_2^{\pm1}]$ with
$g(1,1)=1$.

Since a monomial vanishes nowhere on $(\F_p^\times)^2$ and takes the
value $1$ at $(1,1)$, the set $\PP(g)$ depends only on the ideal $(g)$,
not on its representative.  Replacing $g$ by a normalized generator (as in
Remark~\ref{rem:g-depends-on-basis}) changes neither $\PP(g)$ nor the
assumption $g(1,1)=1$, so we may assume that
\begin{equation}
\label{eq:norm-g}
g\in \Z[t_1,t_2],\quad t_1\nmid g,\ t_2\nmid g,\quad g(1,1)=1 .
\end{equation}
For the rest of this section $g$ denotes such a normalized polynomial.  If
$g$ is constant then the normalization forces $g=1$, so $\PP(g)=\emptyset$; we assume from now on that
$D=\deg g\ge1$.

\begin{thm}
\label{thm:torus}
Given a normalized $g$, there is an algorithm producing a finite Galois extension $L/\Q$, a
conjugacy-stable subset $A_g\subseteq G=\Gal(L/\Q)$, and a finite,
explicitly enumerated set $\mathcal S(g)$ of primes, such that every $p\notin \mathcal S(g)$ is
unramified in $L/\Q$ and satisfies
\begin{equation}
p\in\PP(g)\quad\Longleftrightarrow\quad
\Frob_{\mathfrak p}\in A_g\ \text{ for some (equivalently every) prime
$\mathfrak p$ of $\cO_L$ above $p$.}
\label{eq:star}
\end{equation}
\end{thm}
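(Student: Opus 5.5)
The plan is to reduce the question to the geometry of the plane curve $X_g=\{g=0\}$ over $\overline{\Q}$ and to read off $\PP(g)$ from how $\Frob_p$ acts on its geometric components and on a finite set of special points.

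\textbf{Step 1: components.} Factor $g$ over $\Q$, and each $\Q$-irreducible factor further over $\overline{\Q}$ into absolutely irreducible factors $g_1,\dots,g_m$. This is algorithmic, since absolute factorization can be computed. Let $K$ be the number field generated by all their coefficients. Since $t_1\nmid g$ and $t_2\nmid g$, no component is a coordinate axis, so each $X_{g_i}$ meets the torus $\mathbb G_m^2$ in a dense open set. Let $Z$ be the finite set of points of $\mathbb G_m^2(\overline{\Q})$ lying on two distinct components $X_{g_i}\cap X_{g_j}$; these are computed by resultants. Take $L$ to be the Galois closure of $K$ adjoined the coordinates of all points of $Z$. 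Then $G=\Gal(L/\Q)$ permutes both $\{g_i\}$ up to scalars and $Z$. Define
\[
A_g=\{\sigma\in G:\ \sigma\text{ fixes some }(g_i)\ \text{or fixes some point of }Z\}.
\]
This set is visibly conjugacy-stable.

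\textbf{Step 2: good reduction.} Let $\mathcal S(g)$ consist of the following primes:
\begin{itemize}[label=$\circ$]
\item primes ramified in $L$;
\item primes dividing denominators or leading data of the $g_i$, or the discriminants and resultants used to separate the points of $Z$ and to keep their coordinates units;
\item primes where some $g_i$ fails to stay absolutely irreducible mod $\mathfrak p$, detected by the effective Ruppert criterion under reduction (Appendix~\ref{app:ruppert});
\item all $p$ below an explicit Lang--Weil threshold, e.g.\ $p\le cD^4$.
\end{itemize}
For $p\notin\mathcal S(g)$ and $\mathfrak p\mid p$, the reductions $\bar g_i$ are the absolutely irreducible factors of $\bar g$. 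The reduction map identifies $Z$ with the pairwise intersection points of the $\bar g_i$ in the torus, and $\Frob_{\mathfrak p}$ acts on the reduced components and points exactly as it acts on $\{g_i\}$ and $Z$. Both conditions are independent of the choice of $\mathfrak p$, since changing $\mathfrak p$ conjugates $\Frob_{\mathfrak p}$.

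\textbf{Step 3: the equivalence \eqref{eq:star}.}
\begin{itemize}[label=$\circ$]
\item If $\Frob_{\mathfrak p}$ fixes some $(g_i)$, then $\bar g_i$ is defined over $\F_p$ and absolutely irreducible. Effective Lang--Weil gives $\#X_{\bar g_i}(\F_p)\ge p+1-(D-1)(D-2)\sqrt p-O(D)$. At most $2D$ of these points lie on the axes, so a point in $(\F_p^\times)^2$ exists once $p$ exceeds the threshold.
\item If $\Frob_{\mathfrak p}$ fixes a point of $Z$, its reduction is an $\F_p$-point of the torus on $X_{\bar g}$.
\item Conversely, let $\zeta\in(\F_p^\times)^2$ with $g(\zeta)=0$. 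Then $\zeta$ lies on some $\bar g_i$ and also on its Frobenius conjugate $\bar g_i^{\Frob}$. Either these coincide, so Frobenius fixes $(g_i)$, or $\zeta$ lies in a pairwise intersection. In the latter case $\zeta$ is the reduction of a point of $Z$, and since $\zeta$ is $\F_p$-rational and reduction on $Z$ is injective and Frobenius-equivariant, that point is fixed by $\Frob_{\mathfrak p}$.
\end{itemize}

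\textbf{Main obstacle.} I expect the hardest part to be Step 2: making the good-reduction set explicit and finite. This requires controlling, uniformly in $\mathfrak p$, that absolute irreducibility persists and that $Z$ reduces without collisions or coordinates becoming $0$. I would handle irreducibility through Ruppert's criterion, a linear-algebra condition whose failure is cut out by finitely many minors with nonzero content, so only finitely many primes are excluded. Collisions and degeneration of $Z$ I would handle through a radical/resultant certificate as in Appendix~\ref{app:radical}, since its nonvanishing discriminant bounds the bad primes.
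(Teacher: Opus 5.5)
Your proposal is correct and follows the paper's proof essentially step for step. It uses the same field $L$ (coefficients of the absolute factors together with the coordinates of the multiple torus points) and the same $A_g$. It controls bad primes the same way, through Ruppert's criterion under reduction and radical certificates for the intersection points, and it uses the same Weil-bound threshold of order $D^4$. The only difference is cosmetic: you argue the converse directly (an $\F_p$-point on $\bar g_i$ also lies on $\bar g_{\sigma(i)}$) and skip removing other components in the forward direction, whereas the paper packages both through the disjoint decomposition into the $\bar U_i$ and $\bar\Omega$.
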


Here $\cO_L$ is the ring of integers of $L$.  For a prime $\mathfrak p$ of
$\cO_L$ above $p$, the Frobenius $\Frob_{\mathfrak p}\in G$ is the unique
element with $\Frob_{\mathfrak p}(\alpha)\equiv\alpha^{p}\pmod{\mathfrak p}$
for all $\alpha\in\cO_L$; it is defined exactly when $\mathfrak p$ is
unramified, as it is whenever $p\notin \mathcal S(g)$.  The equivalence of
``for some'' with ``for every'' follows from the transitivity of $G$ on the
primes above $p$: $\Frob_{\tau\mathfrak p}=\tau\Frob_{\mathfrak p}\tau^{-1}$
for $\tau\in G$, and $A_g$ is conjugacy-stable.

The rest of this section proves the theorem.  Subsection~\ref{sec:fix}
constructs the field $L$ and the ring $R=\cO_L[1/N]$ and records in
Lemma~\ref{lem:model} the properties of that model used later;
Subsection~\ref{sec:count} turns them into finite-field estimates; and
Subsection~\ref{sec:assem} assembles the theorem.

\subsection{Fixing the model}
\label{sec:fix}

We fix the convention that a
zero set is taken \emph{geometrically}: over an algebraic closure
of the field of coefficients.  Thus if $\mathcal{K}$ is a field with a chosen
algebraic closure $\overline {\mathcal{K}}$ and $T\subseteq \mathcal{K}[t_1,\dots,t_r]$ is any set of
polynomials, we write  $Z(T)$ for the zero set of $T$ in the affine space $\mathbb{A}^r(\overline {\mathcal{K}})$  over $\overline {\mathcal{K}}$, namely
\[
Z(T)=\bigl\{z\in\overline {\mathcal{K}}^{\,r}:\ f(z)=0\ \text{for all }f\in T\bigr\}
\subseteq\mathbb A^r(\overline {\mathcal{K}}),
\]
abbreviating $Z(f_1,\dots,f_s)=Z(\{f_1,\dots,f_s\})$; this set depends
only on the ideal generated by $T$, so we write $Z(\mathfrak a)$ for an
ideal $\mathfrak a$ as well.  For a subfield $\mathcal{K}_0\subseteq\overline {\mathcal{K}}$ and a
subset $X\subseteq\overline {\mathcal{K}}^{\,r}$ we write
\[
X(\mathcal{K}_0)=X\cap \mathcal{K}_0^{\,r}
\]
for the set of points of $X$ with all coordinates in $\mathcal{K}_0$, so that
$Z(T)=Z(T)(\overline {\mathcal{K}})$.  For the finite fields of characteristic $p$
occurring below we always take $\overline{\F_p}$ as the algebraic closure.

Throughout we work in a fixed algebraic closure $\overline\Q$: all data
are computed in $\overline\Q$, and every number that occurs denotes a
specific algebraic number.

Fix the graded lexicographic order on the monomials
$t_1^at_2^b$ (compare $a+b$ first, then $a$); the \emph{leading
coefficient} of a nonzero polynomial is the coefficient of its largest
monomial in this order.  A unit of $\overline\Q[t_1,t_2]$ being a nonzero
constant, two polynomials with leading coefficient $1$ are associate only
if they are equal.

By \cite{Duval91}, one can
compute a \emph{normalized absolute} factorization
\begin{equation}
g=c\prod_{i\in\mathcal I}h_i^{n_i}
\label{ap:fact}
\end{equation}
whose coefficients lie in a computable subfield of $\overline\Q$, where
the $h_i$ are pairwise nonassociate,
absolutely irreducible, with leading coefficient $1$, and
$c\in\overline\Q^\times$.  Since $t_1,t_2\nmid g$, no $h_i$ is a
monomial.  Write $Z_i=Z(h_i)$ for the absolute irreducible components of
$Z(g)$, and let $\delta_i=\deg h_i$.

For $i<j$ the polynomials $h_i$ and $h_j$ are nonassociate irreducibles of
the unique factorization domain $\overline\Q[t_1,t_2]$, hence coprime, so
the plane curves $Z_i$ and $Z_j$ share no component and meet in at most
$\delta_i\delta_j$ points by B\'ezout's theorem \cite{Fulton}, each with algebraic
coordinates.  Let
\[
\Omega_{ij}=Z(h_i,h_j)\cap(\overline\Q^\times)^2
\]
be the points among them in the torus $(\overline\Q^\times)^2$, and let
\[
\Omega=\bigcup_{i<j}\Omega_{ij}
=\bigl\{z\in(\overline\Q^\times)^2:\ h_i(z)=h_j(z)=0
\ \text{for some }i\ne j\bigr\}
\]
be the finite set of multiple-component torus points of $Z(g)$.  Let
\[
U_i=\bigl(Z_i\cap(\overline\Q^\times)^2\bigr)\setminus\bigcup_{j\ne i}Z_j .
\]
Every torus point of $Z(g)$ lies on some $Z_i$ and, if on several,
belongs to $\Omega$; hence disjointly
\begin{equation}
Z(g)\cap(\overline\Q^\times)^2
=\Bigl(\bigsqcup_{i\in\mathcal I}U_i\Bigr)\sqcup\Omega .
\label{ap:decomp}
\end{equation}

\subsubsection{The field \texorpdfstring{$L$}{L}.}\label{sec:l}  Let
\[
L=\Bigl(\Q\bigl(\{\text{coefficients of the }h_i\}\cup\{c\}\cup
\{\text{coordinates of the points of }\Omega\}\bigr)
\Bigr)^{\mathrm{gal}} .
\]
be the Galois closure over $\Q$ of the field generated by the coefficients
of the $h_i$, the constant $c$, and the coordinates of the points of
$\Omega$.  As that field is finite over $\Q$, $L/\Q$ is a finite Galois
extension.  Moreover, we  have $h_i\in L[t_1,t_2]$, $c\in L^\times$, and
$\Omega\subset(L^\times)^2$.

Let $G=\Gal(L/\Q)$.  Every $\sigma\in G$ fixes the rational numbers, so
$\sigma(g)=g$.  Applied to \eqref{ap:fact}, $\sigma$ gives a second
absolute irreducible decomposition of the same polynomial, with factors
$\sigma(h_i)$ of the same degrees.  The multiset of absolutely irreducible
factors of $g$, with multiplicities, is unique; hence for some
$j\in\mathcal I$ the polynomial $\sigma(h_i)$ is associate to $h_j$.  As
both are monic, $\sigma(h_i)=h_j$.  Thus $\sigma$ permutes $\mathcal I$;
write $i\mapsto\sigma(i)$, so that $\sigma(h_i)=h_{\sigma(i)}$ and
$n_{\sigma(i)}=n_i$.  This makes $\mathcal I$ a $G$-set.

The set $\Omega$ is a $G$-set as well.  The coefficients of the $h_i$ and
the coordinates of the points of $\Omega$ lie in $L$, so every $\sigma\in
G$ acts on both; and as $\sigma$ is a ring homomorphism, evaluation commutes
with it,
\[
\bigl(\sigma h_i\bigr)(\sigma z)=\sigma\bigl(h_i(z)\bigr) .
\]
Let $z\in\Omega$, say $h_i(z)=h_j(z)=0$ with $i\ne j$.  Applying $\sigma$
then gives $h_{\sigma(i)}(\sigma z)=0$ and $h_{\sigma(j)}(\sigma z)=0$.
Here $\sigma(i)\ne\sigma(j)$, because $\sigma$ permutes $\mathcal I$; and
$\sigma$ carries $L^\times$ into $L^\times$, so $\sigma z$ is a torus
point.  Hence $\sigma z\in\Omega$, so $\sigma(\Omega)\subseteq\Omega$.
Replacing $\sigma$ by $\sigma^{-1}$ gives the reverse inclusion; hence
$\sigma(\Omega)=\Omega$.

\begin{rem}
\label{rem:omega-calc}
The points of $\Omega$ and the field $L$ are computable, everything being
a computation with algebraic numbers.  The absolute
factorization \eqref{ap:fact} supplies the field
$L_0=\Q(\{\text{coefficients of the }h_i\}\cup\{c\})$.  For $i<j$ the
ideal $(h_i,h_j)\subset L_0[t_1,t_2]$ is zero-dimensional, so a Gr\"obner
computation \cite{GG} returns the points of $Z_i\cap Z_j$; deleting the
axial ones gives $\Omega_{ij}$ and $\Omega$.  The field $L$ of
Subsection~\ref{sec:l} is the Galois closure over $\Q$ of
$L_0\bigl(\{\text{coordinates of the points of }\Omega\}\bigr)$, and the
algorithms of \cite{Cohen} compute a primitive element $\beta$ with
$L=\Q(\beta)$ and the Galois group $G=\Gal(L/\Q)$.  Expressing the
coefficients of the $h_i$ and the coordinates of $\Omega$ in the power
basis $1,\beta,\dots,\beta^{[L:\Q]-1}$ of $L$, every $\sigma\in G$ acts
on $\mathcal I$ ($\sigma(h_i)=h_{\sigma(i)}$) and on $\Omega$ by explicit
permutations.
\end{rem}

\subsubsection{The ring \texorpdfstring{$R=\cO_L[1/N]$}{R=OL[1/N]}}

For $\alpha\in L^\times$ write
$\alpha=a_\alpha/b_\alpha$ with
$a_\alpha,b_\alpha\in\cO_L\setminus\{0\}$ and let
\[
\nu(\alpha)=\bigl|N_{L/\Q}(a_\alpha)\,N_{L/\Q}(b_\alpha)\bigr|
\in\Z_{\ge1} ;
\]
Inverting $\nu(\alpha)$ makes $\alpha$ a \emph{unit}. For $a\in\cO_L\setminus\{0\}$ one has
$N_{L/\Q}(a)=\prod_{\sigma\in\Gal(L/\Q)}\sigma(a)$ because $L/\Q$ is Galois,
whence $N_{L/\Q}(a)/a=\prod_{\sigma\ne1}\sigma(a)\in\cO_L$ and $a$ divides
$N_{L/\Q}(a)$ in $\cO_L$.  Hence in $\cO_L[1/\nu(\alpha)]$ both
$a_\alpha$ and $b_\alpha$ divide a unit, so $\alpha$ and
$\alpha^{-1}$ are there both integral.  For a finite subset
$\mathcal A\subset L^\times$ we abbreviate
$\nu(\mathcal A)=\prod_{\alpha\in\mathcal A}\nu(\alpha)$. Define
\[
N=\bigl|\mathrm{disc}(L/\Q)\bigr|\cdot\prod_{1\leq i\leq 5}\nu(\mathcal{A}_i) ,
\]
where $\mathrm{disc}(L/\Q)$ is the discriminant of $L$ over $\Q$ and the
five finite subsets $\mathcal A_i\subset L^\times$ are the following:
\begin{enumerate}[label=$\mathcal{A}_{\arabic*}$, ref=$\mathcal{A}_{\arabic*}$]
\item $=\{\text{nonzero coefficients of the
}h_i\}\cup\{c\}$, the data of the factorization \eqref{ap:fact};
\item $=\{\gamma_{ij}\}_{i\ne j}$, where $\gamma_{ij}$ is
a nonzero coefficient of $h_i-h_j$ --- there is one, the $h_i$ being
pairwise distinct;
\item\label{A3} $=\{a_z,b_z\}_{z\in\Omega}\;\cup\;\bigl(\{a_z-a_w,\
b_z-b_w:\ z\ne w\ \text{in }\Omega\}\setminus\{0\}\bigr)$, the
coordinates of the points of $\Omega$ together with the nonzero
coordinate-differences of distinct points;
\item $=\bigcup_{i\in\mathcal I}\mathcal A(h_i)$, the sets furnished by
Proposition~\ref{prop:red-irr} of Appendix~\ref{app:ruppert}; they keep
each $\bar h_i$ absolutely irreducible;
\item $=$ the set of nonzero coefficients of the cofactor
polynomials of the certificates produced in the paragraph
\emph{Certificates at the multiple-component points} below.
\end{enumerate}

\emph{Certificates at the multiple-component points.}
We carry $\Omega_{ij}$ to characteristic $p$ by means of an ideal.  The
ideal $(h_i,h_j)$ is unsuitable: its zero set includes the points of
$Z_i\cap Z_j$ on the axes, which are excluded from $\Omega_{ij}$.  Therefore we adjoin a variable $u$ subject to $t_1t_2u=1$ and let,
for $i<j$,
\[
\mathfrak L_{ij}=(h_i,\ h_j,\ t_1t_2u-1)\subset L[t_1,t_2,u].
\]
 Here $h_i$ and $h_j$ are read as polynomials in $L[t_1,t_2,u]$ through
the inclusion $L[t_1,t_2]\subset L[t_1,t_2,u]$, so they do not involve
$u$; in particular their support, bidegree and total degree are those
computed in $t_1,t_2$.  The symbol $Z(h_i,h_j)$ denotes the  zero set
in $\mathbb A^2(\overline{\Q})$, never the zero set in
$\mathbb A^3(\overline{\Q})$; zero sets in $\mathbb A^3(\overline{\Q})$ occur
only for $\mathfrak L_{ij}$, $\mathcal{J}_{ij}$, and the $\mathfrak m_z$
below.  Write
\[
\pr\colon\mathbb A^3(\overline{\Q})\longrightarrow\mathbb A^2(\overline{\Q}),\qquad
(t_1,t_2,u)\longmapsto(t_1,t_2) .
\]
Since $t_1t_2u=1$ forces $t_1,t_2\ne0$ and then determines $u$, the map
$\pr$ restricts to a bijection from $Z(\mathfrak L_{ij})$ onto $\Omega_{ij}$.  Thus
$\mathfrak L_{ij}$ presents $\Omega_{ij}$ as a closed subset of affine
$3$-space, accessible to ideal-theoretic computation and, in
Lemma~\ref{lem:model} below, to reduction modulo primes.  As $\Omega_{ij}\subset(L^\times)^2$ by the construction of $L$,
the points of $Z(\mathfrak L_{ij})$ have their coordinates in $L$ as well, that is
$Z(\mathfrak L_{ij})=Z(\mathfrak L_{ij})(L)$.  Writing $z=(a_z,b_z)$ for a point of
$\Omega_{ij}$,
so that $\bigl(a_z,b_z,(a_zb_z)^{-1}\bigr)$ is the point of $Z(\mathfrak L_{ij})$
above it, let
\[
\mathfrak m_z=\bigl(t_1-a_z,\ t_2-b_z,\ u-(a_zb_z)^{-1}\bigr)
\subset L[t_1,t_2,u].
\qquad
\mathcal{J}_{ij}=\prod_{z\in\Omega_{ij}}\mathfrak m_z ,
\]
 with the convention $\mathcal{J}_{ij}=(1)$ when $\Omega_{ij}=\emptyset$.
 Each $\mathfrak m_z$ is a maximal ideal
of $L[t_1,t_2,u]$ with $Z(\mathfrak m_z)$ the single point
$\bigl(a_z,b_z,(a_zb_z)^{-1}\bigr)$.

  Write
$\mathcal{J}_{ij}=(\varphi_1,\dots,\varphi_s)$, where the $\varphi_l$ are the
polynomials $\prod_{z\in\Omega_{ij}}g_z$ expanded explicitly, one $g_z$
chosen from the three generators
of each $\mathfrak m_z$.  The next lemma identifies $\mathcal{J}_{ij}$ with
the radical of $\mathfrak L_{ij}$; its proof is deferred to
Appendix~\ref{app:radical}.

\begin{lem}
\label{lem:radical}
One has
\[
\mathcal{J}_{ij}=\bigcap_{z\in\Omega_{ij}}\mathfrak m_z=\sqrt{\mathfrak L_{ij}}.
\]
\end{lem}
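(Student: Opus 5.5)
The plan is to prove the two equalities separately. For $\mathcal{J}_{ij}=\bigcap_{z\in\Omega_{ij}}\mathfrak m_z$, note that the ideals $\mathfrak m_z$ are maximal and pairwise distinct, since distinct $z$ give distinct points. They are therefore pairwise comaximal. The standard fact that a product of pairwise comaximal ideals equals their intersection, proved by induction using $\mathfrak a\mathfrak b=\mathfrak a\cap\mathfrak b$ whenever $\mathfrak a+\mathfrak b=(1)$, gives $\prod_z\mathfrak m_z=\bigcap_z\mathfrak m_z$. If $\Omega_{ij}=\emptyset$, both sides are $(1)$ by convention.

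For $\bigcap_z\mathfrak m_z=\sqrt{\mathfrak L_{ij}}$, I would use the Nullstellensatz over $\overline\Q$ and then descend to $L$.
\begin{enumerate}[label=(\arabic*)]
\item Over $\overline\Q$, the zero set $Z(\mathfrak L_{ij})$ is the finite set of lifts $\bigl(a_z,b_z,(a_zb_z)^{-1}\bigr)$, $z\in\Omega_{ij}$; this was established just before the lemma via the bijection $\pr$. Hence $\sqrt{\mathfrak L_{ij}\overline\Q[t_1,t_2,u]}$ is the ideal of this finite set, namely $\bigcap_z\overline{\mathfrak m}_z$, where $\overline{\mathfrak m}_z=\mathfrak m_z\overline\Q[t_1,t_2,u]$.
\item To descend, first check that $\bigcap_z\overline{\mathfrak m}_z\cap L[t_1,t_2,u]=\bigcap_z\mathfrak m_z$. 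This holds because each $\overline{\mathfrak m}_z\cap L[t_1,t_2,u]$ is a proper ideal containing the maximal ideal $\mathfrak m_z$; here we use that all coordinates of $z$ lie in $L$.
\item Next, show that $\sqrt{\mathfrak L_{ij}}=\sqrt{\mathfrak L_{ij}\overline\Q[\cdot]}\cap L[\cdot]$. The inclusion $\subseteq$ is clear. For $\supseteq$: if $f^n=\sum a_k\ell_k$ with $\ell_k$ the generators of $\mathfrak L_{ij}$ and $a_k\in\overline\Q[\cdot]$, apply an $L$-linear projection $\overline\Q\to L$, or equivalently pass to a finite extension and take a trace-type projection fixing $L$. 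This yields an expression with $a_k\in L[\cdot]$, using faithful flatness of $\overline\Q[\cdot]$ over $L[\cdot]$.
\end{enumerate}

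Alternatively, and more cleanly, I would avoid descent entirely. The ideals $\mathfrak m_z$ are exactly the maximal ideals of $L[t_1,t_2,u]$ containing $\mathfrak L_{ij}$: any maximal ideal $\mathfrak m\supseteq\mathfrak L_{ij}$ has a $\overline\Q$-point in its zero set, which is one of the lifts, so $\mathfrak m\subseteq\mathfrak m_z$ and equality holds by maximality. Moreover $L[t_1,t_2,u]/\mathfrak L_{ij}$ is zero-dimensional, because $Z(\mathfrak L_{ij})$ is finite. The radical of an ideal in a Jacobson ring equals the intersection of the maximal ideals containing it, which gives $\sqrt{\mathfrak L_{ij}}=\bigcap_z\mathfrak m_z$ directly.

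The main obstacle is exactly this identification of the maximal ideals of $L[t_1,t_2,u]$ above $\mathfrak L_{ij}$ with the rational points $z$. It relies on $\Omega_{ij}\subset(L^\times)^2$, so that no Galois-orbit maximal ideals of larger residue degree occur. I will argue it via the weak Nullstellensatz over $\overline\Q$ applied to $\mathfrak m\overline\Q[\cdot]$, which is proper by integrality of $\overline\Q$ over $L$.
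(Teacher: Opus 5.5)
Your proposal is correct, and your preferred second route is essentially the paper's proof. Both use comaximality and the Chinese remainder theorem for the first equality, then write $\sqrt{\mathfrak L_{ij}}$ as the intersection of the maximal ideals above $\mathfrak L_{ij}$, which are exactly the $\mathfrak m_z$ because $\Omega_{ij}\subset(L^\times)^2$ rules out residue fields larger than $L$. The differences are cosmetic: the paper gets ``radical $=$ intersection of maximal ideals'' from zero-dimensionality of $L[t_1,t_2,u]/\mathfrak L_{ij}$ (a localization of the finite-dimensional $L[t_1,t_2]/(h_i,h_j)$) and identifies maximal ideals through their residue fields, whereas you use the Jacobson property and the weak Nullstellensatz applied to $\mathfrak m\,\overline\Q[t_1,t_2,u]$; your first, descent-based route is also valid but unnecessary.
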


Since $\mathcal{J}_{ij}=\sqrt{\mathfrak L_{ij}}$ and $L[t_1,t_2,u]$ is
Noetherian, each generator $\varphi_l$ has a power
$\varphi_l^{c_l}\in\mathfrak L_{ij}$; with $\mu_{ij}=\sum_l(c_l-1)+1$ a finite
sum of the $c_l$ (each found by Gr\"obner reduction \cite{GG}), any
product of $\mu_{ij}$ of the $\varphi_l$ repeats some $\varphi_l$ at least
$c_l$ times and so lies in $\mathfrak L_{ij}$.  Together with
$\mathfrak L_{ij}\subseteq\sqrt{\mathfrak L_{ij}}=\mathcal{J}_{ij}$ this gives
the inclusion-chain
\begin{equation}
\mathcal{J}_{ij}^{\mu_{ij}}\subseteq \mathfrak L_{ij}\subseteq \mathcal{J}_{ij} .
\label{ap:chain}
\end{equation}

Ideal-membership computations \cite{GG} produce explicit
\emph{certificates} for the two inclusions of \eqref{ap:chain}: not just
the inclusions themselves, but the cofactor polynomials witnessing them,
which survive reduction.  Explicitly, writing
$f_1,f_2,f_3=h_i,h_j,t_1t_2u-1$ for the generators of $\mathfrak L_{ij}$, a
certificate for $\mathfrak L_{ij}\subseteq \mathcal{J}_{ij}$ is a family
$q_{kl}\in L[t_1,t_2,u]$ with
\[
f_k=\sum_{l=1}^{s}q_{kl}\,\varphi_l\qquad(k=1,2,3),
\]
and a certificate for $\mathcal{J}_{ij}^{\mu_{ij}}\subseteq \mathfrak L_{ij}$ is, for each of
the finitely many generators
$\psi=\varphi_{l_1}\cdots\varphi_{l_{\mu_{ij}}}$ of
$\mathcal{J}_{ij}^{\mu_{ij}}$, a triple $q^\psi_1,q^\psi_2,q^\psi_3$ with
\[
\psi=q^\psi_1h_i+q^\psi_2h_j+q^\psi_3(t_1t_2u-1) .
\]
If $\Omega_{ij}=\emptyset$ then $\mathcal{J}_{ij}=(1)$ and the second family
degenerates to the single identity
$1=q_1h_i+q_2h_j+q_3(t_1t_2u-1)$, a certificate for $1\in \mathfrak L_{ij}$.
Let $\mathcal{A}_5\subset L^\times$ be the set of all nonzero coefficients of all the cofactor polynomials above, over all pairs $i<j$.

\begin{rem}
\label{rem:Acompute}
Each $\mathcal A_i$ is computable, so $N$ is an explicit integer.
Concretely, $\mathcal A_1$ is the data of the absolute factorization
\eqref{ap:fact}; $\mathcal A_2$ is read off from the
pairwise differences $h_i-h_j$; $\mathcal A_3$ comes from $\Omega$ via
its coordinates and their differences, as in Remark~\ref{rem:omega-calc};
the sets $\mathcal A_4$ are furnished explicitly by
Proposition~\ref{prop:red-irr} of Appendix~\ref{app:ruppert};
and $\mathcal A_5$ is the set of nonzero coefficients of the certificate
cofactors produced by the ideal-membership computations above.
\end{rem}

Now let $R=\cO_L[1/N]$.
By construction every element of
$\mathcal A_1\cup\cdots\cup\mathcal A_5$ is a unit of $R$. The nonzero
primes of $R$ are the ideals $\mathfrak pR$ with $\mathfrak p$ a prime of
$\cO_L$ not dividing $N$.  Fix such a $\mathfrak p$ and let $p$ be the
rational prime it lies over.
Since $\cO_L$ is a Dedekind domain, the nonzero prime $\mathfrak p$ is
maximal, so $\cO_L/\mathfrak p$ is a field; and since $N\notin\mathfrak p$
the image $\bar{N}$ of $N$ is invertible there, whence
\[
\F_{\mathfrak p}=R/\mathfrak pR=(\cO_L/\mathfrak p)[1/\bar N]
=\cO_L/\mathfrak p ,
\]
so that reduction from $R$ and reduction from $\cO_L$ have the same target
and we may use either.  Note that $\mathfrak p\cap\Z$ is a prime ideal of
$\Z$, and it is nonzero because $N_{L/\Q}(\alpha)\in\mathfrak p\cap\Z
\setminus\{0\}$ for any $\alpha\in\mathfrak p\setminus\{0\}$; by definition
of $p$ it is $p\Z$.  Hence the composite $\Z\to\cO_L\to\F_{\mathfrak p}$
has kernel exactly $p\Z$ and therefore induces an embedding
$\F_p\hookrightarrow\F_{\mathfrak p}$.  Since $\cO_L$ is generated
by $[L:\Q]$ elements as a $\Z$-module, $\F_{\mathfrak p}$ is an
$\F_p$-vector space of some dimension $m\le[L:\Q]$; thus
$\F_{\mathfrak p}=\F_{p^m}$ is a finite field of characteristic $p$, and $\overline{\F_{\mathfrak p}}=\overline{\F_p}$.

The quotient map $\pi_{\mathfrak p}:R\to\F_{\mathfrak p}$
extends uniquely to a ring homomorphism
\[
(\pi_{\mathfrak p})_*:R[t_1,t_2]\longrightarrow\F_{\mathfrak p}[t_1,t_2],
\qquad
\sum_\alpha c_\alpha t^\alpha\longmapsto
\sum_\alpha\pi_{\mathfrak p}(c_\alpha)t^\alpha ,
\]
acting as the identity on $t_1,t_2$, and likewise on $R[t_1,t_2,u]$.
Thus, in what follows, $\bar\alpha=\pi_{\mathfrak p}(\alpha)$ for $\alpha\in R$,
$\bar f=(\pi_{\mathfrak p})_*(f)$ for a polynomial $f$ in $R[t_1,t_2]$ or
$R[t_1,t_2,u]$, and $\bar z$ denotes the coordinatewise reduction of a
point $z$.

\subsubsection{The model lemma}
We summarize the outcome in a single lemma, the model on which
Subsection~\ref{sec:count} operates.

\begin{lem}
\label{lem:model}
Given a normalized $g$ with factorization \eqref{ap:fact}, one can compute a finite Galois extension $L/\Q$
and an integer $N$, such that, with $R=\cO_L[1/N]$ and
$\bar Z_i=Z(\bar h_i)\subset\mathbb A^2(\overline{\F_p})$, the following hold for every prime $\mathfrak p$
of $\cO_L$ not dividing $N$.
\begin{enumerate}[label=(\alph*), ref=(\alph*)]
\item\label{mod:fact} Each $\bar h_i\in\F_{\mathfrak p}[t_1,t_2]$ is
absolutely irreducible of the same total degree $\delta_i=\deg h_i$ as
$h_i$, the $\bar h_i$ are pairwise nonassociate, and
$Z(\bar g)=\bigcup_{i\in\mathcal I}\bar Z_i$.
\item\label{mod:bound} Geometrically, $\bar Z_i$ has at most $2\delta_i$
points on the two axes, its projective closure at most $\delta_i$ points
at infinity, and $\bar Z_i\cap\bar Z_j$ at most $\delta_i\delta_j$ points
for $j\ne i$.
\item\label{mod:decomp} Reduction is injective on $\Omega$ and, with
$\bar\Omega=\{\bar z:\ z\in\Omega\}\subset(\F_{\mathfrak p}^\times)^2$ and
\[
\bar U_i=\bigl(\bar Z_i\cap(\overline{\F_p}^\times)^2\bigr)
\setminus\bigcup_{j\ne i}\bar Z_j ,
\]
one has the disjoint decomposition
\begin{equation}
Z(\bar g)\cap(\overline{\F_p}^\times)^2
=\Bigl(\bigsqcup_{i\in\mathcal I}\bar U_i\Bigr)\sqcup\bar\Omega ,
\label{ap:decomp-bar}
\end{equation}
the exact analogue of \eqref{ap:decomp} after reduction.
\item\label{mod:frob} $\mathfrak p$ is unramified in $L/\Q$, so that
$\sigma=\Frob_{\mathfrak p}\in G$ is defined; $R$ is $G$-stable; and
$\overline{\sigma(\alpha)}=\bar\alpha^{\,p}$ for every $\alpha\in R$,
whence $\bar\alpha\in\F_p$ whenever $\sigma(\alpha)=\alpha$.
\end{enumerate}
\end{lem}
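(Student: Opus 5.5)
The plan is to verify the four clauses in turn, using only that every element of $\mathcal A_1\cup\dots\cup\mathcal A_5$ is a unit of $R$, so that its reduction modulo $\mathfrak p$ is nonzero.

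For \ref{mod:fact}, the leading coefficient of each $h_i$ is $1$ and all coefficients lie in $R$. Hence $\bar h_i$ is defined and has the same leading monomial, so $\deg\bar h_i=\delta_i$. Absolute irreducibility of $\bar h_i$ is exactly what Proposition~\ref{prop:red-irr} guarantees once the elements of $\mathcal A(h_i)\subseteq\mathcal A_4$ are units. The $\bar h_i$ are pairwise nonassociate because both have leading coefficient $1$, so associate would mean equal; but $\bar h_i-\bar h_j$ has the nonzero coefficient $\bar\gamma_{ij}$ by $\mathcal A_2$. Reducing \eqref{ap:fact}, which holds in $R[t_1,t_2]$, gives $\bar g=\bar c\prod\bar h_i^{n_i}$ with $\bar c\neq0$ by $\mathcal A_1$. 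Therefore $Z(\bar g)=\bigcup\bar Z_i$.

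For \ref{mod:bound}, I would use the fact that $\bar h_i$ is irreducible of degree $\delta_i$ and not a monomial. In particular it is not divisible by $t_1$ or $t_2$; otherwise irreducibility would force $\bar h_i=t_k$ up to a unit, contradicting that its leading coefficient is $1$ while a nonzero constant term or a second monomial survives. I would argue this via the support, using that all nonzero coefficients of $h_i$ are units (again $\mathcal A_1$), so the support is preserved. Then the restriction of $\bar h_i$ to each axis is a nonzero polynomial of degree at most $\delta_i$, giving at most $2\delta_i$ axial points. The points at infinity are the zeros of the top homogeneous part, which is a nonzero form of degree $\delta_i$, so there are at most $\delta_i$ of them. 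The last bound is Bézout for the coprime, distinct irreducibles $\bar h_i,\bar h_j$.

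Clause \ref{mod:decomp} is the main obstacle. Injectivity on $\Omega$ and $\bar\Omega\subset(\F_{\mathfrak p}^\times)^2$ come from $\mathcal A_3$: the coordinates $a_z,b_z$ reduce to nonzero elements, and distinct points differ in some coordinate whose difference is a unit. For the decomposition, every torus zero of $\bar g$ lies on some $\bar Z_i$, and the substance is that the torus points of $\bar Z_i\cap\bar Z_j$ are exactly the reductions of $\Omega_{ij}$. Here I would reduce the certificates, whose cofactors have coefficients in $R$ by $\mathcal A_5$, to obtain $\bar{\mathcal J}_{ij}^{\mu_{ij}}\subseteq\bar{\mathfrak L}_{ij}\subseteq\bar{\mathcal J}_{ij}$ over $\F_{\mathfrak p}$. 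This shows $Z(\bar{\mathfrak L}_{ij})=Z(\bar{\mathcal J}_{ij})$. The zero set of $\bar{\mathcal J}_{ij}$ is the union of the points $\bar{\mathfrak m}_z$, that is, the points $(\bar a_z,\bar b_z,(\bar a_z\bar b_z)^{-1})$. Projecting by $\pr$ then gives the torus part of $\bar Z_i\cap\bar Z_j$ as $\bar\Omega_{ij}$. In the case $\Omega_{ij}=\emptyset$, the reduced identity $1\in\bar{\mathfrak L}_{ij}$ shows the intersection has no torus points. Disjointness of the pieces then follows as in \eqref{ap:decomp}.

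For \ref{mod:frob}, $\mathfrak p\nmid\mathrm{disc}(L/\Q)$ gives unramifiedness. $R$ is $G$-stable since $\cO_L$ is and $N\in\Z$. The congruence $\sigma(\alpha)\equiv\alpha^p$ holds on $\cO_L$ and extends to $R$ because inverting $N$ is compatible with $\sigma$ and with reduction. Finally, if $\sigma(\alpha)=\alpha$ then $\bar\alpha^p=\bar\alpha$, so $\bar\alpha\in\F_p$.
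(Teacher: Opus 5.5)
Your proposal is correct and follows the paper's proof essentially clause by clause. It uses the same ingredients: support preservation from $\mathcal A_1$, Proposition~\ref{prop:red-irr} via $\mathcal A_4$, $\bar\gamma_{ij}\neq0$ for nonassociateness, the reduced certificate chain $\bar{\mathcal J}_{ij}^{\,\mu_{ij}}\subseteq\bar{\mathfrak L}_{ij}\subseteq\bar{\mathcal J}_{ij}$ for \ref{mod:decomp}, and the discriminant dividing $N$ together with $N\in\Z$ for \ref{mod:frob}. The only differences are cosmetic: in \ref{mod:bound} the paper reads $t_k\nmid\bar h_i$ directly off the preserved support rather than going through irreducibility, and in \ref{mod:frob} it lets the decomposition group act on $R/\mathfrak pR$, whereas your extension from $\cO_L$ to $R$ is justified because $\alpha\mapsto\overline{\sigma(\alpha)}$ and $\alpha\mapsto\bar\alpha^{\,p}$ are ring homomorphisms agreeing on $\cO_L$.
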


\begin{proof}
Take $L$ and $N$ as constructed above.  Fix a prime $\mathfrak p$ of $\cO_L$ not dividing $N$.
Recall that every element of $\mathcal A_1\cup\dots\cup\mathcal A_5$ is a
unit of $R$.

\ref{mod:fact}.  The coefficients of the $h_i$, and $c$, are units of
$R$, so $\bar h_i$ retains the support of $h_i$, hence its bidegree, its
total degree $\delta_i$, and its leading coefficient $1$.  Reducing
\eqref{ap:fact} gives $\bar g=\bar c\prod_i\bar h_i^{n_i}$ with
$\bar c\in\F_{\mathfrak p}^\times$, whence $Z(\bar g)=\bigcup_i\bar Z_i$.

The nonzero coefficients of $h_i$ and the
elements of $\mathcal A(h_i)$ are units of $R$, so
Proposition~\ref{prop:red-irr} with $E=L$, $A=R$ and
$\mathfrak q=\mathfrak pR$ shows $\bar h_i$ absolutely irreducible.  Finally
$\bar\gamma_{ij}\ne0$ gives $\bar h_i\ne\bar h_j$ for $i\ne j$; and being
monic, two such polynomials are associate only if they are equal.

\ref{mod:bound}.  Since $\bar h_i$ has the support of $h_i$ and
$t_1,t_2\nmid h_i$, the restrictions $\bar h_i(0,t_2)$ and
$\bar h_i(t_1,0)$ are nonzero of degree at most $\delta_i$, so $\bar Z_i$
meets each axis in at most $\delta_i$ points.  Its top-degree form is a
nonzero form of degree $\delta_i$ in two variables, so its projective
closure has at most $\delta_i$ points at infinity.  By \ref{mod:fact} the
curves $\bar Z_i,\bar Z_j$ for $i\ne j$ share no component, hence meet
in at most $\delta_i\delta_j$ points, by the plane B\'ezout theorem.

\ref{mod:decomp}.  Each coordinate $a_z,b_z$ is a unit of $R$, so $z$
reduces to $\bar z\in(\F_{\mathfrak p}^\times)^2$.  If $z\ne w$, one of
$a_z-a_w$, $b_z-b_w$ is a nonzero unit, hence reduces to a nonzero element,
so $\bar z\ne\bar w$; reduction is injective on $\Omega$.

We claim next that, for $i<j$, the torus points lying on both $\bar Z_i$ and
$\bar Z_j$ are exactly the $\bar z$ with $z\in\Omega_{ij}$.  The generators
of the $\mathfrak m_z$ and the cofactors of the two certificates lie in
$R$, so the identity $\mathcal{J}_{ij}=\prod_z\mathfrak m_z$ and both
inclusions of \eqref{ap:chain} also hold in $R[t_1,t_2,u]$; reducing modulo
$\mathfrak p$ gives
\[
\bar{\mathcal{J}}_{ij}^{\,\mu_{ij}}\subseteq\bar{\mathfrak L}_{ij}\subseteq
\bar{\mathcal{J}}_{ij}
\qquad\text{in }\F_{\mathfrak p}[t_1,t_2,u] ,
\]
taken, per our convention, in $\mathbb A^3(\overline{\F_p})$.  Since
$Z(\mathfrak a^\mu)=Z(\mathfrak a)$, the chain forces
$Z(\bar{\mathfrak L}_{ij})=Z(\bar{\mathcal{J}}_{ij})$; and as reduction carries a
product of ideals to the product of the images,
$\bar{\mathcal{J}}_{ij}=\prod_z\bar{\mathfrak m}_z$, so
\[
Z(\bar{\mathfrak L}_{ij})=Z(\bar{\mathcal{J}}_{ij})
=\bigcup_{z\in\Omega_{ij}}Z(\bar{\mathfrak m}_z)
=\bigl\{\bigl(\bar a_z,\bar b_z,(\bar a_z\bar b_z)^{-1}\bigr):
\ z\in\Omega_{ij}\bigr\} ,
\]
each $Z(\bar{\mathfrak m}_z)$ being a single point.  Now $Z(\bar{\mathfrak L}_{ij})$
is exactly the torus points lying on both $\bar Z_i$ and $\bar Z_j$, each
tagged with $u=(t_1t_2)^{-1}$, so projecting the display to the $(t_1,t_2)$-plane yields the
claim.  If $\Omega_{ij}=\emptyset$, the certificate for
$1\in \mathfrak L_{ij}$
reduces to one for $1\in\bar{\mathfrak L}_{ij}$ and gives
$Z(\bar{\mathfrak L}_{ij})=\emptyset$, the same conclusion.

The decomposition now follows as over $\overline\Q$.  By \ref{mod:fact}
every torus point of $Z(\bar g)$ lies on some $\bar Z_i$; on exactly one, it
lies in the corresponding $\bar U_i$ and in no other, the $\bar U_j$ being
pairwise disjoint; on at least two, it lies in $\bar\Omega$ by the claim and
in no $\bar U_j$.  Conversely, each $\bar U_i$ and $\bar\Omega$ consists of
torus points of $Z(\bar g)$, giving \eqref{ap:decomp-bar}.

\ref{mod:frob}.  Since $N\in\Z$ and $\mathfrak p\cap\Z=p\Z$, the hypothesis
$N\notin\mathfrak p$ gives $p\nmid N$; as $\mathrm{disc}(L/\Q)$ divides $N$, $p$
is unramified in $L/\Q$ \cite{Neukirch} and $\sigma=\Frob_{\mathfrak p}$ is
defined.  Since $N$ is fixed by $G$, $\tau(R)=R$ for every $\tau\in G$.
Being in the decomposition group of $\mathfrak p$, $\sigma$ satisfies
$\sigma(\mathfrak pR)=\mathfrak pR$ and so induces on
$\F_{\mathfrak p}=R/\mathfrak pR=\cO_L/\mathfrak p$ the $p$-power map, the
Frobenius at the unramified prime; that is
$\overline{\sigma(\alpha)}=\bar\alpha^{\,p}$.  If $\sigma(\alpha)=\alpha$
this reads $\bar\alpha^{\,p}=\bar\alpha$, and the $p$-power map on
$\F_{\mathfrak p}$ fixes exactly $\F_p$.
\end{proof}

\subsection{Counting \texorpdfstring{$\F_p$}{Fp}-points}
\label{sec:count}

Throughout this subsection $p$ is a rational prime not dividing $N$ and
$\mathfrak p$ is a prime of $\cO_L$ above $p$.  By the convention of
\S\ref{sec:fix}, $\F_{\mathfrak p}$ is a subfield of the fixed algebraic
closure $\overline{\F_p}$ and all the reduced objects of
Lemma~\ref{lem:model} live in $(\overline{\F_p}^\times)^2$, the notation
$X(\F_p)=X\cap\F_p^2$ being available for any subset $X$ of the affine plane $\mathbb{A}^2(\overline{\F_p})$.  When
$X$ is a subset of the projective plane $\mathbb P^2(\overline{\F_p})$ we likewise write
$X(\F_p)$ for the points of $X$ admitting homogeneous coordinates in
$\F_p$.

\begin{lem}
\label{lem:weil}
Let $\bar h\in\F_p[t_1,t_2]$ be absolutely irreducible of total degree
$\delta$, and let $\bar Z^{+}\subseteq\mathbb P^2(\overline{\F_p})$ be the
projective closure of $Z(\bar h)$.  Then
\[
\bigl|\#\bar Z^{+}(\F_p)-(p+1)\bigr|\le(\delta-1)(\delta-2)\sqrt p .
\]
\end{lem}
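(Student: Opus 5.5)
The plan is to deduce this from the Hasse--Weil bound for absolutely irreducible projective curves, passing through the normalization to account for singularities. Let $C=\bar Z^{+}$. Since $\bar h$ is absolutely irreducible of degree $\delta$, $C$ is a geometrically irreducible plane projective curve defined over $\F_p$. Let $\tilde C\to C$ be its normalization. This is a smooth, projective, geometrically irreducible curve over $\F_p$, of geometric genus $\tilde g$. The genus formula for plane curves gives
\[
\tilde g=\frac{(\delta-1)(\delta-2)}{2}-\sum_{P\in\mathrm{Sing}(C)}\delta_P ,
\]
where $\delta_P\ge1$ is the delta-invariant at $P$. The Weil bound (see e.g.\ Hasse--Weil, or Aubry--Perret's version for singular curves) then yields
\[
\bigl|\#\tilde C(\F_p)-(p+1)\bigr|\le 2\tilde g\sqrt p .
\]

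The second step compares $\#C(\F_p)$ with $\#\tilde C(\F_p)$. The map $\tilde C\to C$ is a bijection away from the singular points. So the difference between the two counts comes only from the fibres over singular points. Over a singular point $P$, the number of geometric branches $r_P$ satisfies $r_P-1\le\delta_P$. Over an $\F_p$-rational singular point, the $\F_p$-rational points of the fibre number between $0$ and $r_P\le\delta_P+1$. Over a non-rational singular point, the fibre contributes no rational points at all, and likewise none to $C(\F_p)$. Consequently
\[
\bigl|\#C(\F_p)-\#\tilde C(\F_p)\bigr|\le\sum_P\delta_P .
\]
This is the bound of Aubry and Perret.

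Combining the two steps gives
\[
\bigl|\#C(\F_p)-(p+1)\bigr|\le 2\tilde g\sqrt p+\sum_P\delta_P .
\]
Since $\sqrt p\ge1$ and $2\tilde g+2\sum_P\delta_P=(\delta-1)(\delta-2)$, the right-hand side is at most $\bigl(2\tilde g+2\sum_P\delta_P\bigr)\sqrt p=(\delta-1)(\delta-2)\sqrt p$, which is the claim.

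The main obstacle is the careful bookkeeping at singular points, namely the inequalities $r_P\le\delta_P+1$ and the handling of non-rational singular points. I would simply cite Aubry--Perret, ``A Weil theorem for singular curves'', which states exactly the inequality $\bigl|\#C(\F_q)-(q+1)\bigr|\le(\delta-1)(\delta-2)\sqrt q$ for an absolutely irreducible plane curve of degree $\delta$. The low-degree cases $\delta=1,2$ are consistent with this: there the right-hand side is $0$, and $C$ is smooth and rational, so $\#C(\F_p)=p+1$ exactly.
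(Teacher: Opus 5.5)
Your proof is correct and takes essentially the paper's approach: both rest on Aubry--Perret's Weil bound $|\#X(\F_p)-(p+1)|\le 2\gamma_X\sqrt p$, where $\gamma_X=\tfrac{(\delta-1)(\delta-2)}{2}$ is the arithmetic genus of a plane curve of degree $\delta$. Your normalization and $\delta$-invariant bookkeeping simply unpacks how Aubry--Perret derive that bound from the smooth case. The one point the paper checks explicitly and you take for granted is that the homogenization $\bar h^{+}$ remains absolutely irreducible, so that $\bar Z^{+}$ really is a geometrically irreducible curve to which the bound applies.
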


\begin{proof}
The homogenization
$\bar h^{+}(T_0,T_1,T_2)=T_0^{\delta}\,\bar h(T_1/T_0,T_2/T_0)$ is a form
of degree $\delta$ with coefficients in $\F_p$, not divisible by $T_0$.
 It is again
absolutely irreducible: since $T_0\nmid\bar h^{+}$, every factor of
$\bar h^{+}$ has a monomial free of $T_0$, hence substituting $T_0=1$
sends each factor of $\bar h^{+}$ to a nonconstant polynomial; a
factorization of the homogeneous $\bar h^{+}$ would thus give one of
$\bar h$, contradicting that $\bar h$ is absolutely irreducible.  

Thus $\bar Z^{+}=Z(\bar h^{+})$ is a reduced, absolutely irreducible projective
plane curve of degree $\delta$ over $\F_p$. Its \emph{arithmetic} genus is $\tfrac{(\delta-1)(\delta-2)}2$, a number depending only on the degree.  The Weil bound
\cite[Corollary~2.4]{AP} gives, for every absolutely irreducible projective curve $X$
over a finite field $\F_p$,
\[
\bigl|\#X(\F_p)-(p+1)\bigr|\le 2\,\gamma_X\sqrt p,
\]
where $\gamma_X$ denotes the arithmetic genus of $X$.  Applying this
to $X=\bar Z^{+}$ gives the assertion.
\end{proof}

Recall $D=\deg g\ge1$; then
$\delta_i\le D$ for every $i$.  Let
\begin{equation}
\mathcal S(g)=\{p:\ p\mid N\}\cup\{p:\ p\le9D^4\}.
\label{ap:Sg}
\end{equation}

\begin{lem}
\label{lem:Ui}
Let $p\notin \mathcal S(g)$, $\mathfrak p$ a prime of $\cO_L$ over $p$, and $i\in\mathcal I$ with $\bar h_i\in\F_p[t_1,t_2]$. Then $\bar U_i(\F_p)\ne\emptyset$.
\end{lem}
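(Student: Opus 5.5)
The plan is a point count: show that the number of $\F_p$-points of $\bar U_i$ is positive by bounding below the $\F_p$-points of the projective closure of $\bar Z_i$ and subtracting everything that is not in $\bar U_i$.

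\textbf{Main term.} By Lemma~\ref{lem:model}\ref{mod:fact}, $\bar h_i$ is absolutely irreducible of total degree $\delta_i\le D$. By hypothesis it has coefficients in $\F_p$, so Lemma~\ref{lem:weil} applies to $\bar h=\bar h_i$. It gives
\[
\#\bar Z_i^{+}(\F_p)\ \ge\ p+1-(\delta_i-1)(\delta_i-2)\sqrt p\ \ge\ p+1-D^2\sqrt p ,
\]
where $\bar Z_i^{+}$ is the projective closure of $\bar Z_i$.

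\textbf{Removal.} From these points we discard the following, each bounded by Lemma~\ref{lem:model}\ref{mod:bound}:
\begin{itemize}
\item the points at infinity, at most $\delta_i\le D$;
\item the affine points on the two axes, at most $2\delta_i\le 2D$;
\item the affine torus points lying on some $\bar Z_j$ with $j\ne i$, at most $\sum_{j\ne i}\delta_i\delta_j\le\delta_i\sum_j\delta_j\le D\cdot D=D^2$.
\end{itemize}
The last bound uses $\sum_j\delta_j\le\sum_j n_j\delta_j=D$, which follows from the factorization \eqref{ap:fact} together with the degree preservation in Lemma~\ref{lem:model}\ref{mod:fact}.

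What survives is exactly the set $\bar U_i(\F_p)$. An affine point of $\bar Z_i^{+}$ with $\F_p$-homogeneous coordinates is an affine point with coordinates in $\F_p$. Once the axis points are removed, it lies in the torus, and once the other components are removed, it lies in $\bar U_i$. Hence
\[
\#\bar U_i(\F_p)\ \ge\ p+1-D^2\sqrt p-3D-D^2 .
\]

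\textbf{Final inequality.} It remains to show the right side is positive when $p>9D^4$, which holds since $p\notin\mathcal S(g)$. Then $\sqrt p>3D^2$, so $D^2\sqrt p<p/3$. Also $3D+D^2\le 4D^2\le 4D^4<p/2$. Therefore
\[
p+1-D^2\sqrt p-3D-D^2>p-\tfrac p3-\tfrac p2>0 .
\]

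I expect the main obstacle to be bookkeeping rather than analysis. One has to check carefully that the Weil bound of Lemma~\ref{lem:weil} is applied over $\F_p$, not $\F_{\mathfrak p}$, which is exactly why the hypothesis $\bar h_i\in\F_p[t_1,t_2]$ is there. One also has to check that the bound $\delta_i\sum_{j\ne i}\delta_j\le D^2$ correctly uses the multiplicities $n_j\ge 1$. The choice of constant $9D^4$ then comfortably absorbs all error terms.
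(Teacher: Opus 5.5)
Your proposal is correct and follows the paper's proof essentially step for step: you apply the Weil bound of Lemma~\ref{lem:weil} to the projective closure $\bar Z_i^{+}$, then remove at most $3\delta_i$ points at infinity and on the axes and at most $\delta_i\sum_{j\ne i}\delta_j\le D^2$ points on the other components via Lemma~\ref{lem:model}\ref{mod:bound}, and finally use $p>9D^4$ to make the lower bound positive. Your closing arithmetic ($D^2\sqrt p<p/3$ and $3D+D^2<p/2$) is only a cosmetic rearrangement of the paper's estimate $\sqrt p(\sqrt p-D^2)>6D^4>4D^2$.
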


\begin{proof}
By Lemma~\ref{lem:model}\ref{mod:fact}, $\bar h_i$ is absolutely
irreducible of degree $\delta_i$, and by hypothesis it lies in
$\F_p[t_1,t_2]$; thus Lemma~\ref{lem:weil}, applied to its projective
closure $\bar Z_i^{+}$, gives
\[
\#\bar Z_i^{+}(\F_p)\ \ge\ p+1-(\delta_i-1)(\delta_i-2)\sqrt p
\ \ge\ p+1-D^2\sqrt p ,
\]
since $(\delta_i-1)(\delta_i-2)\le D^2$.  Passing from $\bar Z_i^{+}$ to
$\bar U_i$ removes the points at infinity and on the axes, at most
$3\delta_i\le3D$ of them, and those on some $\bar Z_j$, $j\ne i$, at most
$\delta_i\sum_{j\ne i}\delta_j\le D^2$, both by
Lemma~\ref{lem:model}\ref{mod:bound}.  Hence
\[
\#\bar U_i(\F_p)\ \ge\ p+1-D^2\sqrt p-(3D+D^2)
\ >\ p-D^2\sqrt p-4D^2,
\]
as $3D\le3D^2$.  Since $p\notin \mathcal S(g)$, we have $\sqrt p>3D^2$, and
\[p-D^2\sqrt p=\sqrt p(\sqrt p-D^2)>3D^2\cdot 2D^2=6D^4\ >\ 4D^2 .\]
So $\#\bar U_i(\F_p)>0$, i.e.\ $\bar U_i(\F_p)\ne\emptyset$.
\end{proof}

\begin{lem}
\label{lem:count}
For every prime $p\notin \mathcal S(g)$ and every prime $\mathfrak p$ of
$\cO_L$ lying over $p$, let $\sigma=\Frob_{\mathfrak p}$. Then
\begin{enumerate}
\item $\Bigl(\bigcup_{i\in\mathcal I}\bar U_i\Bigr)(\F_p)\ne\emptyset
\Longleftrightarrow \sigma\text{ fixes some }i\in\mathcal I$;
\item $\bar\Omega(\F_p)\ne\emptyset
\Longleftrightarrow \sigma\text{ fixes some }z\in\Omega$.
\end{enumerate}
\end{lem}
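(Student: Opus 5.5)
The plan is to prove both equivalences by comparing, for each reduced object, its $\F_p$-rationality with fixedness under $\sigma$, using Lemma~\ref{lem:model}\ref{mod:frob}. Throughout we use the \emph{$p$-power Frobenius} $\phi:\overline{\F_p}\to\overline{\F_p}$, $x\mapsto x^p$, extended coefficientwise to polynomials and coordinatewise to points. A point of $\mathbb A^2(\overline{\F_p})$ is $\F_p$-rational iff it is $\phi$-fixed. The key compatibility, from \ref{mod:frob}, is $\overline{\sigma(\alpha)}=\bar\alpha^{\,p}$ for $\alpha\in R$. Consequently $\overline{\sigma(h_i)}=\phi(\bar h_i)$, so $\bar h_{\sigma(i)}=\phi(\bar h_i)$. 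Likewise $\overline{\sigma z}=\phi(\bar z)$ for $z\in\Omega$, since the coordinates of $z$ lie in $R$ (they are units of $R$, being in $\mathcal A_3$).

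For (2): if $\sigma z=z$, then $\phi(\bar z)=\overline{\sigma z}=\bar z$, so $\bar z\in\bar\Omega(\F_p)$. Conversely, suppose $\bar z\in\F_p^2$ for some $z\in\Omega$. Then $\overline{\sigma z}=\phi(\bar z)=\bar z$. Since $\sigma z\in\Omega$ (as $\Omega$ is $G$-stable) and reduction is injective on $\Omega$ by \ref{mod:decomp}, we get $\sigma z=z$.

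For (1), the forward direction goes as follows. Take $x\in\bar U_i(\F_p)$. Applying $\phi$ gives $\bar h_{\sigma(i)}(x)=\bar h_{\sigma(i)}(\phi x)=\phi(\bar h_i(x))=0$. So $x$ lies on $\bar Z_{\sigma(i)}$. Since $x\in\bar U_i$ lies on no $\bar Z_j$ with $j\neq i$, this forces $\sigma(i)=i$.

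The converse is the only step needing real input. Suppose $\sigma(i)=i$. Then $\phi(\bar h_i)=\bar h_i$, so every coefficient of $\bar h_i$ is fixed by the $p$-power map and $\bar h_i\in\F_p[t_1,t_2]$. Lemma~\ref{lem:Ui} then gives $\bar U_i(\F_p)\neq\emptyset$, because $p\notin\mathcal S(g)$.

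The main subtlety, and the step I would check most carefully, is the identity $\bar h_{\sigma(i)}=\phi(\bar h_i)$ at the level of the $\bar h_i$ themselves, not merely up to associates. This is exact: $\sigma(h_i)=h_{\sigma(i)}$ holds on the nose, as established when $\mathcal I$ was made a $G$-set, and reduction commutes with applying $\sigma$ coefficientwise. With that identity in hand the remaining steps are the direct verifications above.
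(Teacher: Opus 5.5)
Your proposal is correct and follows the paper's proof essentially step for step. Both use $\overline{\sigma(\alpha)}=\bar\alpha^{\,p}$ from Lemma~\ref{lem:model}\ref{mod:frob} to get $\bar h_{\sigma(i)}=\fr(\bar h_i)$ and $\overline{\sigma(z)}=\fr(\bar z)$. Both then settle (1) via the definition/disjointness of the $\bar U_j$ together with Lemma~\ref{lem:Ui}, and (2) via the $G$-stability of $\Omega$ and the injectivity of reduction on it; the only cosmetic difference is that you check directly that an $\F_p$-point of $\bar U_i$ lies on $\bar Z_{\sigma(i)}$, rather than first noting that the $p$-power map carries $\bar U_i$ onto $\bar U_{\sigma(i)}$.
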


\begin{proof}
 Let $\fr$ be the $p$-power map of $\overline{\F_p}$:
$a\mapsto a^{p}$. It acts componentwise on the coordinates of a point and on polynomial
coefficients.  By Subsection~\ref{sec:l},
$\sigma(h_i)=h_{\sigma(i)}$, and by
Lemma~\ref{lem:model}\ref{mod:frob}, $\overline{\sigma(\alpha)}
=\bar\alpha^{\,p}$ for $\alpha\in R$.  Hence
$\fr(\bar h_i)=\bar h_{\sigma(i)}$, so $\fr$ maps $\bar Z_i$ onto
$\bar Z_{\sigma(i)}$ and, since it preserves
$(\overline{\F_p}^\times)^2$, also $\bar U_i$ onto
$\bar U_{\sigma(i)}$, for every $i\in\mathcal I$.

(1) Pick $P$ in $\bigl(\bigcup_{i\in\mathcal I}\bar U_i\bigr)(\F_p)$.
Then $P\in\bar U_i$ for some $i$, and $\fr(P)=P$ because $P\in\F_p^2$.
As $P\in\bar U_i$, the point
$P=\fr(P)\in\fr(\bar U_i)=\bar U_{\sigma(i)}$, so
$P\in\bar U_i\cap\bar U_{\sigma(i)}$; the $\bar U_j$ being pairwise
disjoint forces $\sigma(i)=i$.

Conversely, let $\sigma(i)=i$.  Each coefficient $\alpha$ of $h_i$ is
fixed by $\sigma$ and lies in $\mathcal A_1\subset R^\times$, so by
Lemma~\ref{lem:model}\ref{mod:frob},
\[
\bar\alpha^{\,p}=\overline{\sigma(\alpha)}=\bar\alpha,
\]
and since the $p$-power map fixes exactly $\F_p$ inside
$\F_{\mathfrak p}$, every reduced coefficient of $\bar h_i$ lies in
$\F_p$, so $\bar h_i\in\F_p[t_1,t_2]$; Lemma~\ref{lem:Ui} yields
$\bar U_i(\F_p)\ne\emptyset$.

(2) By Lemma~\ref{lem:model}\ref{mod:decomp},
$z\mapsto\bar z$ is injective on $\Omega$, and as the coordinates of $z$
lie in $\mathcal A_3\subset R^\times$, Lemma~\ref{lem:model}\ref{mod:frob}
gives $\overline{\sigma(z)}=\fr(\bar z)$ for $z\in\Omega$.  A point of
$(\F_{\mathfrak p}^\times)^2$ is fixed by $\fr$ exactly when its
coordinates lie in $\F_p$; hence
\[
\bar z\in(\F_p^\times)^2
\iff\fr(\bar z)=\bar z
\iff\overline{\sigma(z)}=\bar z
\iff\sigma(z)=z,
\]
since $\sigma(z)\in\Omega$ ($\Omega$ is $G$-stable) and reduction is
injective on $\Omega$.  Thus reduction maps
$\{z\in\Omega:\ \sigma(z)=z\}$ bijectively onto $\bar\Omega(\F_p)$, and
$\bar\Omega(\F_p)\ne\emptyset$ if and only if $\sigma$ fixes a point of
$\Omega$.
\end{proof}

\subsection{Proof and Consequences}
\label{sec:assem}

\subsubsection*{Proof of
Theorem~\ref{thm:torus}}

Define the finite sets
\[
A_g=
\bigl\{\sigma\in G:\ \sigma(i)=i\ \text{for some }i\in\mathcal I\bigr\}
\ \cup\
\bigl\{\sigma\in G:\ \sigma(z)=z\ \text{for some }z\in\Omega\bigr\}
\]
with $\mathcal S(g)$ as in \eqref{ap:Sg}.  Having a fixed point in a
$G$-set is conjugation-invariant (if $\sigma(x)=x$, then
$(\tau\sigma\tau^{-1})(\tau x)=\tau x$), so $A_g$ is conjugacy-stable, as
the theorem asserts.

Fix $p\notin \mathcal S(g)$ and a prime $\mathfrak p$ of $\cO_L$ over $p$, and let
$\sigma=\Frob_{\mathfrak p}$, which is defined by
Lemma~\ref{lem:model}\ref{mod:frob}.  Since $g$ has integer coefficients
and $\F_p\subseteq\F_{\mathfrak p}$, the reduction of $g$ modulo
$\mathfrak p$ agrees on $(\F_p^\times)^2$ with the reduction of $g$
modulo $p$; hence
\[
p\in\PP(g)\Longleftrightarrow
\bigl(Z(\bar g)\cap(\overline{\F_p}^\times)^2\bigr)(\F_p)\ne\emptyset ,
\]
and by the reduced decomposition \eqref{ap:decomp-bar} of
Lemma~\ref{lem:model}\ref{mod:decomp} the right-hand side holds if and
only if
\[
\Bigl(\bigcup_{i\in\mathcal I}\bar U_i\Bigr)(\F_p)\ne\emptyset
\quad\text{or}\quad \bar\Omega(\F_p)\ne\emptyset ,
\]
which by Lemma~\ref{lem:count} is equivalent to $\sigma$ fixing an
element of $\mathcal I$ or of $\Omega$, i.e.\ to $\sigma\in A_g$.  This is
\eqref{eq:star}, completing the proof of
Theorem~\ref{thm:torus}.

\subsubsection*{Consequences}
Recall that a set of primes is \emph{Frobenian} in the sense of Serre
\cite[Subsection~3.3]{Serre} when it coincides, up to a finite set, with the set of
primes whose Frobenius class in some finite Galois group falls into a
prescribed conjugacy-stable subset.  

Theorem~\ref{thm:torus} says that, outside the finite set $\mathcal S(g)$,
$\PP(g)$ coincides with the set of primes whose Frobenius class in
$G=\Gal(L/\Q)$ lies in $A_g$.  Since $A_g$ is conjugacy-stable, the
Chebotarev density theorem \cite[Ch.~VII, (13.4)]{Neukirch} assigns to
this latter set the natural density $|A_g|/|G|$.  The two sets coincide
away from the finite set $\mathcal S(g)$, so they can differ only at the
finitely many primes of $\mathcal S(g)$; a finite change does not alter
the density, hence $\PP(g)$ has density $|A_g|/|G|$.  Its complement in
$\PP$, namely $\mathfrak D(H)$, is therefore Frobenian of natural density
$d(H)\in\Q\cap[0,1]$, given by
\[
d(H)=
\begin{cases}
1, & D=0,\\[2pt]
1-\dfrac{|A_g|}{|G|}, & D\ge1.
\end{cases}
\]
Since $\mathfrak D(H)=\PP\setminus\PP(g)$, its shape is governed by
$A_g$ alone, and the three possibilities for $A_g$ give the following
corollary.
\begin{cor}
\label{cor:trichotomy}
Let $H\le F$ be free of rank two with $A_H=F^{\ab}$.  Exactly one of the
following holds.
\begin{enumerate}[label=(\roman*), ref=(\roman*)]
\item\label{tri:all} $A_g=\emptyset$ and $I_H=\R$, and $\mathfrak D(H)=\PP$.
\item\label{tri:fin} $A_g=G$ and $I_H\ne\R$, and $\mathfrak D(H)$ is finite,
being contained in $\mathcal S(g)$.
\item\label{tri:pos} $A_g\ne G$ and $I_H\ne\R$, and $\mathfrak D(H)$ is
neither finite nor cofinite.
\end{enumerate}
\end{cor}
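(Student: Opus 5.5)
The plan is to organize the trichotomy by the size of $A_g$: it is empty, all of $G$, or a proper nonempty subset. The density consequences come from Theorem~\ref{thm:torus} together with Chebotarev. We also need to tie $A_g=\emptyset$ to $I_H=\R$, and this is the one point needing an argument.

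First I separate the case $D=0$. Then the normalized generator is $g=1$, so $I_H=\R$. By convention $\mathcal I=\Omega=\emptyset$, so $A_g=\emptyset$, and $\PP(g)=\emptyset$, giving $\mathfrak D(H)=\PP$ by Theorem~\ref{thm:main}. This is case (i).

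Conversely, suppose $D\ge1$, so $I_H\ne\R$ (the normalized generator is unique and nonconstant). I claim $A_g\ne\emptyset$. Since $\mathcal I\neq\emptyset$, the identity element of $G$ fixes every $i\in\mathcal I$, so $1\in A_g$. Hence $A_g=\emptyset$ holds exactly when $I_H=\R$, which makes (i) mutually exclusive with (ii) and (iii). Moreover, (ii) and (iii) are distinguished by whether $A_g=G$, so exactly one of the three cases holds.

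Case (ii): $A_g=G$. For every $p\notin\mathcal S(g)$, any Frobenius lies in $A_g$, so $p\in\PP(g)$ by \eqref{eq:star}. Hence $\mathfrak D(H)=\PP\setminus\PP(g)\subseteq\mathcal S(g)$, which is finite.

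Case (iii): $\emptyset\ne A_g\ne G$. By Chebotarev, each conjugacy class $C\subseteq G$ is the Frobenius class of infinitely many unramified primes, with density $|C|/|G|>0$. Pick $\sigma\in A_g$ and $\tau\notin A_g$. Infinitely many $p\notin\mathcal S(g)$ have Frobenius conjugate to $\sigma$, and these lie in $\PP(g)$, so $\mathfrak D(H)$ is not cofinite. Infinitely many $p\notin\mathcal S(g)$ have Frobenius conjugate to $\tau$, and these lie in $\mathfrak D(H)$, so $\mathfrak D(H)$ is not finite. The same counting yields the bounds $\frac1{|G|}\le d(H)=1-|A_g|/|G|\le1-\frac1{|G|}$ in Corollary~C, because $1\le|A_g|\le|G|-1$.

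The only delicate step is the identification $A_g=\emptyset\iff I_H=\R$, i.e.\ checking that $1\in A_g$ whenever $D\ge1$. The remaining steps are direct consequences of \eqref{eq:star} and Chebotarev.
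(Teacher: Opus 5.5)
Your proof is correct and follows the paper's argument. You split off $D=0$ (where $g=1$, $I_H=\R$, $A_g=\emptyset$), use $1\in A_g$ for $D\ge1$ to separate case (i), and read (ii) and (iii) off Theorem~\ref{thm:torus} and Chebotarev; the only cosmetic difference is that in (iii) you apply Chebotarev to one conjugacy class inside $A_g$ and one outside, whereas the paper deduces ``neither finite nor cofinite'' from $0<d(H)=1-|A_g|/|G|<1$, and the two are equivalent.
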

\begin{proof}
If $D=0$ then $g=1$, so $\PP(g)=\emptyset$, $I_H=\R$ and
$A_g=\emptyset$; by Theorem~\ref{thm:main} we have
$\mathfrak D(H)=\PP$, which is case \ref{tri:all}.

Now $D\ge1$.  Since $\mathcal I\ne\emptyset$ and the identity element of
$G$ fixes every element of every $G$-set, $1\in A_g$.  By
Theorem~\ref{thm:torus}, for every prime $p\notin\mathcal S(g)$ we have
$p\in\PP(g)$ if and only if $\Frob_{\mathfrak p}\in A_g$.  If $A_g=G$,
every such $p$ lies in $\PP(g)$, hence $\mathfrak D(H)\subseteq
\mathcal S(g)$ is finite by Theorem~\ref{thm:main}; this is case
\ref{tri:fin}.  If $A_g\ne G$, then $1\in A_g$ and $A_g\ne G$, so the
density formula above gives $0<d(H)=1-|A_g|/|G|<1$; hence
$\mathfrak D(H)$ is neither finite nor cofinite, which is case \ref{tri:pos}.
\end{proof}

\begin{cor}
\label{cor:decide}
Let $w_1,w_2$ be words in $x_1^{\pm1},x_2^{\pm1}$ such that
$H=\langle w_1,w_2\rangle$ is free of rank two with $A_H=F^{\ab}$.  There is an
algorithm which, given $w_1,w_2$,
\begin{enumerate}[label=(\alph*), ref=(\alph*)]
\item\label{dec:D} computes $\mathfrak D(H)$: it returns a normalized generator
$g$ of $I_H$, the data $L$, $G$, $A_g$ and $\mathcal S(g)$ of
Theorem~\ref{thm:torus}, and the verdicts at the primes of $\mathcal S(g)$,
which together determine $\mathfrak D(H)$ completely;
\item decides which of the cases \ref{tri:all},
\ref{tri:fin}, \ref{tri:pos} of Corollary~\ref{cor:trichotomy} holds, and in
case \ref{tri:fin} outputs the complete finite list of the elements of
$\mathfrak D(H)$;
\end{enumerate}
In particular, $d(H)$ is computable.
\end{cor}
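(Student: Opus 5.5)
The algorithm runs in stages, each using a result already established as an effective computation.

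\emph{Stage 1: compute $g$.} Given $w_1,w_2$, first produce a normalized basis $h_1,h_2$ of $H$. Compute the integer matrix whose rows are $\ab_F(w_1),\ab_F(w_2)$. Since $A_H=F^{\ab}$, this matrix lies in $\mathrm{GL}_2(\Z)$. Reduce it to the identity by elementary row operations and mirror each operation as a Nielsen transformation on the words. This yields $h_1,h_2$ with $\ab_F(h_i)=e_i$, as in Lemma~\ref{lem:normalized-basis}.

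Next compute $g$ from $\overline{[h_1,h_2]}=g\cdot\theta$ via Fox calculus. For $w\in[F,F]$, the class $\overline w$ corresponds to the Fox derivative: one has $\overline{w}=s\cdot\theta$ with $s$ determined by $\partial w/\partial x_2$ read in $\R$, up to the fixed unit convention. One can check this formula on $\theta$ itself, or simply apply the recursion of Proposition~\ref{prop:principal}. Alternatively, write $h_i=x_iv_i$ and compute $s_i$ from $v_i$ by Fox derivatives, so that $g=1+(1-t_1)s_2+(t_2-1)s_1$. Finally normalize $g$ as in \eqref{eq:normalized}.

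\emph{Stage 2: arithmetic data and density.} If $g=1$ we are in case \ref{tri:all} and stop. Otherwise run the construction behind Theorem~\ref{thm:torus}. The sequence of steps is:
\begin{enumerate}[label=(\roman*)]
\item the absolute factorization \eqref{ap:fact} by \cite{Duval91};
\item the set $\Omega$ by Gr\"obner bases (Remark~\ref{rem:omega-calc});
\item the field $L$, its group $G$, and the explicit permutation actions on $\mathcal I$ and on $\Omega$;
\item the integer $N$ via Remark~\ref{rem:Acompute};
\item the finite set $\mathcal S(g)$ from \eqref{ap:Sg}.
\end{enumerate}
From the permutation actions, $A_g$ is obtained by checking each $\sigma\in G$ for a fixed point. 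This gives $d(H)=1-|A_g|/|G|$ as a rational number.

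\emph{Stage 3: verdicts at exceptional primes.} For each $p\in\mathcal S(g)$, decide $p\in\mathfrak D(H)$ by Theorem~\ref{thm:main}. This is a finite check: evaluate $g$ at all $(p-1)^2$ points of $(\F_p^\times)^2$. Outside $\mathcal S(g)$, membership is given by the Frobenius criterion \eqref{eq:star}. For any particular $p$, one can compute $\Frob_{\mathfrak p}$, or just as well evaluate $g$ directly. So $\mathfrak D(H)$ is determined completely by the finite data returned.

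\emph{Stage 4: the trichotomy.} Cases are decided by testing $g=1$ and then $A_g=G$, following Corollary~\ref{cor:trichotomy}. In case \ref{tri:fin}, $\mathfrak D(H)\subseteq\mathcal S(g)$, so the complete list is exactly the set of primes of $\mathcal S(g)$ that passed the Stage 3 check.

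\emph{Main obstacle.} The only delicate point is effectivity of every ingredient of $N$, above all the Ruppert sets $\mathcal A(h_i)$ (Appendix~\ref{app:ruppert}) and the radical certificates (Lemma~\ref{lem:radical}, exponents $c_l$). I rely on these being explicit outputs of Proposition~\ref{prop:red-irr} and of Gr\"obner ideal-membership computations, as asserted in Remark~\ref{rem:Acompute}. Everything else is a finite search.
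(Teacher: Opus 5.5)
Your Stages~2--4 follow the paper's proof essentially verbatim. Stage~1 differs only in inessential ways. The paper does not normalize the basis, since Remark~\ref{rem:g-depends-on-basis} already gives $\overline{[w_1,w_2]}=\pm t_1^at_2^b\,g\cdot\theta$. It also computes this class by Reidemeister--Schreier rewriting rather than Fox calculus. Your Nielsen reduction is correct, just unnecessary.

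One claim in Stage~1 needs correcting. Let $\phi(w)\in\R$ be the image of $\partial w/\partial x_2$ under $x_i\mapsto t_i$. For $v\in[F,F]$ and $y\in F$, the Fox product rule gives $\phi(y^{-1}vy)=\kappa_F(\ab_F(y))^{-1}\phi(v)$. The paper's action, however, is $\kappa_F(\ab_F(y))\cdot\overline v=\overline{y^{-1}vy}$. Hence $\overline w=s\cdot\theta$ corresponds to
\[
\phi(w)=s(t_1^{-1},t_2^{-1})\,t_2^{-1}(1-t_1^{-1}).
\]
So $s$ and $\partial w/\partial x_2$ are related by the involution $t_i\mapsto t_i^{-1}$, not just by a unit. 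Calibrating on $\theta$ alone, where $s=1$, cannot detect this.

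Applied directly to $w=[h_1,h_2]$, the naive reading returns $g(t_1^{-1},t_2^{-1})$. That is harmless for $\mathfrak D(H)$, because inversion permutes $(\F_p^\times)^2$, though (a) asks for a generator of $I_H$ itself. In your hybrid recipe $g=1+(1-t_1)s_2+(t_2-1)s_1$, however, it gives wrong answers. Take the paper's example $H=\langle x_1,\,x_2[x_2,x_1]^{x_1}\rangle$. The naive reading gives $s_2=-t_1^{-1}$ instead of $-t_1$, hence the normalized polynomial $2t_1-1$ instead of $t_1^2-t_1+1$. That would yield $\mathfrak D(H)=\{2\}$ instead of $\{p:\ p\equiv2\bmod3\}$.

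With the involution inserted, or with $\overline v$ computed directly from the paper's definition of the action, Stage~1 is correct and the whole algorithm goes through.
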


\begin{proof}
By Proposition~\ref{prop:principal} and
Remark~\ref{rem:g-depends-on-basis} the element $g\in\R$ determined by
$\ab_{[F,F]}([w_1,w_2])=g\cdot\theta$ generates $I_H$.  It is computable:
Reidemeister--Schreier rewriting of the explicit word $[w_1,w_2]$
with respect to the transversal $\{x_1^ax_2^b\}$ of $[F,F]$ in $F$
\cite{MKS}, followed by abelianization, returns $g$.  By
Remark~\ref{rem:g-depends-on-basis} we may then replace $g$ by a normalized
generator, that is assume \eqref{eq:norm-g}.

 If $D=\deg g=0$ we are in case
\ref{tri:all}, then $\mathfrak D(H)=\PP$ by Corollary~\ref{cor:trichotomy}.  

If $D\ge1$, we compute the finite data
$(L,G,\mathcal I,\Omega,A_g,N,\mathcal S(g))$ of Theorem~\ref{thm:torus}.
By Remark~\ref{rem:omega-calc}, $L$, $\Omega$, and the action of $G$ on
$\mathcal I\cup\Omega$ are computable, so the finite set $A_g$, a union of
the two fixed-point conditions that define it, is computable; and
Remark~\ref{rem:Acompute} gives $N$ explicitly, so
$\mathcal S(g)=\{p\mid N\}\cup\{p\le9D^4\}$ is explicitly enumerated.

Comparing the two finite sets $A_g$ and $G$ now decides between case
\ref{tri:fin} ($A_g=G$) and case \ref{tri:pos} ($A_g\ne G$), and yields
$|A_g|$, so $d(H)=1-|A_g|/|G|$ is computable as the
Corollary asserts.  

For each prime $p\in\mathcal S(g)$ we decide $p\in\PP(g)$, hence
$p\in\mathfrak D(H)$ (Theorem~\ref{thm:main}), by running over the
$(p-1)^2$ points of $(\F_p^\times)^2$ and evaluating $g$ there; these are
the verdicts at the primes of $\mathcal S(g)$.  For $p\notin\mathcal S(g)$
Theorems~\ref{thm:main} and~\ref{thm:torus} give $p\in\mathfrak D(H)$ if
and only if $\Frob_{\mathfrak p}\notin A_g$.  Together these determine
$\mathfrak D(H)$ completely, which is \ref{dec:D}.  In case \ref{tri:fin}
Corollary~\ref{cor:trichotomy} gives $\mathfrak D(H)\subseteq
\mathcal S(g)$, so the verdicts output $\mathfrak D(H)$ in full.
\end{proof}

We close with an example in which the data $(L,G,\mathcal I,\Omega,A_g)$ of
Theorem~\ref{thm:torus} are computed in full.

\begin{ex}
Take
\[
g=(t_1^2-2t_1+t_2+1)^2+(t_2-1)^2 ,
\]
with $g(1,1)=1$, hence normalized, and with $D=4$.  Write
$a=t_1^2-2t_1+t_2+1$ and $b=t_2-1$, so that $g=a^2+b^2$.  Over
$\overline\Q$,
\[
g=h_1h_2,\qquad h_{1}=a+ i\,b,~h_{2}=a- i\,b
\]
two nonassociate absolutely irreducible polynomials of leading coefficient
$1$, so $\mathcal I=\{1,2\}$ and $c=1$.  Neither factor is defined over
$\Q$, and $L=\Q(i)$ with $G=\{1,\sigma\}$, $\sigma$ complex conjugation,
interchanging $h_1$ and $h_2$; hence $\sigma$ fixes no $i\in\mathcal I$.
The two curves $Z_1,Z_2$ meet where $a=b=0$, that is $t_2=1$ and
$t_1^2-2t_1+2=0$, so
\[
\Omega=\{(1+i,1),\ (1-i,1)\} ,
\]
a pair of torus points interchanged by $\sigma$ and fixed by neither.
Therefore $A_g=\{1\}$, the identity being the only element of $G$ fixing an
index or a point, so in particular $A_g\ne G$.  By
Proposition~\ref{prop:realize} a subgroup $H\le F$ with $A_H=F^{\ab}$
realizes this $g$, with $I_H=(g)$.  The formula displayed before
Corollary~\ref{cor:trichotomy} then gives
$d(H)=1-|A_g|/|G|=1-\tfrac12=\tfrac12$.
\end{ex}

\appendix

\section{Ruppert's criterion under reduction}
\label{app:ruppert}

This appendix supplies the input needed by
Lemma~\ref{lem:model}\ref{mod:fact}: absolute irreducibility of a
bivariate polynomial over a number field survives reduction at all but
finitely many primes.  The statement applies to any subring in which
finitely many prescribed elements are inverted.

\begin{prop}
\label{prop:red-irr}
Let $E$ be a number field and let $h\in E[t_1,t_2]$ be absolutely
irreducible of positive total degree, with leading coefficient $1$, divisible by neither $t_1$ nor
$t_2$.  One can compute a finite subset $\mathcal A(h)\subset E^\times$
with the following property.  Let $A\subseteq E$ be a subring in which
every nonzero coefficient of $h$ and every element of $\mathcal A(h)$ is a
unit, and let $\mathfrak q\subset A$ be a maximal ideal.  Then
$h\in A[t_1,t_2]$, the reduction $\bar h\in(A/\mathfrak q)[t_1,t_2]$ has
the same support as $h$ --- in particular the same bidegree, the same
total degree and the same leading coefficient --- and $\bar h$ is
absolutely irreducible.
\end{prop}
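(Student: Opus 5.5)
The plan is to use Ruppert's criterion. For $h$ of bidegree $(m,n)$ over a field $k$ of characteristic $0$, or of characteristic $p$ large relative to $(m,n)$, $h$ is absolutely reducible if and only if the linear system
\[
\frac{\partial}{\partial t_2}\Bigl(\frac{P}{h}\Bigr)=\frac{\partial}{\partial t_1}\Bigl(\frac{Q}{h}\Bigr)
\]
has a nonzero solution $(P,Q)$ with $\deg P\le(m-1,n)$ and $\deg Q\le(m,n-1)$ in bidegree. (The solution $(h_{t_1},h_{t_2})$ is excluded by a suitable normalization; equivalently, the solution space has dimension $>1$.) Clearing denominators, this becomes $h\,P_{t_2}-h_{t_2}P-h\,Q_{t_1}+h_{t_1}Q=0$. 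That is a homogeneous linear system in the unknown coefficients of $P,Q$, whose matrix $M_h$ has entries that are $\Z$-linear combinations of the coefficients of $h$. Ruppert's theorem then reads as follows: $h$ is absolutely irreducible if and only if $\operatorname{rank}M_h=r_0$, where $r_0$ is the number of unknowns minus $1$. In positive characteristic, Gao's version of the criterion holds provided $p>(2m-1)n$, or a similar explicit bound.

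First, since $h$ is absolutely irreducible over $E$, $\operatorname{rank}M_h=r_0$. I will pick an $r_0\times r_0$ minor $\Delta\in E$ of $M_h$ that is nonzero, which is computable by Gaussian elimination. I then set $\mathcal A(h)=\{\Delta\}\cup\{\text{integers }2,\dots,B\}$, where $B$ is the characteristic bound in the positive-characteristic Ruppert–Gao theorem, for instance $B=(2m-1)n+\deg$-type constant. Inverting these integers forces $\operatorname{char}(A/\mathfrak q)>B$ or $=0$.

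Second, I carry out the reduction. The coefficients of $h$ are units of $A$, so $h\in A[t_1,t_2]$. No nonzero coefficient reduces to $0$, so $\bar h$ has the same support, and hence the same bidegree, total degree and leading coefficient $1$. The construction of $M_h$ commutes with reduction because its entries are integer polynomials in the coefficients of $h$, so $M_{\bar h}=\overline{M_h}$. In particular the minor $\bar\Delta$ is nonzero, since $\Delta$ is a unit. Hence $\operatorname{rank}M_{\bar h}\ge r_0$. The rank is also at most $r_0$, since $(\bar h_{t_1},\bar h_{t_2})$-type solutions always exist; this is not even needed. So $\operatorname{rank}M_{\bar h}=r_0$ with the same $(m,n)$, and the criterion in characteristic $>B$ gives absolute irreducibility of $\bar h$. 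The hypothesis that $t_1,t_2\nmid h$ is used to keep the bidegree exactly $(m,n)$ with nonzero boundary terms, as Ruppert's setup requires.

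The main obstacle is the positive-characteristic form of Ruppert's criterion. In small characteristic, $P/h$ can be a $p$-th power derivative artifact, for example $\partial_{t_i}$ killing $t_i^p$, so extra closed-form solutions appear. I would cite Gao (2003, Lemma 2.4/Theorem 2.1), which states the criterion whenever $p>(2m-1)n$, and choose $B$ accordingly. A fallback is to avoid the characteristic issue altogether by using Noether's irreducibility forms: there are integer polynomials $\Phi_k$ in the coefficients of a degree-$\delta$ form such that absolute irreducibility is equivalent to $\Phi_k\ne0$ for some $k$, valid over every field. I would then take $\mathcal A(h)=\{\Phi_k(h)\}$ for one $k$ with $\Phi_k(h)\ne0$, which works verbatim after reduction.
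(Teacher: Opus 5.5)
Your architecture matches the paper's proof: Ruppert's linear system, a nonvanishing maximal minor $\Delta$ placed in $\mathcal A(h)$, and the fact that $M$ commutes with reduction, so $\bar\Delta\neq0$ and $\operatorname{rank}M_{\bar h}\ge r_0$. The paper takes $\deg_{t_2}s\le m_2-2$, which excludes the trivial solution $(\partial_{t_1}h,\partial_{t_2}h)$, and asks for full column rank. Your bound $\deg_{t_2}Q\le n-1$ with ``rank $=$ unknowns $-1$'' is an equivalent normalization.

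The weak point is how you justify the one implication you use after reduction,
\[
\operatorname{rank}M_{\bar h}\ge r_0\ \Longrightarrow\ \bar h\ \text{is absolutely irreducible}.
\]
You treat this as the characteristic-sensitive half of the criterion and appeal to Gao's theorem with $p>(2m-1)n$. That is backwards. Extra solutions in small characteristic can only lower the rank, so they can never fake a nonvanishing minor. The half that needs characteristic $0$ is ``an extra solution forces a factorization'', and you use it only over $E$. Moreover, Gao's result is formulated for $f$ with $\gcd(f,\partial_{t_1}f)=1$, which you have not arranged for $\bar h$. A repeated factor is exactly one of the ways $\bar h$ could fail to be absolutely irreducible, so as written the cited result does not cover the step it is needed for.

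The fix is to prove this direction directly. It holds in every characteristic, which is how the paper uses Ruppert's Lemma~1 (its Case~II covers repeated factors).

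\begin{itemize}
\item Work over $\overline{A/\mathfrak q}$, where the rank is unchanged. Write $\bar h=c\prod_i f_i^{e_i}$ with distinct irreducible $f_i$.
\item An irreducible polynomial over an algebraically closed field is not a $p$-th power, so $(\partial_{t_1}f_i,\partial_{t_2}f_i)\neq(0,0)$.
\item If there are at least two $f_i$, the pairs $\bigl(\bar h\,\partial_{t_1}f_i/f_i,\ \bar h\,\partial_{t_2}f_i/f_i\bigr)$ come from $\log f_i$ and satisfy your degree bounds. They are kernel vectors, and partial fractions show they are linearly independent.
\item If $\bar h=cf^k$ with $k\ge2$, the pair $\bigl(\bar h\,\partial_{t_1}f/f^2,\ \bar h\,\partial_{t_2}f/f^2\bigr)$ comes from $1/f$. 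It is a kernel vector independent of the one coming from $\log f$.
\end{itemize}

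Either way the kernel has dimension at least $2$, contradicting $\operatorname{rank}M_{\bar h}\ge r_0$. With this, the integers $2,\dots,B$ are superfluous, though harmless, and $\mathcal A(h)=\{\Delta\}$ already works.

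Two smaller points. The case where $h$ has degree $0$ in one variable should be treated separately, since the criterion is stated for $m,n\ge1$; there $h=t_i-\alpha$ and the claim is trivial. Also, the bidegree survives reduction because the coefficients of $h$ are units, not because $t_1,t_2\nmid h$.
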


\emph{Ruppert's criterion in matrix form.} Ruppert \cite{Ruppert} detects absolute irreducibility of a bivariate
polynomial by linear algebra.  We use the matrix version of
\cite[Section~2]{Ruppert}, which behaves well under reduction.

Write $\deg f=(m_1,m_2)$ for $\deg_{t_1}f=m_1$, $\deg_{t_2}f=m_2$, and let
$\deg f\le(a,b)$ mean $\deg_{t_1}f\le a$, $\deg_{t_2}f\le b$; a negative
bound forces $f=0$.  Let $\mathcal{K}$ be any field and let
$f\in\mathcal{K}[t_1,t_2]$ have bidegree exactly $(m_1,m_2)$, with
$m_1,m_2\ge1$.  Look for pairs $(r,s)\in\mathcal{K}[t_1,t_2]^2$ with
$\deg r\le(m_1-1,m_2)$ and $\deg s\le(m_1,m_2-2)$ satisfying the Ruppert
equation
\begin{equation}
\partial_{t_2}(r/f)=\partial_{t_1}(s/f) .
\label{ap:ruppert}
\end{equation}
Let $w$ be the column vector of the $2m_1m_2+m_2-1$ unknown
coefficients of $r$ and $s$.  Since $f\ne0$, multiplying
\eqref{ap:ruppert} by $f^2$ is reversible and turns it into the identity
\[
f\,\partial_{t_2}r-r\,\partial_{t_2}f-f\,\partial_{t_1}s+s\,\partial_{t_1}f=0 ,
\]
Its coefficients are linear forms in $w$.  From the explicit formulas in
\cite[p.~65]{Ruppert} we get two facts.  These forms are indexed by
the monomials $t_1^kt_2^l$ with $(k,l)\le(2m_1-1,2m_2-2)$, giving
$2m_1(2m_2-1)$ equations.  Moreover each coefficient of each unknown is
an integer multiple of a single coefficient of $f$, of absolute value at most
$\max(m_1,m_2)$.  Thus \eqref{ap:ruppert} is a homogeneous linear system
\[
M(f)\,w=0 ,\qquad
M(f)\in\mathcal{K}^{\,2m_1(2m_2-1)\times(2m_1m_2+m_2-1)} ,
\]
The matrix $M(f)$ is determined by $f$ \emph{together with the declared
bidegree} $(m_1,m_2)$, which fixes the shape of the matrix.

Two consequences of this shape will be used repeatedly.  The matrix $M$
is $\Z$-linear in the coefficients of $f$: $M(\lambda f)=\lambda M(f)$ for
scalars $\lambda$, and $\varphi\bigl(M(f)\bigr)=M(f^{\varphi})$ entrywise
for a ring homomorphism $\varphi$ of coefficients, since
$\varphi(cx)=c\varphi(x)$ for $c\in\Z$.  Both sides use the same declared
bidegree $(m_1,m_2)$.  Moreover $\mathrm{rank}\,M(f)$ does not change when
$\mathcal{K}$ is enlarged: the rank is the largest $r$ admitting a
nonvanishing $r\times r$ minor, and a single element of $\mathcal{K}$ is
zero exactly when it is zero in any extension field of $\mathcal{K}$.

\begin{lem}[Ruppert]
\label{lem:ruppert}
Let $\mathcal{K}$ be a field and let $f\in\mathcal{K}[t_1,t_2]$ have
bidegree $(m_1,m_2)$ with $m_1,m_2\ge1$.
\begin{enumerate}[label=(\roman*), ref=(\roman*)]
\item\label{rup:free} If $\mathrm{rank}\,M(f)=2m_1m_2+m_2-1$, then $f$ is
absolutely irreducible.  No hypothesis is made on
$\mathrm{char}\,\mathcal{K}$.
\item\label{rup:zero} If $\mathrm{char}\,\mathcal{K}=0$ and $f$ is
absolutely irreducible, then $\mathrm{rank}\,M(f)=2m_1m_2+m_2-1$.
\end{enumerate}
\end{lem}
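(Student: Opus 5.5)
The approach is the classical one behind Ruppert's theorem: nonzero solutions of \eqref{ap:ruppert} correspond to closed logarithmic differential forms $\omega=(r\,dt_1+s\,dt_2)/f$. To make this work in arbitrary characteristic I would use the reversed implication. If $f$ factors over $\overline{\mathcal K}$, I would write down a nonzero solution explicitly. Since the rank does not change under field extension, it suffices to work over $\overline{\mathcal K}$.

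For \ref{rup:free} I would argue by contraposition. Suppose $f=f_1f_2$ with $f_1,f_2$ nonconstant over $\overline{\mathcal K}$. The natural candidate is the form $\omega=df_1/f_1$ minus a suitable multiple of $df/f$, which is exact in the logarithmic sense and hence closed: $\partial_{t_2}(r/f)=\partial_{t_1}(s/f)$ with $r=a\,f_2\partial_{t_1}f_1-b\,f_1\partial_{t_1}f_2$ and $s=a\,f_2\partial_{t_2}f_1-b\,f_1\partial_{t_2}f_2$, where $a,b$ are constants. This pair satisfies the degree bound $\deg r\le(m_1-1,m_2)$ automatically. The condition $\deg_{t_2}s\le m_2-2$ requires killing the top $t_2$-coefficient. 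Writing $f_k$ with $t_2$-degree $n_k$, the top coefficient of $s$ involves $(a n_1-b n_2)\,\mathrm{lc}(f_1)\mathrm{lc}(f_2)$, so one chooses $(a,b)=(n_2,n_1)$.

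The obstacle is characteristic $p$, where derivatives can vanish ($f_1$ a $p$-th power in $t_1$, say) or $(n_2,n_1)\equiv0$ and $r=s=0$. I would handle this as follows.
\begin{itemize}
\item First choose the factorization so that $f_1$ is irreducible and $f_2=f/f_1$.
\item If $\partial_{t_1}f_1=\partial_{t_2}f_1=0$, then $f_1$ is a $p$-th power, contradicting irreducibility over a perfect field; so $df_1\neq0$.
\item Nonvanishing of $\omega$ then amounts to showing $n_2\,df_1/f_1\neq n_1\,df_2/f_2$. Poles along $f_1$ distinguish the two sides unless $f_1\mid f_2$ or $p\mid n_2$.
\item For the bad cases I would instead use the degree in $t_1$ and the mixed choices.
\end{itemize}
This case analysis in positive characteristic is where I expect the real work; I would try to follow Ruppert's Section~2 proof and check that it uses only polar parts along the irreducible $f_1$.

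For \ref{rup:zero}, assume $f$ is absolutely irreducible in characteristic $0$ and take a nonzero solution $(r,s)$ over $\overline{\mathcal K}$. Closed rational forms with poles only along the curve $f=0$ satisfy $\omega=c\,df/f+d\eta$ (partial fractions and residues, using char $0$ and irreducibility). Here $\eta=q/f^{k}$ with $q$ a polynomial. Comparing pole orders forces $\eta$ to be a polynomial, or zero. Then the bidegree constraints, notably $\deg_{t_2}s\le m_2-2<m_2-1=\deg_{t_2}\partial_{t_2}f$, force $c=0$ and then $\eta$ constant. Hence $\omega=0$, so the solution space is trivial and $M(f)$ has full column rank. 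The residue/decomposition step is standard; the degree bookkeeping is routine.
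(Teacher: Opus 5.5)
Your reduction to $\overline{\mathcal K}$ via invariance of the rank is what the paper does. The paper then simply cites Ruppert's Lemma~1 for (i), which holds in every characteristic and whose Case~II handles repeated factors, and Lemma~2 for (ii).

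\textbf{Part (i) has a genuine gap, and it is not a characteristic-$p$ issue.} Take $f=c\,f_1^{k}$ with $f_1$ irreducible and $k\ge2$. Such an $f$ is not absolutely irreducible, so it must be covered. Every logarithmic form built from factors of $f$ is then a multiple $\lambda\,df_1/f_1$, because $df_2/f_2=(k-1)\,df_1/f_1$. This gives $s=\lambda\,(f/f_1)\,\partial_{t_2}f_1$, whose coefficient of $t_2^{m_2-1}$ is $\lambda n_1$ times the leading $t_2$-coefficient of $f$. Whenever $n_1\neq0$ in $\mathcal K$ (always in characteristic $0$, since $m_2=kn_1\ge1$), the bound $\deg_{t_2}s\le m_2-2$ forces $\lambda=0$. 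Your choice $(a,b)=(n_2,n_1)=((k-1)n_1,n_1)$ indeed gives $\omega=0$. So no choice of $(a,b)$, ``mixed'' or otherwise, rescues the logarithmic family. The remedy is also not a finer analysis of simple polar parts along $f_1$, which is how you describe the fallback to Ruppert.

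\textbf{The missing idea is an exact, non-logarithmic form (Ruppert's Case~II).} If $\pi^2\mid f$ with $\pi$ irreducible, take $\omega=d(1/\pi)$, i.e.
\[
r=-\frac{f}{\pi^{2}}\,\partial_{t_1}\pi,\qquad s=-\frac{f}{\pi^{2}}\,\partial_{t_2}\pi .
\]
This form is closed and satisfies $\deg r\le(m_1-1,m_2)$. It also satisfies $\deg_{t_2}s\le m_2-\deg_{t_2}\pi-1\le m_2-2$, with $s=0$ if $\deg_{t_2}\pi=0$. It is nonzero because $d\pi\neq0$, by your own $p$-th power remark.

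\textbf{The squarefree case then needs no case analysis.} Write $f=f_1f_2$ with $f_1,f_2$ coprime. Take any $(a,b)\neq(0,0)$ with $an_1=bn_2$ in $\mathcal K$, e.g.\ $(n_2,n_1)$, or $(1,0)$ if both vanish in $\mathcal K$. For every irreducible $\pi$, $\pi\mid\partial_{t_i}\pi$ forces $\partial_{t_i}\pi=0$ by $t_i$-degree, so $d\pi\not\equiv0\pmod\pi$. Hence if $a\neq0$, the form $a\,df_1/f_1$ has a genuine simple pole along any irreducible factor of $f_1$, where $df_2/f_2$ is regular; the case $b\neq0$ is symmetric. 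So $\omega\neq0$, and (i) follows in every characteristic.

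\textbf{Part (ii).} Your route is sound, and your degree bookkeeping after the decomposition is correct. However, the decomposition $\omega=c\,df/f+d\eta$ is the whole content of Ruppert's Lemma~2, not a routine step. To make it self-contained:
\begin{enumerate}
\item Expand $s/f$ in partial fractions in $t_2$ over an algebraic closure of $\mathcal K(t_1)$.
\item Use closedness to see that the residues are constants.
\item Use irreducibility of $f$ (transitivity of the Galois action on its roots) to see that the residues are all equal.
\end{enumerate}
This gives $s=c\,\partial_{t_2}f$ directly, and the degree bound then forces $c=0$.
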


\begin{proof}
Both assertions are the two criteria in \cite[p.~65]{Ruppert},
stated there over an algebraically closed field.  They extend to any
$\mathcal{K}$.  Indeed $M(f)$ has the same rank over $\mathcal{K}$ as over
$\overline{\mathcal{K}}$, and $f$ keeps its bidegree, so $M(f)$ is the same
matrix in $\overline{\mathcal{K}}[t_1,t_2]$.  There, a nonzero solution
$(r,s)$ of \eqref{ap:ruppert} is just a nonzero kernel vector of $M(f)$.
We argue over $\overline{\mathcal{K}}$.  In that field a polynomial of
bidegree $(m_1,m_2)$ with $m_1,m_2\ge1$ is neither zero nor a unit, so
``reducible'' in \cite{Ruppert} means ``not absolutely irreducible''.

Lemma~1 of \cite{Ruppert}, valid in every characteristic, turns a
factorization of $f$ over $\overline{\mathcal{K}}$ into a nonzero solution
of \eqref{ap:ruppert} (its Case~II covers repeated factors), hence into a
rank deficiency of $M(f)$.  So full column rank makes $f$ irreducible in
$\overline{\mathcal{K}}[t_1,t_2]$, which is \ref{rup:free}.
Lemma~2 of \cite{Ruppert} says that over an algebraically closed field of
characteristic $0$, a nonzero solution of \eqref{ap:ruppert} makes $f$
reducible.  Its hypothesis $m_2\ge1$ is met.  Hence, over characteristic
$0$, an absolutely irreducible $f$ has no nonzero solution, so $M(f)$ has
full column rank; this is \ref{rup:zero}.
\end{proof}

\begin{proof}[Proof of Proposition~\ref{prop:red-irr}]
Suppose $h$ has bidegree $(m_1,m_2)$ with $m_1,m_2\ge1$.  Choose
$d\in\cO_E\setminus\{0\}$ with $dh\in\cO_E[t_1,t_2]$.  As $h$ is absolutely
irreducible and $\mathrm{char}\,E=0$, Lemma~\ref{lem:ruppert}\ref{rup:zero}
makes $M(h)$ of full column rank; so is $M(dh)=d\,M(h)$, a matrix over
$\cO_E$.  Pick a nonvanishing maximal minor $\Delta$ of $M(dh)$ and set
$\mathcal A(h)=\{d,\Delta\}$.

Now let $A$ and $\mathfrak q$ be as in the statement.  The coefficients of
$h$ lie in $A$, and the nonzero ones are units, so their reductions are
nonzero and $\bar h$ has the same support as $h$.  In particular the
declared bidegree $(m_1,m_2)$ is unchanged.  The entries of $M(dh)$ are
integer multiples of coefficients of $dh$, hence lie in $A$, and
$\Z$-linearity of $M$ gives $\overline{M(dh)}=M(\bar d\bar h)=\bar d\,M(\bar h)$
over $A/\mathfrak q$.  The left side keeps the nonvanishing minor
$\bar\Delta$, so $M(\bar h)$ has full column rank; indeed it is $\bar d^{-1}$
times the left side.  Lemma~\ref{lem:ruppert}\ref{rup:free} needs no
hypothesis on the characteristic, so $\bar h$ is absolutely irreducible.

If instead $h$ has degree $0$ in one variable, then $h$ is an irreducible
univariate polynomial over $\overline\Q$, hence linear.  Not being a
monomial, it is $h=t_1-\alpha$ or $h=t_2-\alpha$ with $\alpha\in E^\times$.
Set $\mathcal A(h)=\{\alpha\}$.  As above, $\bar\alpha\ne0$, so $\bar h$ has
the same support as $h$, and a linear polynomial is absolutely irreducible.

Both branches are constructive: the factorization deciding which branch
applies, the denominator $d$, and a nonvanishing maximal minor $\Delta$ of
the explicit matrix $M(dh)$ are all computable from the coefficients of $h$
\cite{GG}.
\end{proof}

\section{Proof of Lemma~\ref{lem:radical}}
\label{app:radical}

We prove the two equalities
$\mathcal{J}_{ij}=\bigcap_{z\in\Omega_{ij}}\mathfrak m_z=\sqrt{\mathfrak L_{ij}}$ of
Lemma~\ref{lem:radical}.

\begin{proof}[Proof of Lemma~\ref{lem:radical}]
Distinct $z,w\in\Omega_{ij}$ differ in some
coordinate, so $\mathfrak m_z+\mathfrak m_w=L[t_1,t_2,u]$; the ideals
$\mathfrak m_z$ are hence pairwise comaximal, and by the Chinese remainder
theorem their product equals their intersection.  Thus
$\mathcal J_{ij}=\bigcap_{z\in\Omega_{ij}}\mathfrak m_z$.

It remains to identify $\sqrt{\mathfrak L_{ij}}$.  Let
$\Lambda=L[t_1,t_2,u]/\mathfrak L_{ij}$.  The
$h_i,h_j$ are nonassociate irreducibles of the unique factorization domain
$\overline\Q[t_1,t_2]$, hence coprime; so $(h_i,h_j)$ is zero-dimensional
and $\Lambda_0=L[t_1,t_2]/(h_i,h_j)$ is a finite-dimensional $L$-algebra.  The relation $t_1t_2u=1$ makes
$\Lambda=\Lambda_0[1/t_1t_2]$ a localization of $\Lambda_0$, hence again
Noetherian of dimension $0$. In dimension $0$ every prime is maximal, so the nilradical $\sqrt{0_\Lambda}$ of $\Lambda$ is the intersection of the maximal ideals.

These maximal ideals are exactly the ones accounting for $\Omega_{ij}$.  If
$\mathfrak M\subset\Lambda$ is maximal, then $L'=\Lambda/\mathfrak M$ is a
finite extension of $L$.  Any $L$-embedding $L'\hookrightarrow\overline\Q$
sends the images of $t_1,t_2,u$ to a point $\zeta\in Z(\mathfrak L_{ij})$
with $L'=L(\zeta)$.  Every point of $Z(\mathfrak L_{ij})$ projects through
$\pr$ onto a point of $Z(h_i,h_j)\cap(\overline\Q^\times)^2=\Omega_{ij}$,
with $u$ then equal to $(t_1t_2)^{-1}$; as $\Omega_{ij}\subset(L^\times)^2$
by the construction of $L$, all coordinates of $\zeta$ lie in $L$.  Thus
$L'=L$, and $\mathfrak M$ is the image of $\mathfrak m_z$ for the
corresponding $z\in\Omega_{ij}$; conversely each $\mathfrak m_z$ contains
$\mathfrak L_{ij}$ and maps to the maximal ideal of $\Lambda$ at $z$.  Pulling
back the description of $\sqrt{0_\Lambda}$ along
$L[t_1,t_2,u]\to\Lambda$ therefore yields the second equality
$\sqrt{\mathfrak L_{ij}}=\bigcap_{z\in\Omega_{ij}}\mathfrak m_z$.

When $\Omega_{ij}=\emptyset$ the same identity holds with the empty
intersection: the analysis above leaves $\Lambda$ with no maximal ideal, so
$\Lambda=0$ and $\sqrt{\mathfrak L_{ij}}=(1)$; likewise $\mathcal J_{ij}=(1)$
by convention, so both sides of the equality are the unit ideal.

\end{proof}

\end{document}